\definecolor {darkblue} {rgb} {0,0,0.4} 
\definecolor {dg} {rgb} {0,0.392,0} 
\theoremstyle{definition} 
\newtheorem{definition}{Definition}[section] 
\theoremstyle{plain} 
\newtheorem{theorem}[definition]{Theorem}
\newtheorem{lemma}[definition]{Lemma}
\newtheorem{corollary}[definition]{Corollary}
\newtheorem{remark}[definition]{Remark}
\newcommand{\R}{\mathbb{R}} 
\newcommand{\Z}{\mathbb{Z}} 
\newcommand{\ie}{{\it{i.e.}, }} 
\newcommand{\vareps}{\varepsilon} 
\newcommand{\del}{\partial} 
\newcommand{\<}{\langle} 
\renewcommand{\>}{\rangle} 
\renewcommand{\O}{\mathcal{O}} 
\newcommand{\tb}{\textbf} 
\newcommand{\tr}{\textnormal}
\renewcommand{\u}{\mathbf{u}} 
\newcommand{\os}{\overset{\circ}}
\newcommand{\od}{\overset{\bullet}} 
\newcommand{\D}{\mathcal{D}}  				
\newcommand{\B}{\mathcal{B}} 				
\newcommand{\F}{\mathcal{F}} 				
\newcommand{\bond}{\mathrm{Bo}} 				
\newcommand{\n}{\mathbf{\hat{n}}}  				
\renewcommand{\H}{\mathbf{H}} 				
\newcommand{\X}{\mathbf{X}} 					
\newcommand{\M}{\mathcal{M}}  				
\newcommand{\Lb}{\mathbf{L}}					
\newcommand{\Mb}{\mathbf{M}} 				
\newcommand{\1}{\boldsymbol{1}} 				
\newcommand{\0}{\boldsymbol{0}} 		 		
\newcommand{\C}{\mathcal{C}} 				
\newcommand{\RR}{\mathcal{R}} 				
\DeclareMathOperator{\PC}{PC}
\begin{document}
\title{An Isoperimetric Sloshing Problem in a \\ Shallow Container with Surface Tension} 
\thanks{The work of C.H. Tan and B. Osting was partially funded by NSF DMS 17-52202} 
\author{Chee Han Tan}
\address{Department of Mathematics, Wake Forest University, Winston-Salem, NC} 
\email{tanch@wfu.edu} 

\author{Christel Hohenegger} 
\address{Department of Mathematics, University of Utah, Salt Lake City, UT} 
\email{choheneg@math.utah.edu} 

\author{Braxton Osting} 
\address{Department of Mathematics, University of Utah, Salt Lake City, UT} 
\email{osting@math.utah.edu} 

\keywords{Isoperimetric inequality, fluid sloshing, surface tension, shallow container, pinned contact line, calculus of variations} 
\subjclass[2010]{49R05, 76M30, 76B45} 

\date{\today}

\begin{abstract} 
In 1965, B. A. Troesch solved the isoperimetric sloshing problem of determining the container shape that maximizes the fundamental sloshing frequency among two classes of shallow containers: symmetric canals with a given free surface width and cross-sectional area, and radially symmetric containers with a given rim radius and volume \href{https://doi.org/10.1002/cpa.3160180124}{[doi:10.1002/cpa.3160180124]}. Here, we extend these results in two ways: (i) we consider surface tension effects on the fluid free surface, assuming a flat equilibrium free surface together with a pinned contact line, and (ii) we consider sinusoidal waves traveling along the canal with wavenumber $\alpha\ge 0$ and spatial period $2\pi/\alpha$; two-dimensional sloshing corresponds to the case $\alpha = 0$. Generalizing our recent variational characterization of fluid sloshing with surface tension to the case of a pinned contact line, we derive the pinned-edge linear shallow sloshing problem, which is an eigenvalue problem for a generalized Sturm-Liouville system. In the case without surface tension, we show that the optimal shallow canal is a rectangular canal for any $\alpha > 0$. In the presence of surface tension, we solve for the maximizing cross-section explicitly for shallow canals with any given $\alpha\ge 0$ and shallow radially symmetric containers with $m$ azimuthal nodal lines, $m = 0, 1$. Our results reveal that the squared maximal sloshing frequency increases considerably as surface tension increases. Interestingly, both the optimal shallow canal for $\alpha = 0$ and the optimal shallow radially symmetric container are not convex. As a consequence of our explicit solutions, we establish convergence of the maximizing cross-sections, as surface tension vanishes, to the maximizing cross-sections without surface tension. 
\end{abstract} 

\maketitle


\section{Introduction}
Sloshing dynamics refers to the study of the motion of a liquid free surface (\ie the interface between the liquid in the container and the air above) inside partially filled containers or tanks \cite{Faltinsen,Ibrahim}. Liquid sloshing has attracted considerable attention from engineers, scientists, and mathematicians. It is an inevitable phenomenon in many engineering applications, causing detrimental impacts on the dynamics and stability of marine, road, rail, and space transportation systems. For trucks and trains transporting oil and hazardous material, liquid sloshing can affect vehicle dynamics during braking maneuvers and curve negotiation, which could reduce the braking efficiency and increase the risk of vehicle rollover \cite{Vera:2005simulation,Kolaei:2014effects,Otremba:2018modelling}. For liquid propellant spacecraft, violent fuel sloshing produces highly localized pressure on tank walls, leading to deviation from its planned flight path or compromising its structural integrity.  

It is of practical interest to predict the natural sloshing frequencies of the liquid in partially filled containers of arbitrary shape, since large amplitude sloshing tends to occur in the vicinity of resonance, \ie when the external excitation (forcing) frequency of the container is close to one of these natural sloshing frequencies. Knowing these natural frequencies is therefore essential in the analysis and design of liquid containers. In this case, it suffices to consider the linear sloshing problem since the details of the fluid motion are not required in determining the natural frequencies \cite{mciver:1993trough}. Except for very few simple geometries (such as upright cylindrical and rectangular containers) with a flat free surface, where the linear problem has closed-form solutions \cite{Ibrahim,Lamb}, computing the natural sloshing frequencies of a liquid in arbitrarily-shaped containers remains an intricate task but can be treated using a combination of analytical and numerical techniques. Some of the well-known approaches include: 
(1) variational formulations \cite{lawrence:1958,moiseev:1964,moiseev:1966,reynolds:1966,Myshkis,kuznetsov:1990,li:2014,tan:2017}, 
(2) integral equation/conformal mapping \cite{budiansky:1960,chu:1964,fox:1983,fontelos:2020}, 
(3) special coordinate systems \cite{mciver:1989sloshing,mciver:1993trough,shankar:2003}, and 
(4) the series expansion method \cite{evans:1993,shankar:2003,shankar:2007}. 
In this paper, we use a variational approach to study the linear sloshing problem with pinned-end edge constraint.


\subsection{Pinned-edge linear sloshing problem} 
We begin by describing free oscillations of an incompressible, inviscid fluid in a three-dimensional rigid container; a physical derivation of the linear sloshing problem can be found in, {\it e.g.}, \cite[Appendix A]{tan:2017}. Let $\ell_c$ denote a characteristic length scale of the container and $g$ the gravitational acceleration. We nondimensionalize all lengths by $\ell_c$, time by $t_c\coloneqq\sqrt{\ell_c/g}$, and velocity by $\ell_c/t_c$. The fluid occupies a bounded, simply-connected moving domain $\D(t)\subset\R^3$ that is bounded above by the fluid free surface $\F(t)$ and below by piecewise smooth wetted container walls $\B(t)\coloneqq\del\D(t)\setminus\overline{\F(t)}$. Let $(x, y, z)$ be dimensionless Cartesian coordinates such that the  $z$-axis is directed vertically upward. We make the following assumptions:
\begin{enumerate}
\item The fluid flow is \emph{irrotational}. This implies the existence of a \emph{velocity potential} $\phi(x, y, z, t)$ whose gradient is the fluid velocity field, $\u = \nabla\phi$. 
\item The fluid is acted upon by the gravitational force in the bulk and \emph{capillary (surface tension) forces} on the free surface $\F(t)$. The \emph{equilibrium free surface $\F$ is flat} and lies in the plane $z = 0$. The moving free surface can be described by the graph of a function $\eta(x, y, t)$: $\F(t) = \{(x, y, z)\in\R^3\colon z = \eta(x, y, t)\}$. 
\item The \emph{contact line} $\del\F(t)$, \ie the curve at the intersection of the free surface and the container wall, is \emph{fixed at all time}. This translates to $\del_t\eta = 0$ on $\del\F(t)\equiv\del\F$ and is known as the \emph{pinned-end edge constraint}. This was first suggested by Benjamin and Scott \cite{benjamin:1979gravity} to avoid certain discrepancies in the experimental measurements of wave propagation of clean water in a channel. It was subsequently investigated by several authors \cite{graham:1983new,benjamin:1985long,shen:1983nonlinear,henderson:1994,groves:1995theoretical,nicolas:2005effects,shankar:2007,kidambi:2009meniscus}. The pinned contact line has been observed in small amplitude sloshing, and this effect is enhanced on a brimful container or if the fluid exhibits strong surface tension \cite{benjamin:1979gravity,bauer:1992liquid}. 
\item The fluid undergoes \emph{small amplitude oscillations}, allowing for  the linearization of the governing equations around the equilibrium solution $(\phi_0, \eta_0) = (\1, \0)$, \ie write 
$(\phi, \eta) = (\1, \0) + \vareps (\hat\phi,\hat\eta )  + \O(\vareps^2)$, for some ordering parameter $\vareps\ll 1$. 
\end{enumerate} 
This last assumption crucially permits the transformation of the nonlinear fluid problem on the moving domain $\D(t)$ to a linear one (the $\O(\vareps)$ equations) on the fixed, equilibrium domain $\D$. Finally, we seek time-harmonic solutions to the linear problem with natural sloshing frequency $\omega$ and natural sloshing modes $(\Phi, \xi)$, \ie  $\hat\phi(x, y, z, t) = \Phi(x, y, z)\cos(\omega t)$ and $\hat\eta(x, y, t) = \xi(x, y)\sin(\omega t)$. The $\O(\vareps)$ equations (with the time-harmonic factor canceled) are a dimensionless linear boundary spectral problem for $(\omega, \Phi, \xi)$, which we refer to as the \emph{pinned-edge linear sloshing problem}: 
 \begin{subequations} \label{eq:SloshST} 
\begin{alignat}{3}
\label{eq:SloshST1} \Delta\Phi & = 0 && \ \ \tr{ in } \ \ && \D, \\ 
\label{eq:SloshST2} \del_\n\Phi & = 0 && \ \ \tr{ on } \ \ && \B, \\ 
\label{eq:SloshST3} \del_z\Phi & = \omega\xi && \ \ \tr{ on } \ \ && \F, \\ 
\label{eq:SloshST4} \xi - \frac{1}{\bond}\Delta_\F\xi & = \omega\Phi && \ \ \tr{ on } \ \ && \F, \\ 
\label{eq:SloshST5} \xi & = 0 && \ \ \tr{ on } \ \ && \del\F. 
\end{alignat}
\end{subequations}
Here, $\n$ is the outward unit normal vector and $\Delta_\F\xi\coloneqq\del_{xx}\xi + \del_{yy}\xi$ is twice the linearized mean-curvature operator. The dimensionless number $\bond = \rho g\ell_c^2/\sigma$ (with $\rho > 0$ the constant fluid density and $\sigma > 0$ the surface tension coefficient along the free surface) is known as the Bond number and measures the relative magnitudes of gravitational and capillary forces. The mass of the fluid is conserved for any $\omega$. Indeed, a trivial eigenpair of \eqref{eq:SloshST} is given by $(\omega, \Phi, \xi) = (0, \1, \0)$, and for $\omega\neq 0$ mass conservation $\int_\F \xi\, dA = 0$ follows from the divergence theorem.


\subsection{Isoperimetric sloshing problem} 
In the absence of surface tension (\ie $\bond = \infty$), B. A. Troesch \cite{troesch:1965} studied the \emph{isoperimetric sloshing problem} of determining universal upper bounds for sloshing frequencies for the following two families of \emph{shallow symmetric containers}: 
\begin{enumerate}
\item Canals (uniform horizontal channels of arbitrary cross-section) with given free surface width and cross-sectional area; see Figure~\ref{fig:container}(left).  
\item Radially symmetric containers with given rim radius and volume; see Figure~\ref{fig:container}(right). 
\end{enumerate} 
The term \emph{shallow} refers to the assumption that the fluid depth, $h$, is sufficiently small compared to the wavelength of the free oscillation. Exploiting translational symmetry along the canal length for canals and rotational symmetry for radially symmetric containers and then applying the shallow water theory, the three-dimensional sloshing problem in the fluid domain reduces to a one-dimensional problem on the fluid free surface. The isoperimetric sloshing problem becomes a one-dimensional eigenvalue optimization problem with an area or volume constraint, in the sense that the free surface is fixed and only the wetted bottom of the container is allowed to vary. 

\begin{figure}[t] 
\begin{center} 
	\includegraphics[width = 0.645\textwidth]{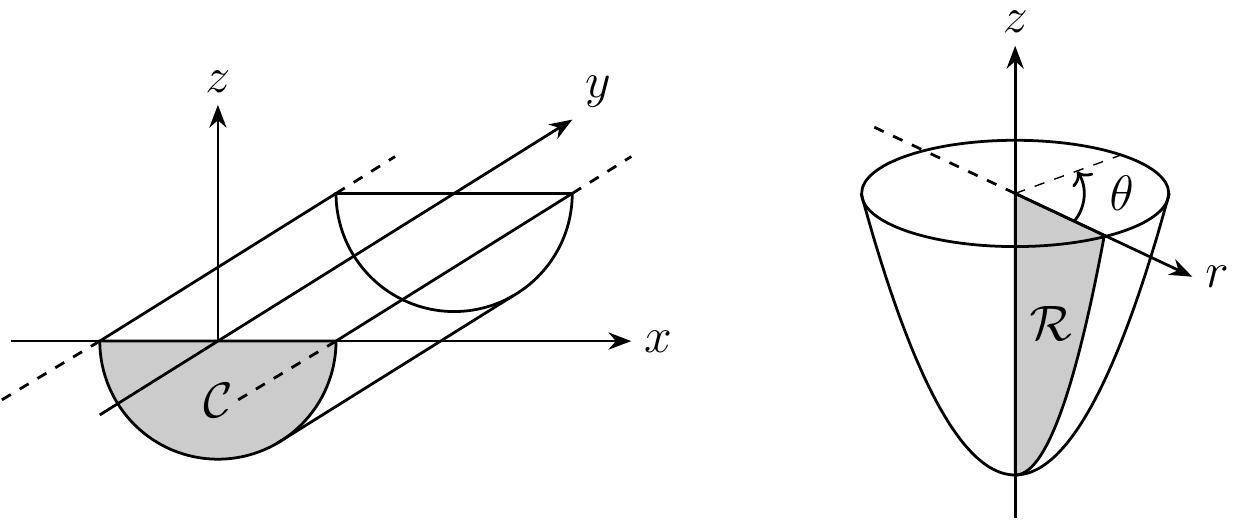} 
\caption{\tb{(Left)} A canal generated by a vertical cross-section $\C$ with container shape $z = -h(x)$. \tb{(Right)} A radially symmetric container obtained as a rotation of a planar meridian domain $\RR$ with container shape $z = -h(r)$.} 
\label{fig:container} 
\end{center} 
\end{figure} 

In the absence of surface tension, Troesch derived the first-order optimality condition using a variational argument, which says that the velocity potential must be linear for any optimal container. Combining this with the extremal property of the sloshing frequencies, Troesch showed that solving the isoperimetric sloshing problem is equivalent to solving a certain first-order singular ODEs with the area or volume constraint. We now state Troesch's result. Let $\lambda_{1, \bond = \infty}$ be the square of the dimensionless fundamental (smallest positive) sloshing frequency without surface tension for a shallow container. For shallow canals with a dimensionless cross-sectional area $A$, we have 
\[ \lambda_{1, \bond = \infty}(h)\le 3A/2 \] 
for all depth functions $h$ within a particular class and the container that saturates this inequality is a parabola; in particular, the maximizing cross-section is convex and has no vertical side walls. For shallow radially symmetric containers with a dimensionless volume $V$, $\lambda_{1, \bond = \infty}$ (corresponding to a motion with one nodal diameter $m = 1$ on the free surface) satisfies 
\[ \lambda_{1, \bond = \infty}(h)\le 4V/\pi \] 
for all depth functions $h$ within a particular class and the container that saturates this inequality is again a parabola. The problem for higher sloshing frequencies is solved numerically and these optimal containers are not connected, in the sense that the container depth vanishes at some point in the interior of the free surface.


\subsection{Summary of main results and outline} 
The goal of this work is twofold: (i) solve the isoperimetric sloshing problem for the fundamental (smallest positive) sloshing frequency, including the effects of surface tension on the free surface and (ii) extend transverse sloshing to longitudinal sloshing along the canal. We now give precise statements of our main results. \\ 

\noindent\tb{Canals.} In the case of canals, we restrict our attention to sinusoidal solutions of the form $\Phi(x, y, z) = \varphi(x, z)\cos(\alpha y)$ and $\xi(x, y) = \zeta(x)\cos(\alpha y)$, where 
the parameter $\alpha\ge 0$ is the wavenumber associated with the longitudinal sloshing mode with spatial period $2\pi/\alpha$. In particular, the case $\alpha = 0$ corresponds to planar sloshing in the vertical $xz$-plane. For a (sufficiently small) fixed cross-sectional area $A > 0$, we introduce the following class of admissible shape functions for shallow canals 
\[ \M_A \coloneqq \left\{h\in \PC^1[-1, 1]\colon \tr{$h \ge 0$ on $[-1, 1]$; $\int_{-1}^1 h\, dx = A$}\right\}, \] 
where $\PC^1[-1, 1]$ denotes the set of all continuous and piecewise continuously differentiable functions on the interval $[-1, 1]$. For every $\alpha\ge 0$, let $\Omega_{\alpha, 1}^\infty$ and $\Omega_{\alpha ,1}$ denote the fundamental sloshing frequency for a shallow canal in the absence ($\bond = \infty$) and presence ($\bond < \infty$) of surface tension, respectively. Define $\lambda_{\alpha, 1}^\infty\coloneqq (\Omega_{\alpha, 1}^\infty)^2$ and $\lambda_{\alpha, 1}\coloneqq \Omega_{\alpha, 1}^2$. Troesch proved that the parabolic cross-section maximizes $\lambda_{0, 1}^\infty$ in $\M_A$, with upper bound $\lambda_{0, 1}^{\infty, *}\coloneqq 3A/2$. Our first theorem extends Troesch's result from $\alpha = 0$ to $\alpha > 0$ in the absence of surface tension. 

\begin{theorem}[$\bond = \infty$, $\alpha > 0$] \label{thm:Iso2D_zero} 
Let $h$ be a shape function in $\M_A$. Then for all $\alpha > 0$, we have 
\begin{equation*} \label{eq:Iso2D_zero_eigs} 
\lambda_{\alpha, 1}^\infty(h)\le \lambda_{\alpha, 1}^{\infty, *}\coloneqq \frac{\alpha^2A}{2}. 
\end{equation*} 
Equality holds for $h = h_\alpha^{\infty, *}\coloneqq A/2$, \ie the maximizing cross-section is a rectangle for any $\alpha > 0$.  
\end{theorem}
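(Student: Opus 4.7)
The plan is to reduce the problem to a one-dimensional Sturm-Liouville eigenvalue problem via the shallow-water approximation, and then to produce the upper bound by a single well-chosen test function in the associated Rayleigh quotient. Under the canal ansatz $\Phi(x,y,z)=\varphi(x,z)\cos(\alpha y)$, $\xi(x,y)=\zeta(x)\cos(\alpha y)$ together with the shallow-water assumption that $\varphi$ is essentially $z$-independent, integrating the Laplace equation \eqref{eq:SloshST1} over the fluid column $z\in(-h(x),0)$ and using \eqref{eq:SloshST2}--\eqref{eq:SloshST3} yields the reduced eigenvalue problem
\[
-(h\varphi')' + \alpha^2 h\,\varphi \;=\; \lambda\,\varphi \quad \text{on } (-1,1), \qquad h\,\varphi' = 0 \ \text{at}\ x = \pm 1,
\]
with the Rayleigh characterization
\[
\lambda_{\alpha,1}^\infty(h) \;=\; \min_{\varphi\not\equiv 0}\frac{\int_{-1}^1 h\bigl[(\varphi')^2 + \alpha^2\varphi^2\bigr]\,dx}{\int_{-1}^1 \varphi^2\,dx}.
\]
A crucial observation is that, in contrast to the $\alpha=0$ case treated by Troesch, the trivial eigenpair $(\omega,\Phi,\xi)=(0,\1,\0)$ of \eqref{eq:SloshST} does not reduce to an eigenpair of this $\alpha>0$ problem, so no orthogonality constraint is imposed on the minimizer.

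The upper bound is then obtained by substituting the constant test function $\varphi\equiv 1$: since $(\varphi')^2\equiv 0$ and $\int_{-1}^1 h\,dx = A$ by the area constraint, the Rayleigh quotient evaluated at $\varphi\equiv 1$ equals exactly $\alpha^2 A/2$, giving
\[
\lambda_{\alpha,1}^\infty(h) \;\le\; \frac{\alpha^2 A}{2} \qquad \text{for every } h\in\M_A.
\]
The strength of the argument is that this bound depends only on the total area, requiring no pointwise information about $h$.

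Sharpness is verified by a direct computation for $h\equiv A/2$: the ODE becomes the constant-coefficient problem $\varphi'' + \bigl(2\lambda/A - \alpha^2\bigr)\varphi = 0$ with $\varphi'(\pm 1) = 0$, whose eigenpairs are
\[
\lambda_n \;=\; \tfrac{A}{2}\Bigl[\alpha^2 + (n\pi/2)^2\Bigr], \qquad \varphi_n(x) \;=\; \cos\bigl(n\pi(x+1)/2\bigr), \qquad n = 0, 1, 2,\ldots,
\]
so the fundamental eigenvalue is $\lambda_0 = \alpha^2 A/2$, attained by $\varphi\equiv 1$. Uniqueness of the optimal shape then comes essentially for free: if the bound were saturated by some $h\in\M_A$, then $\varphi\equiv 1$ would realize the minimum of the Rayleigh quotient, hence be an eigenfunction, and substituting it into the ODE would force $\alpha^2 h(x) = \alpha^2 A/2$, so $h\equiv A/2$.

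The only real subtlety I anticipate is making the Rayleigh characterization rigorous across the entire admissible class $\M_A$, since $h$ is allowed to vanish on subintervals or at isolated points, rendering the Sturm-Liouville problem singular and requiring a weighted Sobolev space formulation. This is a standard technicality and is presumably handled in the derivation of the reduced shallow-canal problem earlier in the paper; the constant function $\varphi\equiv 1$ trivially belongs to any such space, so the test-function argument goes through without modification, and everything else amounts to a one-line integral evaluation and the solution of a constant-coefficient ODE.
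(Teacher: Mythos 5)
Your proof is correct and follows essentially the same approach as the paper: both use the constant test function in the shallow-water Rayleigh quotient to obtain the upper bound $\alpha^2 A/2$, and both verify sharpness by solving the constant-coefficient Sturm--Liouville problem with $h\equiv A/2$. The uniqueness observation at the end is a nice addition not included in the paper's proof, though making it fully rigorous would require addressing the weak formulation when $h$ may vanish.
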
 

Observe that $\alpha\mapsto\lambda_{\alpha, 1}^{\infty, ^*}$ is strictly increasing on $(0, \infty)$ and $\lambda_{\alpha, 1}^{\infty, *}\to 0\neq\lambda_{0, 1}^{\infty, *}$ as $\alpha\to 0^+$. We associate this with the fact that the trivial eigenvalue $\lambda = 0$ exists for the shallow sloshing problem without surface tension \eqref{eq:SloshCanalS_zero} for $\alpha = 0$ but not for $\alpha > 0$. 

The next two theorems generalize Troesch's result for $\alpha = 0$ and Theorem \ref{thm:Iso2D_zero} for $\alpha > 0$ from $\bond = \infty$ to $\bond < \infty$. 

\begin{theorem}[$\bond < \infty$, $\alpha = 0$] \label{thm:Iso2D_0} 
Let $h$ be a shape function in $\M_A$. Then the following inequality holds: 
\begin{equation} \label{eq:Iso2D_0_eigs} 
\lambda_{0, 1}(h)\le \lambda_{0, 1}^*\coloneqq \frac{3A}{2}\left[1 - \frac{3\left(\sqrt{\bond} - \tanh\left(\sqrt{\bond}\, \right)\right)}{\bond\tanh\left(\sqrt{\bond}\, \right)}\right]^{-1}. 
\end{equation} 
Equality holds for $h = h_0^*$ defined by 
\begin{equation} \label{eq:Iso2D_0_shape} 
h_0^*(x) = \frac{\lambda_{0, 1}^*}{2}(1 - x^2) - \frac{\lambda_{0, 1}^*}{\sqrt{\bond}\sinh(\sqrt{\bond}\, )}\left[\cosh(\sqrt{\bond}\, ) - \cosh(\sqrt{\bond}\, x)\right]. 
\end{equation} 
In particular, $z = -h_0^*$ is symmetric but not convex on $[-1, 1]$. 
\end{theorem}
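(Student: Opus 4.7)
My plan is to establish a single-function Rayleigh-type characterization of $\lambda_{0,1}(h)$, apply Cauchy-Schwarz to decouple the maximization over $h$ from the minimization over trial functions, and then identify the optimizer explicitly via the equality case of Cauchy-Schwarz together with a Troesch-style ansatz that forces the velocity potential $\varphi$ to be linear.

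\textbf{Variational formulation.} Starting from the pinned-edge shallow sloshing system (derived earlier in the paper), integrating the kinematic equation once gives $h\varphi' = -\omega\,Z$ where $Z(x) \coloneqq \int_{-1}^x \zeta(s)\,ds$, and multiplying the dynamic condition by $\zeta$ and integrating by parts (using $\zeta(\pm 1)=0$) yields
\[
\lambda_{0,1}(h) \;=\; \min_{\zeta \in \mathcal{V}} \frac{\int_{-1}^1 \zeta^2\,dx \;+\; \bond^{-1}\int_{-1}^1 (\zeta')^2\,dx}{\int_{-1}^1 Z(x)^2/h(x)\,dx}, \qquad \mathcal{V} \coloneqq \left\{\zeta \in H^1_0(-1,1) : \int_{-1}^1 \zeta\,dx = 0\right\},
\]
where the mean-zero constraint expresses mass conservation (equivalent to $Z(1)=0$).

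\textbf{Cauchy-Schwarz and optimal trial function.} The Cauchy-Schwarz inequality $\bigl(\int|Z|\bigr)^2 \leq \int h \cdot \int Z^2/h$ gives, for every $h\in\M_A$ and $\zeta\in\mathcal{V}$,
\[
\lambda_{0,1}(h) \;\leq\; \widetilde R(\zeta) \coloneqq \frac{A\bigl[\int\zeta^2 + \bond^{-1}\int(\zeta')^2\bigr]}{\bigl(\int|Z|\bigr)^2},
\]
with equality (for fixed $\zeta$) iff $h\propto |Z|$. Motivated by Troesch's observation that the bound is saturated precisely when $\varphi$ is linear --- equivalently $\zeta - \bond^{-1}\zeta'' \propto x$ --- I take the odd trial function $\zeta^*(x) = x - \sinh(\sqrt{\bond}\,x)/\sinh(\sqrt{\bond}\,) \in \mathcal{V}$, the unique odd solution of $\zeta^* - \bond^{-1}(\zeta^*)'' = x$ with $\zeta^*(\pm 1)=0$. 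Integration by parts and the ODE give $\int(\zeta^*)^2 + \bond^{-1}\int(\zeta^{*\prime})^2 = \int x\,\zeta^* = -\int Z^*$, and a direct evaluation of $\int_{-1}^1 Z^*$ yields
\[
\int_{-1}^1 Z^*(x)\,dx \;=\; -\tfrac{2}{3}\left[1 - \frac{3(\sqrt{\bond}-\tanh(\sqrt{\bond}\,))}{\bond\tanh(\sqrt{\bond}\,)}\right].
\]
Combined with $Z^* \leq 0$ on $[-1,1]$ (verified below), so $\int|Z^*| = -\int Z^*$, substitution gives $\widetilde R(\zeta^*) = \lambda_{0,1}^*$, proving the upper bound in \eqref{eq:Iso2D_0_eigs}. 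Taking $h_0^* = A|Z^*|/\int|Z^*|$ realizes the Cauchy-Schwarz equality case, and substituting the explicit $Z^*$ reproduces \eqref{eq:Iso2D_0_shape}; one then checks directly that $(\omega^* = \sqrt{\lambda_{0,1}^*},\,\varphi^* = x/\omega^*,\,\zeta^*)$ solves the pinned-edge shallow sloshing system with depth $h_0^*$, and since $\zeta^*$ has exactly one interior zero, it is the fundamental eigenfunction by the Sturm-Liouville structure invoked in Step~1, giving $\lambda_{0,1}(h_0^*) = \lambda_{0,1}^*$.

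\textbf{Admissibility, sign, and non-convexity.} Using the identity $(\cosh u - 1)/\sinh u = \tanh(u/2)$ together with $\tanh(u/2) < u/2$ gives $Z^*(0) < 0$; combined with $Z^{*\prime\prime}(0) = \zeta^{*\prime}(0) = 1 - \sqrt{\bond}/\sinh(\sqrt{\bond}\,) > 0$ and $Z^*(\pm 1)=0$, we conclude $Z^* \leq 0$ on $[-1,1]$, so $h_0^*\geq 0$ and $h_0^*\in\M_A$. Symmetry of $h_0^*$ is immediate from \eqref{eq:Iso2D_0_shape}. For non-convexity of $z = -h_0^*$, differentiating twice and evaluating at the endpoints gives $h_0^{*\prime\prime}(\pm 1) = \lambda_{0,1}^*\bigl[\sqrt{\bond}\coth(\sqrt{\bond}\,)-1\bigr] > 0$ (using $\coth u > 1/u$ for $u>0$), so $-h_0^*$ is strictly concave at $x = \pm 1$, hence not convex on $[-1,1]$. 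The main obstacle in this plan is the first step --- rigorously establishing the single-function Rayleigh quotient, and identifying its infimum as exactly the fundamental sloshing eigenvalue --- which I would justify by appealing to the generalized Sturm-Liouville spectral theory for the pinned-edge shallow sloshing system developed earlier in the paper.
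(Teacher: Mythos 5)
Your argument is correct in outline but takes a genuinely different route from the paper. The paper constructs the optimal shape by imposing the first-order optimality condition $(\psi')^2 = c^2$ (so that $\psi$ is affine), solving the coupled system \eqref{eq:SloshCanalS} for $\zeta$ and then $h$, and invoking Theorem~\ref{thm:Iso2D_opt}: when $\psi'$ is constant, the kinetic energy $D_0(h;\psi)$ equals $c^2A/2$ for every $h\in\M_A$, so the energy bound decouples from $h$. You instead eliminate $\psi$ entirely, work with a single-function Rayleigh quotient in $\zeta$, and use Cauchy--Schwarz $\bigl(\int|Z|\bigr)^2\leq\int h\cdot\int Z^2/h$ to decouple $h$. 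The two mechanisms are secretly the same: since $h\psi' = -\Omega Z$, the Cauchy--Schwarz equality condition $h\propto|Z|$ is precisely ``$\psi'$ is constant.'' Your route is arguably tidier (no first-order condition, no Theorem~\ref{thm:Iso2D_opt} as black box), but it shifts the burden onto the single-function characterization $\lambda_{0,1}(h)=\inf_\zeta\bigl[\int\zeta^2+\bond^{-1}\int(\zeta')^2\bigr]/\int Z^2/h$. The paper does not provide a ``generalized Sturm--Liouville spectral theory'' from which this follows; what it provides is the two-function characterization \eqref{eq:Slosh2D_VCs}. You can derive your quotient from it by partial minimization: fix $\zeta$, minimize $D_0(h;\psi)$ over $\psi$ subject to $G(\psi,\zeta)=1$ (the natural boundary condition $h\psi'(\pm 1)=0$ together with $Z(\pm 1)=0$ gives $h\psi'=-\mu Z$ and hence $D_0=\bigl(2\int Z^2/h\bigr)^{-1}$), so $\Omega_{0,1}(h)=\inf_\zeta\bigl[(2\int Z^2/h)^{-1}+S_{\C,0}(\zeta)\bigr]$, and then optimize the scaling $\zeta\mapsto t\zeta$, which yields $\lambda_{0,1}(h)=\Omega_{0,1}(h)^2=\inf_\zeta\,2S_{\C,0}(\zeta)/\int Z^2/h$. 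Spelling this out is what closes the gap you flag.

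Two further points need attention. The step ``$Z^*(0)<0$, $Z^{*\prime\prime}(0)>0$, $Z^*(\pm 1)=0$, hence $Z^*\leq 0$'' is not a complete argument: those three facts alone do not rule out $Z^*$ crossing zero inside $(0,1)$. The clean argument (the one the paper uses to prove $h'<0$ on $(0,1)$) is that $\zeta^*(x)=x-\sinh(\sqrt{\bond}\,x)/\sinh\sqrt{\bond}>0$ on $(0,1)$ because $\sinh(z)/z$ is strictly increasing, hence $Z^*$ is strictly increasing on $(0,1)$ from $Z^*(0)<0$ to $Z^*(1)=0$. Second, your appeal to ``Sturm--Liouville structure'' to conclude that $(\lambda_{0,1}^*,\psi^*,\zeta^*)$ is the \emph{fundamental} eigenpair for $h_0^*$ overclaims what is available: the coupled system \eqref{eq:SloshCanalS} is not a classical Sturm--Liouville problem, no nodal-count theorem is established for it, and the paper explicitly concedes (see the remark following the main theorems) that this identification is verified numerically. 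Either adopt that same concession, or supply a separate proof of the ordering.
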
 

\begin{theorem}[$\bond < \infty$, $\alpha > 0$]\label{thm:Iso2D_1} 
Define $\kappa_\alpha\coloneqq \sqrt{\alpha^2 + \bond}$. Let $h$ be a shape function in $\M_A$. Then for all $\alpha > 0$, we have 
\begin{equation} \label{eq:Iso2D_1_eigs} 
\lambda_{\alpha, 1}(h)\le \lambda_{\alpha, 1}^*\coloneqq \frac{\alpha^2A}{2}\frac{\kappa_\alpha^2}{\bond}\left[1 - \frac{\tanh\kappa_\alpha}{\kappa_\alpha}\right]^{-1}. 
\end{equation} 
Equality holds for $h = h_\alpha^*$ defined by 
\begin{equation} \label{eq:Iso2D_1_shape} 
h_\alpha^*(x) = \frac{\lambda_{\alpha, 1}^*\bond}{\alpha^2\kappa_\alpha^2}\left[1 - \frac{\cosh\left(\kappa_\alpha x\right)}{\cosh\kappa_\alpha}\right]. 
\end{equation} 
In particular, $z = -h_\alpha^*$ is symmetric and convex on $[-1, 1]$. 
\end{theorem}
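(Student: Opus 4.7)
The plan is to combine a variational upper bound on $\lambda_{\alpha,1}(h)$ with a trial function whose associated energy makes the $h$-dependence of the bound collapse to the area constraint, and then to verify sharpness by direct substitution.

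\textbf{Variational framework.} I will use the coupled one-dimensional pinned-edge shallow sloshing system (derived earlier in the paper) for $(\varphi,\zeta)$ on $(-1,1)$,
\[
L_h\varphi := -(h\varphi')' + \alpha^2 h\varphi = \omega\,\zeta, \qquad M\zeta := \zeta - \bond^{-1}(\zeta'' - \alpha^2\zeta) = \omega\,\varphi,
\]
with $\zeta(\pm 1) = 0$. Because $L_h$ (for $\alpha>0$) and $M$ are nonnegative self-adjoint, the fundamental squared frequency admits the Rayleigh-type characterization
\[
\lambda_{\alpha,1}(h) \;=\; \omega_1^2 \;=\; \inf_{(\varphi,\zeta)} \frac{\langle L_h\varphi,\varphi\rangle\,\langle M\zeta,\zeta\rangle}{\langle\varphi,\zeta\rangle^2},
\]
over admissible pairs with $\langle \varphi,\zeta\rangle>0$, so any such trial pair yields an upper bound on $\lambda_{\alpha,1}(h)$.

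\textbf{Choice of trial functions and bound.} I take the constant velocity potential $\varphi\equiv 1$, which gives
\[
\langle L_h\cdot 1,\,1\rangle \;=\; \int_{-1}^{1}\alpha^2 h\,dx \;=\; \alpha^2 A,
\]
depending on $h$ only through the area. For $\zeta$ I take the minimizer of $\langle M\zeta,\zeta\rangle/\langle 1,\zeta\rangle^2$, namely the unique solution of $M\zeta = 1$ with $\zeta(\pm 1)=0$, which I solve explicitly as
\[
\zeta(x) \;=\; \frac{\bond}{\kappa_\alpha^2}\left[1 - \frac{\cosh(\kappa_\alpha x)}{\cosh\kappa_\alpha}\right].
\]
Since $\langle M\zeta,\zeta\rangle = \langle 1,\zeta\rangle$ by construction, the Rayleigh bound telescopes to
\[
\lambda_{\alpha,1}(h) \;\le\; \frac{\alpha^2 A}{\langle 1,\zeta\rangle} \;=\; \frac{\alpha^2 A}{2}\cdot\frac{\kappa_\alpha^2}{\bond}\left[1-\frac{\tanh\kappa_\alpha}{\kappa_\alpha}\right]^{-1} \;=\; \lambda_{\alpha,1}^{*},
\]
a bound independent of $h\in\M_A$.

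\textbf{Equality and shape properties.} The bound is saturated precisely when the trial pair is itself an eigenpair for some $h\in\M_A$. The first equation $L_h(1)=\omega_1\zeta$ forces $\alpha^2 h=\omega_1 \zeta$, so $h$ must be a scalar multiple of the $\zeta$ above; imposing the area constraint fixes the multiple and reproduces exactly \eqref{eq:Iso2D_1_shape}. A direct check then confirms $M h_\alpha^{*}\equiv \lambda_{\alpha,1}^{*}/\alpha^2$ and that $(1,\,\alpha^2 h_\alpha^{*}/\sqrt{\lambda_{\alpha,1}^{*}})$ solves the coupled system with $\omega^2=\lambda_{\alpha,1}^{*}$. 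Symmetry of $h_\alpha^{*}$ in $x$ is manifest, and strict convexity of $z=-h_\alpha^{*}$ follows from
\[
-(h_\alpha^{*})''(x) \;=\; \frac{\lambda_{\alpha,1}^{*}\,\bond}{\alpha^2\,\cosh\kappa_\alpha}\,\cosh(\kappa_\alpha x) \;>\;0.
\]

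\textbf{Main obstacle.} The most delicate step is justifying the two-operator Rayleigh characterization: because the eigenvalue problem is a generalized one involving a product of positive operators rather than a single self-adjoint Rayleigh quotient, one must symmetrize (via $M^{1/2}L_h M^{1/2}$ or an equivalent change of variable) to identify $\omega_1^2$ with the minimum of a standard quotient and thereby license the inequality used above, together with checking that $\varphi\equiv 1$ lies in the admissible class given the natural (non-essential) endpoint conditions on $\varphi$. Once that framework is in place, the striking collapse $\langle L_h\cdot 1,1\rangle=\alpha^2 A$ is what makes the remaining optimization entirely explicit.
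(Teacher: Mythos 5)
Your proof is essentially the paper's, repackaged. The paper's variational characterization (Theorem \ref{thm:Slosh2D_VC2}, applied in the shallow-water setting \eqref{eq:Slosh2D_VCs}) already gives
$\Omega_{\alpha,1}(h)=\inf\, E_\alpha(h;\psi,\zeta)/|G(\psi,\zeta)|$, where $E_\alpha = \tfrac12\langle L_h\psi,\psi\rangle + \tfrac12\langle M\zeta,\zeta\rangle$; minimizing over the scaling $(\psi,\zeta)\mapsto(t\psi,\zeta/t)$, which leaves $G$ invariant, and applying AM--GM turns this into exactly your product form $\lambda_{\alpha,1}(h)=\inf \langle L_h\psi,\psi\rangle\langle M\zeta,\zeta\rangle/\langle\psi,\zeta\rangle^2$. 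So the ``main obstacle'' you flag --- justifying the two-operator Rayleigh quotient via a symmetrization $M^{1/2}L_h M^{1/2}$ --- is not an obstacle at all once Theorem \ref{thm:Slosh2D_VC2} is in hand; no operator calculus is needed. With that settled, your trial pair ($\varphi\equiv 1$, $\zeta$ solving $M\zeta=1$) coincides, up to the irrelevant scalar $\Omega$, with the paper's $(\psi^*,\zeta^*)$, and the bound you derive matches the paper's \eqref{eq:Iso2D_opt1} after the scaling. The product form is perhaps a slightly cleaner way to expose the ``collapse'' $\langle L_h\cdot 1,1\rangle=\alpha^2A$, but both proofs solve the same ODEs and impose the same constraint.

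There is, however, a genuine gap in your argument that you do not flag, while flagging a non-issue. You write ``the bound is saturated precisely when the trial pair is itself an eigenpair for some $h\in\M_A$,'' and you then verify only that $(1,\alpha^2 h_\alpha^*/\sqrt{\lambda_{\alpha,1}^*})$ solves the coupled system for $h_\alpha^*$. Being \emph{an} eigenpair is necessary for equality but not sufficient: the infimum in the Rayleigh characterization is the \emph{fundamental} frequency, so equality requires $(\psi^*,\zeta^*)$ to be the fundamental eigenfunction of \eqref{eq:SloshCanalS} for $h_\alpha^*$, i.e., that $\lambda_{\alpha,1}^*$ is the smallest positive squared frequency for that shape. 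Your direct substitution does not rule out a smaller positive eigenvalue. The paper is explicit that this is exactly the missing ingredient and states (in the remark after the main theorems, and as Step~4 of the strategy in Section~\ref{sec:Iso2D}) that it is verified numerically via a finite-difference discretization. You should replace ``precisely when the trial pair is itself an eigenpair'' by ``when the trial pair is the \emph{fundamental} eigenpair'' and either supply an argument for fundamentality (e.g., a sign/nodal argument for this particular Sturm--Liouville system) or acknowledge, as the paper does, that this step is checked numerically.
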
 \phantom{x} 

\noindent\tb{Radially symmetric containers.} In the case of radially symmetric containers, we convert to cylindrical coordinates $(r, \theta, z)$ and look for solutions of the form $\Phi(r, \theta, z) = \varphi(r, z)\cos(m\theta)$ and $\xi(r, \theta) = \zeta(r)\cos(m\theta)$, with $m = 0, 1, 2, \dots$ the number of azimuthal nodal lines. For a (sufficiently small) fixed volume $V > 0$, we introduce the following class of admissible shape functions for shallow radially symmetric containers 
\[ \M_V\coloneqq \left\{h\in \PC^1[0, 1]\colon \tr{$h\ge 0$ on $[0, 1]$; $\int_0^1 hr\, dr = V/2\pi$}\right\}. \] 
For every $m = 0, 1, 2, \dots$, let $\Omega_{m, 1}$ denote the fundamental sloshing frequency for a shallow radially symmetric container in the presence of surface tension and define $\lambda_{m, 1}\coloneqq \Omega_{m, 1}^2$. The next two theorems generalize Troesch's result for $m = 1$ and $m = 0$ from $\bond = \infty$ to $\bond < \infty$. Throughout this paper, $I_\nu$ and $\Lb_\nu$ are modified Bessel and Struve functions of the first kind of order $\nu$, respectively, and ${}_pF_q$ is the generalized hypergeometric function; see \cite[Section~10.25]{NIST} for $I_\nu$, \cite[Chapter~11]{NIST} for $\Lb_\nu$, and \cite[Chapter~16]{NIST} for ${}_pF_q$. 

\begin{theorem}[$m = 1$] \label{thm:Iso3D_1} 
Let $h$ be a shape function in $\M_V$. Then the following inequality holds: 
\begin{equation} \label{eq:Iso3D_1_eigs} 
\lambda_{1, 1}(h)\le \lambda_{1, 1}^*\coloneqq \frac{4V}{\pi}\left[1 - \frac{4I_2(\sqrt{\bond}\, )}{\sqrt{\bond}\, I_1(\sqrt{\bond}\, )}\right]^{-1}. 
\end{equation} 
Equality holds for $h = h_1^*$ defined by 
\begin{equation} \label{eq:Iso3D_1_shape} 
h_1^*(r) = \frac{\lambda_{1, 1}^*}{2}(1 - r^2) - \frac{\lambda_{1, 1}^*}{\sqrt{\bond}\, I_1(\sqrt{\bond}\, )}\left[I_0(\sqrt{\bond}\, ) - I_0(\sqrt{\bond}\, r)\right]. 
\end{equation} 
In particular, $z = -h_1^*$ is not convex on $[0, 1]$. 
\end{theorem}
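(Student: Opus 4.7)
My plan is to extend Troesch's $m=1$ approach by combining the surface-tension-modified Rayleigh characterization of $\lambda_{1,1}(h)$ with a carefully chosen test function. Pairing the shallow-water continuity and dynamic boundary equations for the radial profiles $(\zeta, \varphi)$ and multiplying the resulting identities, one obtains a two-variable Rayleigh quotient
\begin{equation*}
\lambda_{1,1}(h) = \inf_{(\zeta, \varphi)} \frac{\left[\int_0^1 \zeta^2 r\,dr + \frac{1}{\bond}\int_0^1\!\!\left((\zeta')^2 + \zeta^2/r^2\right) r\,dr\right]\cdot \int_0^1 h\bigl[(\varphi')^2 + \varphi^2/r^2\bigr]\, r\, dr}{\left(\int_0^1 \zeta\varphi\, r\, dr\right)^2}
\end{equation*}
with $\zeta(0) = \zeta(1) = 0$ and $\varphi$ regular at the origin. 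The critical observation is that the 2D harmonic function $u(r,\theta) = r\cos\theta = x$ has $|\nabla u|^2 \equiv 1$, so choosing $\varphi_0(r) = r$ gives $(\varphi_0')^2 + \varphi_0^2/r^2 \equiv 2$; the $h$-dependent factor then collapses to $2\int_0^1 h\,r\,dr = V/\pi$, independently of $h \in \M_V$. This is the precise analogue of Troesch's test function $\zeta(r) = r$ in the surface-tension-free setting.

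With $\varphi_0 = r$ fixed, the bound becomes $\lambda_{1,1}(h) \le (V/\pi)\, Q(\zeta_0)$ for every admissible $\zeta_0$, where $Q(\zeta_0)$ denotes the $h$-free ratio inherited from the quotient above. The Euler--Lagrange condition for the one-dimensional minimization of $Q$ subject to $\zeta_0(0) = \zeta_0(1) = 0$ is
\begin{equation*}
\zeta - \frac{1}{\bond}\left(\zeta'' + \frac{\zeta'}{r} - \frac{\zeta}{r^2}\right) = \mu\, r
\end{equation*}
for a constant $\mu$; this is an inhomogeneous modified Bessel equation of order one. Since the operator on the left-hand side annihilates $r$, a particular solution is $\zeta_p = \mu r$; combined with the unique homogeneous solution $I_1(\sqrt{\bond}\, r)$ regular at the origin, the boundary condition at $r = 1$ selects $\zeta^*(r) = \mu\bigl[r - I_1(\sqrt{\bond}\, r)/I_1(\sqrt{\bond})\bigr]$.

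To saturate the bound I would then construct $h_1^*$ so that $(\zeta^*, r)$ is a genuine eigenpair. Substituting $\varphi = r$ into the shallow-water continuity equation collapses it to the first-order ODE $h'(r) = -\omega\,\zeta^*(r)$; integrating with the natural endpoint $h_1^*(1) = 0$ and using $\int_0^r I_1(\sqrt{\bond}\, s)\,ds = [I_0(\sqrt{\bond}\, r) - 1]/\sqrt{\bond}$ (from $I_0' = I_1$) recovers \eqref{eq:Iso3D_1_shape} with $\mu^2 = \lambda_{1,1}^*$. The volume constraint $\int_0^1 h_1^* r\,dr = V/(2\pi)$ then pins down $\mu^*$, and the Bessel recurrence $I_0(z) - I_2(z) = 2 I_1(z)/z$ evaluated at $z = \sqrt{\bond}$ simplifies the prefactor to the closed form \eqref{eq:Iso3D_1_eigs}. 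I would verify $h_1^* \ge 0$ on $[0,1]$ by a direct monotonicity comparison, and establish non-convexity of $z = -h_1^*$ by computing $h_1^{*\prime\prime}(r) = \lambda^*\bigl[\sqrt{\bond}\, I_1'(\sqrt{\bond}\, r)/I_1(\sqrt{\bond}) - 1\bigr]$ and showing $h_1^{*\prime\prime}(1) > 0$, which reduces via $I_1'(z) = I_0(z) - I_1(z)/z$ to the inequality $\sqrt{\bond}\, I_0(\sqrt{\bond}) > 2 I_1(\sqrt{\bond})$ (immediate from the power-series expansions). The main obstacle I anticipate is ensuring that all integration-by-parts boundary terms at $r = 1$ vanish in the Rayleigh derivation — this cancellation requires \emph{both} the pinned condition $\zeta^*(1) = 0$ and the natural endpoint $h_1^*(1) = 0$ acting in concert, and is precisely what makes the choice $\varphi_0 = r$ saturate the bound.
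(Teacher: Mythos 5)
Your proposal is correct and follows essentially the same route as the paper's: choose $\psi^{*}(r)=r$ so that $(\psi')^{2}+\psi^{2}/r^{2}\equiv 2$ renders the kinetic term $h$--independent; solve the resulting inhomogeneous modified Bessel equation for $\zeta^{*}$; recover $h_1^{*}$ from $h'=-\Omega\zeta^{*}$ with $h(1)=0$; and pin $\lambda_{1,1}^{*}$ down from the volume constraint via $I_0(z)-I_2(z)=2I_1(z)/z$. The one cosmetic difference is that you work with the factored Rayleigh quotient $4D_{\RR,1}S_{\RR,1}/G_\RR^2$ rather than the additive quotient $E_{\RR,1}/G_\RR$ used in the paper's Theorem \ref{thm:Iso3D_opt}; these are equivalent after optimizing over independent rescalings of $\varphi$ and $\zeta$, and both hinge on the same observation that $D_{\RR,1}(h;r)$ collapses to $V/(2\pi)$ for all $h\in\M_V$. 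One small slip in wording: the operator $L\zeta=\zeta-\tfrac{1}{\bond}\bigl(\zeta''+\zeta'/r-\zeta/r^2\bigr)$ does not annihilate $r$ --- only its second-order part does, so $L(r)=r$ --- but your conclusion $\zeta_p=\mu r$ remains correct. Your explicit non-convexity check, $h_1^{*\prime\prime}(1)>0 \Leftrightarrow \sqrt{\bond}\,I_0(\sqrt{\bond})>2I_1(\sqrt{\bond})$, is a nice addition that the paper leaves implicit, and it follows at once from the Maclaurin series of $I_0$ and $I_1$.
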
 

\begin{theorem}[$m = 0$] \label{thm:Iso3D_0} 
Let $h$ be a shape function in $\M_V$. Then the following inequality holds: 
\begin{equation} \label{eq:Iso3D_0_eigs} 
\begin{alignedat}{1} 
\lambda_{0, 1}(h) & \le \lambda_{0, 1}^*\coloneqq \frac{18V}{\pi}\bigg[6d_0 - 3 + \frac{18(1 - d_0)\pi\Upsilon(\sqrt{\bond}\, )}{\bond\, I_0(\sqrt{\bond}\, )} \\ 
& \hspace{2.5cm} + 3{}_2F_3\left(1, 2; \frac{3}{2}, \frac{5}{2}, 3; \frac{\bond}{4}\right) - \frac{9\pi^2\Lb_0(\sqrt{\bond}\, )\Upsilon(\sqrt{\bond}\, )}{\bond^{3/2}I_0(\sqrt{\bond}\, )}\bigg]^{-1},  
\end{alignedat} 
\end{equation}
where 
\begin{equation} \label{eq:Iso3D_0_d0} 
\begin{alignedat}{1} 
\Upsilon(\sqrt{\bond}\, ) & \coloneqq I_1(\sqrt{\bond}\, )\, \Lb_0(\sqrt{\bond}\, ) - I_0(\sqrt{\bond}\, )\, \Lb_1(\sqrt{\bond}\, ) \\ 
d_0 & \coloneqq \frac{3\pi\Upsilon(\sqrt{\bond}\, ) + 2\bond\, I_0(\sqrt{\bond}\, ) - 6\sqrt{\bond}\, I_1(\sqrt{\bond}\, )}{3\bond\, I_2(\sqrt{\bond}\, )}. 
\end{alignedat} 
\end{equation} 
Equality holds for $h = h_0^*$ defined by 
\begin{equation} \label{eq:Iso3D_0_shape} 
\begin{alignedat}{1} 
h_0^*(r) = \frac{\lambda_{0, 1}^*}{3}\left[\frac{3d_0r}{2} - r^2\right] & + \frac{\lambda_{0, 1}^*(1 - d_0)I_1(\sqrt{\bond}\, r)}{\sqrt{\bond}\, I_0(\sqrt{\bond}\, )} \\ 
& + \frac{\lambda_{0, 1}^*\pi}{2\bond}\left[\Lb_1(\sqrt{\bond}\, r) - \frac{\Lb_0(\sqrt{\bond}\, )I_1(\sqrt{\bond}\, r)}{I_0(\sqrt{\bond}\, )}\right]. 
\end{alignedat} 
\end{equation} 
In particular, $h_0^*(0) = 0$ and $z = -h_0^*$ is not convex on $[0, 1]$. 
\end{theorem}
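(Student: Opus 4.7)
The plan is to follow the variational template used by the authors in Theorems~\ref{thm:Iso2D_0}, \ref{thm:Iso2D_1}, and \ref{thm:Iso3D_1}, adapted to the $m = 0$ radial setting. My starting point will be the Rayleigh quotient characterization of $\lambda_{0,1}(h)$ for the shallow radially symmetric sloshing problem with surface tension, whose trial space consists of $\PC^1$ functions $\zeta$ on $[0,1]$ satisfying the pinned-end condition $\zeta(1) = 0$ together with the $m = 0$ mass-conservation constraint $\int_0^1 \zeta(r)\, r\, dr = 0$. The crucial difference from the $m = 1$ case is that mass conservation is automatic for $m \ge 1$ because the angular average of $\cos(m\theta)$ vanishes, whereas for $m = 0$ it must be imposed as a nontrivial linear constraint on the trial space, introducing an extra Lagrange multiplier and explaining the richer structure of \eqref{eq:Iso3D_0_shape} relative to \eqref{eq:Iso3D_1_shape}. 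I would set up a Lagrangian for the saddle-point problem $\sup_h \inf_\zeta$ with multipliers for the volume constraint $\int_0^1 h\, r\, dr = V/(2\pi)$, mass conservation, and the pinned edge.

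Varying in $\zeta$ would recover the shallow $m = 0$ Sturm--Liouville eigenvalue equation for $\zeta^*$, now with a constant-in-$r$ forcing arising from the mass-conservation multiplier, while varying in $h$ would yield the pointwise optimality condition $(\zeta^*{}'(r))^2 = \text{const}$ on $\{h^* > 0\}$. Combining these forces $\zeta^*$ to solve an inhomogeneous modified Bessel equation of the form $\zeta^*{}'' + r^{-1}\zeta^*{}' - \bond\, \zeta^* = c_1 + c_2 r$, whose particular solutions are a constant and a term proportional to the modified Struve function $\Lb_1(\sqrt{\bond}\, r)$; this is precisely why $\Lb_0, \Lb_1$ appear in \eqref{eq:Iso3D_0_shape} but not in \eqref{eq:Iso3D_1_shape}. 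Imposing boundedness at $r = 0$, $\zeta^*(1) = 0$, and the mass constraint would fix all free constants, with the combination $\Upsilon(\sqrt{\bond}) = I_1 \Lb_0 - I_0 \Lb_1$ in \eqref{eq:Iso3D_0_d0} arising naturally as a Wronskian-type pairing of the Bessel and Struve solutions, and $d_0$ emerging as the ratio of the two forcing amplitudes needed to enforce $h_0^*(1) = 0$; a short algebraic rearrangement using the identity $\bond\, I_2(\sqrt{\bond}) = \bond\, I_0(\sqrt{\bond}) - 2\sqrt{\bond}\, I_1(\sqrt{\bond})$ converts this into the stated form. I would then recover $h^*$ by integrating the shape equation from $r = 1$ inward using $h^*(1) = 0$ and regularity at the origin, yielding \eqref{eq:Iso3D_0_shape}, and obtain \eqref{eq:Iso3D_0_eigs} by substituting $(h^*, \zeta^*)$ into the Rayleigh quotient and using the identity $\int_0^1 \Lb_1(\sqrt{\bond}\, r)^2\, r\, dr \propto {}_2F_3\bigl(1, 2;\, \tfrac{3}{2}, \tfrac{5}{2}, 3;\, \bond/4\bigr)$, which accounts for the hypergeometric term.

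The claims $h_0^*(0) = 0$ and the non-convexity of $z = -h_0^*$ then follow by direct inspection of \eqref{eq:Iso3D_0_shape}: the former is immediate from $I_1(0) = \Lb_1(0) = 0$ and the vanishing of the $r$ and $r^2$ terms at the origin, while the latter is seen by comparing $h_0^*{}''$ at $r = 0$ against its behavior near $r = 1$ where $h_0^*(1) = 0$. The main obstacle I anticipate is verifying that the optimizer genuinely has the connected support described by \eqref{eq:Iso3D_0_shape}, i.e., that $h_0^* > 0$ on $(0, 1)$ so that $h_0^* \in \M_V$, rather than a disconnected one as in Troesch's higher-mode solutions; this will require a careful asymptotic and monotonicity analysis of the Bessel--Struve combination uniformly in $\bond > 0$. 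A secondary technical point will be justifying the $\sup$--$\inf$ exchange in the presence of the additional mass-conservation constraint, which should follow from standard min-max duality once the Lagrangian has been properly set up.
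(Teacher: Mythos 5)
Your proposal has the right general outline (variational characterization, an optimality condition from varying the shape, a nonhomogeneous Bessel/Struve equation, the volume constraint determining $\lambda$), but there is a genuine structural error in the central step, and it propagates. The energy functional in \eqref{eq:Slosh3D_VCs} is a functional of the \emph{pair} $(\psi,\zeta)$, where $\psi$ is the depth-averaged velocity potential and $\zeta$ is the free-surface elevation; the shape $h$ enters only through the kinetic term $\frac{1}{2}\int_0^1 h\bigl[(\psi')^2 + \frac{m^2}{r^2}\psi^2\bigr]r\,dr$. Consequently, varying $h$ (with $\int_0^1 hr\,dr$ fixed) forces $(\psi')^2 + \frac{m^2}{r^2}\psi^2$ to be constant on $\{h^*>0\}$ — for $m=0$ this is $(\psi^*)'\equiv\text{const}$, so $\psi^*$ is \emph{affine}, $\psi^*(r)=r-d_0$. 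You instead set up a single-field Rayleigh quotient in $\zeta$ alone and claim the optimality condition is $(\zeta^*{}')^2\equiv\text{const}$. That is not what falls out of the stationarity in $h$, because the $h$-dependent term of the energy does not involve $\zeta$ at all. The paper's structure is: affine $\psi^*$ from the optimality condition, then $\zeta^*$ solves the Bessel/Struve-forced equation \eqref{eq:Slosh3Ds2} with right-hand side $\Omega(r-d_0)$, and finally $h^*$ is recovered by integrating $(rh)'=-\Omega r\zeta^*$.

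Your version also contains an internal inconsistency: if $(\zeta^*{}')^2$ were constant, $\zeta^*$ would be affine, which contradicts your next sentence that $\zeta^*$ solves an inhomogeneous modified Bessel equation whose solution involves $I_0$ and $\Lb_0$. The two statements cannot both describe the same function. The parameter $d_0$ is not "the ratio of the two forcing amplitudes needed to enforce $h_0^*(1)=0$"; it is the free additive constant in the affine $\psi^*$, and it is fixed by imposing the $m=0$ mass-conservation constraint $\int_0^1\zeta^* r\,dr=0$ (equation \eqref{eq:Iso3D_05} in the proof), while $h^*(1)=0$ is the shallow analogue of the no-penetration boundary condition $(h\psi')(1)=0$ and is enforced separately when integrating $(rh)'$. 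Finally, the ${}_2F_3$ in \eqref{eq:Iso3D_0_eigs} comes from the \emph{linear} moment $\int_0^1\Lb_1(\sqrt{\bond}\,r)\,r\,dr$ appearing in the volume constraint $\int_0^1 h^* r\,dr = V/(2\pi)$, not from a quadratic integral $\int_0^1\Lb_1(\sqrt{\bond}\,r)^2\,r\,dr$. To repair the argument you should reinstate $\psi$ as an independent trial field, derive the optimality condition on $\psi$ from the $h$-variation, and then follow the three-step chain $\psi^*\mapsto\zeta^*\mapsto h^*$ before applying the volume constraint.
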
 \phantom{x} 

\noindent \tb{Effects of surface tension.} Let us describe the effects of surface tension ($\bond < \infty$) on the solution to the isoperimetric sloshing problem. 
\begin{enumerate} 
\item Except for the case $\alpha > 0$, the optimal shallow containers $h^*$ are no longer convex. Specifically, these containers flatten near the contact point, \ie $h^*$ and its derivative vanish on the boundary of the free surface $\F$.  
\item Our isoperimetric inequalities for $\lambda_{1, \bond}(h)$ can be interpreted as 
\begin{equation*} 
\lambda_{1, \bond}(h) \le \lambda_{1, \bond }^* = C \cdot\lambda_{1, \bond = \infty}^*, 
\end{equation*} 
where $C = C(\bond) > 1$ for $\bond > 0$ and $\lambda_{1, \bond = \infty}^*$ is the squared maximal sloshing frequency for $\bond = \infty$. In particular, $\bond\mapsto C(\bond)$ is strictly decreasing on $(0, \infty)$ and for $\bond = 1$, $C$ is approximately $16.4$ for $\alpha = 0$, between 4.2 and 48 for $\alpha < 2\pi$ say, 25.5 for $m = 1$, and 38.8 for $m = 0$, \ie \emph{surface tension results in a significantly larger squared maximal sloshing frequency}. 
\end{enumerate} 
Because all our solutions are explicit, we are able to show that the limit of the solution $(h_{\bond}^*, \lambda_{1, \bond}^*)$ to the isoperimetric sloshing problem with surface tension, as surface tension vanishes, \ie $\bond\to\infty$, is the solution $(h_{\bond = \infty}^*, \lambda_{1, \bond = \infty}^*)$ to the isoperimetric sloshing problem without surface tension; see Corollaries \ref{thm:Iso2D_zeroST_0}, \ref{thm:Iso2D_zeroST_1}, \ref{thm:Iso3D_zeroST_1}, \ref{thm:Iso3D_zeroST_0}. 

The results in this paper lend theoretical justification for the practice of adding surfactant to a liquid in certain vessel geometries to change the fundamental sloshing frequency and help mitigate negative consequences of sloshing dynamics in certain applications. 

\begin{remark} 
There are a few assertions in the proofs of our main results that we verify numerically. Except for Theorem \ref{thm:Iso2D_zero}, we verify using the finite difference method that $\lambda_{1, \bond}^*$ is indeed the squared \emph{fundamental} sloshing frequency for the corresponding shape function $h_{1, \bond}^*$. For Corollary \ref{thm:Iso3D_zeroST_0}, we use Mathematica to compute the limit of the third term of $h_0^*$ involving combinations of the modified Bessel and Struve functions as $\bond\to\infty$ for any $r\in (0, 1)$. 
\end{remark}

\noindent\tb{Outline.} The paper is structured as follows. In sections \ref{sec:canal} and \ref{sec:RS}, we describe the reduction of \eqref{eq:SloshST} when the container is a canal and a radially symmetric container, respectively. We generalize our recent variational characterization of fluid sloshing with surface tension \cite{tan:2017} to the case of a pinned contact line in subsections \ref{sec:canal_variational} and \ref{sec:RS_variational}. In subsections \ref{sec:Iso2D} and \ref{sec:Iso3D}, we derive the pinned-edge linear shallow sloshing problem and outline our approach in solving the isoperimetric sloshing problem. We prove Theorem \ref{thm:Iso2D_zero} in subsection \ref{sec:Iso2D_zero}, Theorems \ref{thm:Iso2D_0} and \ref{thm:Iso2D_1} in subsection \ref{sec:Iso2D_OC}, and Theorems \ref{thm:Iso3D_1} and \ref{thm:Iso3D_0} in subsection \ref{sec:Iso3D_OC}. We establish the zero surface tension limit of the solution to the isoperimetric sloshing problem in subsections \ref{sec:Iso2D_zeroST} and \ref{sec:Iso3D_zeroST}. We conclude in Section \ref{sec:disc} with a discussion.


\section{Canals} \label{sec:canal} 
We choose the halfwidth $x_0$ of the equilibrium free surface as the characteristic length scale. Let $(x, y, z)$ be dimensionless Cartesian coordinates with $y$ directed along the length of the canal, which is unbounded and $z$ vertically upwards; see Figure \ref{fig:container}(left). Let $h(x)$ be a dimensionless function describing the profile of the container's bottom. A canal is the equilibrium fluid domain $\D = \C\times\{y\in\R\}$, bounded by the wetted bottom $\B = (-1, 1)\times\R\times\{z = -h(x)\}$, the free surface $\F = (-1, 1)\times\R\times\{0\}$, and the contact line $\del\F = \{\pm 1\}\times\R\times\{0\}$. We think of $\C$ as the cross-section of $\D$ that lies on the plane $y = 0$. 

We seek sinusoidal solutions of \eqref{eq:SloshST} oscillating with wavenumber $\alpha\ge 0$ in the positive $y$-direction, \ie we write the natural sloshing modes $(\Phi, \xi)$ as $\Phi(x, y, z) = \varphi(x, y)\cos(\alpha y)$ and $\xi(x, y) = \zeta(x)\cos(\alpha y)$. These ansatzes reduce \eqref{eq:SloshST} to the following two-dimensional generalized mixed Steklov problem for $(\omega, \varphi, \zeta)$ on $\overline{\C}$: 
\begin{subequations} \label{eq:SloshCanal} 
\begin{alignat}{3} 
\label{eq:SloshC1} \Delta_\C\varphi & = \alpha^2\varphi && \ \ \tr{ in } \ \ && \C, \\
\label{eq:SloshC2} \del_\n\varphi & = 0 && \ \ \tr{ on } \ \ && \B_\C\coloneqq\del\C\cap\B, \\ 
\label{eq:SloshC3} \del_z\varphi & = \omega\zeta && \ \ \tr{ on } \ \ && \F_\C\coloneqq\del\C\cap\F, \\ 
\label{eq:SloshC4} \left(1 + \frac{\alpha^2}{\bond}\right)\zeta - \frac{1}{\bond}\zeta'' & = \omega\varphi && \ \ \tr{ on } \ \ && \F_\C, \\
\label{eq:SloshC5} \zeta(\pm 1) & = 0. 
\end{alignat} 
\end{subequations} 
Here, $\nabla_\C\coloneqq (\del_x, \del_z)$, $\Delta_\C = \nabla_\C\cdot\nabla_\C = \del_{xx} + \del_{zz}$, and we now have two \emph{contact points} $(x, z) = (\pm 1, 0)$ in \eqref{eq:SloshCanal}. It is straightforward to verify that the ansatz for $\xi$ satisfies the necessary condition $\int_\F\xi\, dA = 0$ due to the factor $\cos(\alpha y)$ for $\alpha > 0$. The case $\alpha = 0$ corresponds to the two-dimensional transverse sloshing problem on $\overline{\C}$ and a necessary condition for the existence of solutions of \eqref{eq:SloshCanal} is $\int_{-1}^1 \zeta\, dx = 0$.


\subsection{Variational principle} \label{sec:canal_variational} 
Let $H^1(\C) = W^{1, 2}(\C)$ and $H_0^1(-1, 1) = W_0^{1, 2}(-1, 1)$ denote the standard Sobolev spaces with real-valued functions, both equipped with norms induced by their natural inner products. Define the Hilbert space $\H_\C\coloneqq H^1(\C)\times H_0^1(-1, 1)$ with norm $\|(\varphi, \zeta)\|_{\H_\C}^2\coloneqq \|\varphi\|_{H^1(\C)}^2 + \|\zeta\|_{H^1(-1, 1)}^2$. Suppose $(\omega, \varphi, \zeta)$ is a sufficiently regular solution of \eqref{eq:SloshCanal} for $\alpha > 0$. Testing \eqref{eq:SloshC1}, \eqref{eq:SloshC4} with $f\in H^1(\C), g\in H_0^1(-1, 1)$, respectively, and using the remaining equations in \eqref{eq:SloshCanal}, we arrive at the following weak formulation of \eqref{eq:SloshCanal} for $\alpha > 0$. 

\begin{definition} 
Given $\alpha > 0$, we say that $(\omega_\alpha, \varphi_\alpha, \zeta_\alpha)\in\R\times\H_\C, (\varphi_\alpha, \zeta_\alpha)\neq (\0, \0)$ is a weak sloshing eigenpair of \eqref{eq:SloshCanal} if the following holds for all $(f, g)\in\H_\C$: 
\begin{equation} \label{eq:Slosh2D_weak} 
\begin{alignedat}{1} 
\int_\C \left(\nabla_C\varphi_\alpha\cdot\nabla_\C f + \alpha^2\varphi_\alpha f\right) dA & + \int_{-1}^1 \left[\left(1 + \frac{\alpha^2}{\bond}\right)\zeta_\alpha g + \frac{1}{\bond}\zeta_\alpha'g'\right] dx \\ 
& = \omega_\alpha\int_{\F_\C} \left(\zeta_\alpha f + \varphi_\alpha g\right) dx. 
\end{alignedat} 
\end{equation} 
\end{definition} 

For $\alpha = 0$, we introduce the Hilbert space $\H_{\C, 0}\coloneqq \{(\varphi, \zeta)\in\H_\C\colon \int_{-1}^1 \zeta\, dx = 0\}$ which is a closed subspace of $\H_\C$, with norm induced by the norm of $\H_\C$. 

\begin{definition} 
We say that $(\omega_0, \varphi_0, \zeta_0)\in\R\times\H_{\C, 0}, (\varphi_0, \zeta_0)\neq (\0, \0)$ is a weak sloshing eigenpair of \eqref{eq:SloshCanal} for $\alpha = 0$ if the following holds for all $(f, g)\in\H_{\C, 0}$: 
\begin{equation} \label{eq:Slosh2D_weak0} 
\int_\C \nabla_C\varphi_0\cdot\nabla_\C f\, dA + \int_{-1}^1 \left(\zeta_0 g + \frac{1}{\bond}\zeta_0'g'\right) dx = \omega_0\int_{\F_\C} \left(\zeta_0 f + \varphi_0 g\right) dx. 
\end{equation} 
\end{definition} 

If $(\omega_\alpha, \varphi_\alpha, \zeta_\alpha)$ is a weak sloshing eigenpair of \eqref{eq:SloshCanal}, then so are $(-\omega_\alpha, \pm\varphi_\alpha, \mp\zeta_\alpha)$ and we may restrict our attention to weak sloshing eigenpairs with $\omega_\alpha > 0$. We now derive a sufficient condition for obtaining positive sloshing frequencies. To this end, define the functional 
\[ G_\C(\varphi, \zeta) = \int_{\F_\C} \varphi\zeta\, dx, \] 
and the energy functional $E_{\C, \alpha}(\varphi, \zeta)$ which is the sum of the kinetic energy $D_{\C, \alpha}(\varphi)$ and the potential energy $S_{\C, \alpha}(\zeta)$ of the fluid under small amplitude oscillations: 
\begin{align*} 
E_{\C, \alpha}(\varphi, \zeta) & \coloneqq \underbrace{\frac{1}{2}\int_\C \Big[|\nabla_\C\varphi|^2 + \alpha^2\varphi^2\Big]\, dA}_{D_{\C, \alpha}(\varphi)} + \underbrace{\frac{1}{2}\int_{-1}^1 \left[\left(1 + \frac{\alpha^2}{\bond}\right)\zeta^2 + \frac{1}{\bond}(\zeta')^2\right] dx}_{S_{\C, \alpha}(\zeta)}. 
\end{align*} 

\begin{lemma} \label{thm:Slosh2D_VC1} 
Given $\alpha\ge 0$, suppose $(\omega_\alpha, \varphi_\alpha, \zeta_\alpha)$ is a weak sloshing eigenpair of \eqref{eq:SloshCanal}. We have the identity $\omega_\alpha G_\C(\varphi_\alpha, \zeta_\alpha) =  D_{\C, \alpha}(\varphi_\alpha) + S_{\C, \alpha}(\zeta_\alpha) = E_{\C, \alpha}(\varphi_\alpha, \zeta_\alpha)$. 
In particular, $\omega_\alpha$ and $G_\C(\varphi_\alpha, \zeta_\alpha)$ have the same sign provided $G_\C(\varphi_\alpha, \zeta_\alpha)\neq 0$. 
\end{lemma}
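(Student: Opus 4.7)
The plan is to obtain the identity by substituting the eigenfunction pair itself as the test function in the weak formulation, and then extract the sign statement from the non-negativity of the energy.

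\textbf{Main computation.} For $\alpha > 0$, the eigenpair $(\varphi_\alpha, \zeta_\alpha)$ lies in $\H_\C$ by definition, so I may take $(f, g) = (\varphi_\alpha, \zeta_\alpha)$ in \eqref{eq:Slosh2D_weak}. The left-hand side then becomes exactly $2 D_{\C,\alpha}(\varphi_\alpha) + 2 S_{\C,\alpha}(\zeta_\alpha) = 2 E_{\C,\alpha}(\varphi_\alpha,\zeta_\alpha)$, while the right-hand side is $\omega_\alpha \int_{\F_\C}(\zeta_\alpha \varphi_\alpha + \varphi_\alpha \zeta_\alpha)\,dx = 2\omega_\alpha G_\C(\varphi_\alpha, \zeta_\alpha)$. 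Dividing by $2$ yields the desired identity. For $\alpha = 0$, the eigenpair lies in $\H_{\C,0}$ by definition (so in particular the required mean-zero constraint on $\zeta_0$ is built in), hence the same substitution $(f,g) = (\varphi_0, \zeta_0)$ is admissible in \eqref{eq:Slosh2D_weak0} and the identical computation gives $E_{\C,0}(\varphi_0,\zeta_0) = \omega_0 G_\C(\varphi_0,\zeta_0)$.

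\textbf{Sign statement.} Having established $\omega_\alpha G_\C(\varphi_\alpha, \zeta_\alpha) = E_{\C,\alpha}(\varphi_\alpha,\zeta_\alpha)$, it suffices to show that $E_{\C,\alpha}(\varphi_\alpha,\zeta_\alpha) > 0$ whenever $G_\C(\varphi_\alpha, \zeta_\alpha) \neq 0$. Both $D_{\C,\alpha}$ and $S_{\C,\alpha}$ are integrals of non-negative quantities, so $E_{\C,\alpha} \ge 0$. If $G_\C(\varphi_\alpha,\zeta_\alpha) = \int_{\F_\C} \varphi_\alpha \zeta_\alpha\,dx \neq 0$, then in particular $\zeta_\alpha \not\equiv 0$ on $(-1,1)$, so the term $\int_{-1}^1 (1 + \alpha^2/\bond)\zeta_\alpha^2\,dx$ appearing in $S_{\C,\alpha}(\zeta_\alpha)$ is strictly positive. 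Hence $E_{\C,\alpha}(\varphi_\alpha,\zeta_\alpha) > 0$, and the identity then forces $\omega_\alpha$ and $G_\C(\varphi_\alpha,\zeta_\alpha)$ to share their sign.

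\textbf{Anticipated obstacles.} There is essentially no analytical difficulty; the result is the standard Rayleigh-quotient consequence of a bilinear weak formulation, and the only points that require a moment of care are (i) verifying that the pair $(\varphi_\alpha, \zeta_\alpha)$ is itself an admissible test function in the appropriate Hilbert space, which is immediate in both cases $\alpha > 0$ and $\alpha = 0$ from the definitions of $\H_\C$ and $\H_{\C,0}$, and (ii) confirming strict positivity of $S_{\C,\alpha}(\zeta_\alpha)$ under the hypothesis $G_\C \neq 0$, handled above. No integration by parts, trace inequality, or regularity argument is needed at this stage since the weak formulation has already absorbed the boundary terms and the capillary operator.
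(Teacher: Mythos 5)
Your proof is correct and follows essentially the same approach as the paper: substitute $(f,g)=(\varphi_\alpha,\zeta_\alpha)$ into the appropriate weak formulation to obtain the identity, then deduce the sign statement from nonnegativity of the energy. Your handling of the sign statement is marginally more direct than the paper's (you argue $G_\C\neq 0\Rightarrow\zeta_\alpha\not\equiv 0\Rightarrow S_{\C,\alpha}(\zeta_\alpha)>0$, whereas the paper invokes the characterization of the kernel of $E_{\C,\alpha}$), but the two arguments amount to the same observation.
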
 
\begin{proof} 
The identity follows from substituting $(f, g) = (\varphi_\alpha, \zeta_\alpha)$ in \eqref{eq:Slosh2D_weak} and \eqref{eq:Slosh2D_weak0}. The second assertion follows by noting that $E_{\C, \alpha}$ is nonnegative and has a trivial kernel for $\alpha > 0$ and the one-dimensional kernel spanned by $(\varphi, \zeta) = (\1, \0)$ for $\alpha = 0$. 
\end{proof} 

For $\alpha\ge 0$, let $\omega_{\alpha, 1}$ denote the fundamental (smallest positive) sloshing frequency of \eqref{eq:SloshCanal} with corresponding weak fundamental sloshing eigenfunction $(\varphi_{\alpha, 1}, \zeta_{\alpha, 1})$. We are now prepared to establish a variational characterization for $\omega_{\alpha, 1}$ which is inspired by \cite{tan:2017} and Lemma \ref{thm:Slosh2D_VC1}. Note that since $E_{\C, \alpha}$ is a homogeneous functional of degree 2, minimizing $E_{\C, \alpha}(\varphi, \zeta)/G_\C(\varphi, \zeta)$ over all nonzero functions $(\varphi, \zeta)$ satisfying $G_\C(\varphi, \zeta) > 0$ is equivalent to minimizing $E_{\C, \alpha}(\varphi, \zeta)$ over all functions $(\varphi, \zeta)$ satisfying the integral constraint $G_\C(\varphi, \zeta) = 1$. 

\begin{theorem}[Variational characterization, $\alpha > 0$] \label{thm:Slosh2D_VC2} 
Let $\C$ be a bounded Lipschitz domain in $\R^2$. For every $\alpha > 0$, there exists a weak fundamental sloshing eigenpair $(\omega_{\alpha, 1}, \varphi_{\alpha, 1}, \zeta_{\alpha, 1})$ of \eqref{eq:SloshCanal}, where $(\varphi_{\alpha, 1}, \zeta_{\alpha, 1})$ is a constrained minimizer of the following variational problem: 
\begin{equation} \label{eq:Slosh2D_VC2} 
\omega_{\alpha, 1}\coloneqq \inf_{(\varphi, \zeta)\in\H_\C} \left\{E_{\C, \alpha}(\varphi, \zeta)\colon G_\C(\varphi, \zeta) = 1\right\}. 
\end{equation} 
\end{theorem}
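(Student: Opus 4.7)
The plan is to solve the constrained variational problem by the direct method of the calculus of variations and then recover the eigenvalue formulation via Lagrange multipliers. First I would verify that the admissible set $\{(\varphi,\zeta)\in\H_\C\colon G_\C(\varphi,\zeta)=1\}$ is nonempty: pick any $f\in H^1(\C)$ whose trace on $\F_\C$ is not identically zero, choose $g\in H_0^1(-1,1)$ so that $\int_{\F_\C} fg\, dx > 0$, and rescale. Since $E_{\C,\alpha}\ge 0$ on $\H_\C$, the infimum in \eqref{eq:Slosh2D_VC2} is well-defined and nonnegative.

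The decisive quantitative fact for $\alpha>0$ is coercivity of $E_{\C,\alpha}$ on $\H_\C$. The bulk part controls $\min(1,\alpha^2)\,\|\varphi\|_{H^1(\C)}^2$, and for $\zeta\in H_0^1(-1,1)$ the Poincar\'e inequality shows the boundary part controls $c(\bond,\alpha)\,\|\zeta\|_{H^1(-1,1)}^2$; together, $E_{\C,\alpha}(\varphi,\zeta)\ge c_0\,\|(\varphi,\zeta)\|_{\H_\C}^2$ for some $c_0=c_0(\alpha,\bond)>0$. This coercivity is precisely what breaks down at $\alpha=0$, where constants have zero energy, motivating the restricted space $\H_{\C,0}$ used there.

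I would then execute the standard direct method. A minimizing sequence $(\varphi_n,\zeta_n)$ is bounded in $\H_\C$ by coercivity, so Banach--Alaoglu provides a weakly convergent subsequence $(\varphi_n,\zeta_n)\rightharpoonup (\varphi^*,\zeta^*)$. Convexity and continuity of $E_{\C,\alpha}$ imply weak lower semicontinuity. To pass to the limit in the constraint, I use compactness of the trace $H^1(\C)\to L^2(\F_\C)$ (valid on the bounded Lipschitz cross-section $\C$) together with the Rellich embedding of $H_0^1(-1,1)$ into $L^2(-1,1)$: these yield $\varphi_n\to\varphi^*$ strongly in $L^2(\F_\C)$ and $\zeta_n\to\zeta^*$ strongly in $L^2(-1,1)$, so $G_\C(\varphi_n,\zeta_n)\to G_\C(\varphi^*,\zeta^*)=1$. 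Combined with the lower semicontinuity estimate, $(\varphi^*,\zeta^*)$ is a constrained minimizer.

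Finally, since $DG_\C(\varphi^*,\zeta^*)[\varphi^*,\zeta^*]=2G_\C(\varphi^*,\zeta^*)=2\neq 0$, the constraint is regular at the minimizer and the Lagrange multiplier rule furnishes $\omega\in\R$ with $DE_{\C,\alpha}(\varphi^*,\zeta^*)[f,g]=\omega\, DG_\C(\varphi^*,\zeta^*)[f,g]$ for all $(f,g)\in\H_\C$; computing these derivatives gives exactly \eqref{eq:Slosh2D_weak}. Lemma~\ref{thm:Slosh2D_VC1} identifies $\omega=E_{\C,\alpha}(\varphi^*,\zeta^*)$, which is strictly positive because coercivity forces $E_{\C,\alpha}>0$ whenever $(\varphi^*,\zeta^*)\neq(0,0)$. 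Since any other positive eigenpair $(\omega',\varphi',\zeta')$ yields, after normalization by $\sqrt{G_\C(\varphi',\zeta')}$, an admissible competitor with energy $\omega'$, the minimum value is the \emph{fundamental} sloshing frequency $\omega_{\alpha,1}$. The main technical obstacle is the compactness of the boundary trace on the Lipschitz cross-section, which is the standard ingredient underlying both the passage to the weak limit in the constraint and the spectral interpretation of the minimizer.
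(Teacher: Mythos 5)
Your proposal is correct and mirrors the paper's own argument: establish coercivity and weak lower semicontinuity of $E_{\C,\alpha}$ on $\H_\C$, pass to the limit in the constraint $G_\C=1$ via compact trace and Rellich embeddings, invoke the Lagrange multiplier rule at a minimizer (noting $DG_\C$ is nondegenerate there), and identify the multiplier as the fundamental sloshing frequency using Lemma~\ref{thm:Slosh2D_VC1} together with a trial-function comparison against any other positive eigenpair. The paper delegates the coercivity and weak-closedness details to an earlier reference whereas you spell them out, but the route is the same (and your Poincar\'e step for the boundary energy is unnecessary, since both coefficients $1+\alpha^2/\bond$ and $1/\bond$ are already strictly positive).
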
 
\begin{proof} 
Fix $\alpha > 0$. Define the admissible set $M\coloneqq \left\{(\varphi, \zeta)\in\H_\C\colon G_\C(\varphi, \zeta) = 1\right\}$. The existence of a minimizer to \eqref{eq:Slosh2D_VC2} for $\alpha > 0$ follows from the direct method of the calculus of variations, as $M$ is weakly closed in $\H_\C$ and $E_{\C, \alpha}$ is weakly coercive and weakly lower semicontinuous on $M$ with respect to $\H_\C$; see \cite[Lemmas 3.5 and 3.6]{tan:2017} for similar proofs of these assertions. Let $(\varphi_\alpha^*, \zeta_\alpha^*)$ be a minimizer of \eqref{eq:Slosh2D_VC2}. It is straightforward to verify that both $E_{\C, \alpha}$ and $G_\C$ are continuously Fr\'{e}chet differentiable on $\H_\C$ and for any $(\varphi, \zeta)\in M$ we have 
$\<DG_\C(\varphi, \zeta), (\varphi, \zeta)\>_{\H_\C', \H_\C} = \int_{\F_\C} 2\varphi\zeta\, dx = 2\neq 0, $
where $\H_\C'$ is the dual space of $\H_\C$ and $\<\cdot, \cdot\>_{\H_\C', \H_\C}$ denotes the duality pairing between $\H_\C$ and $\H_\C'$. Thus the Lagrange multiplier rule applies and there exists a Lagrange multiplier $\mu_\alpha\in\R$ such that  
\begin{equation} \label{eq:Slosh2D_VC2a} 
\frac{d}{d\vareps}(E_{\C, \alpha} - \mu_\alpha G_\C)(\varphi_\alpha^* + \vareps f, \zeta_\alpha^* + \vareps g)\bigg|_{\vareps = 0} = 0 \ \ \tr{ for any $(f, g)\in\H_\C$}. 
\end{equation} 
A direct computation shows that \eqref{eq:Slosh2D_VC2a} is equivalent to $(\mu_\alpha, \varphi_\alpha^*, \zeta_\alpha^*)\in\R\times M$ satisfying \eqref{eq:Slosh2D_weak} for all $(f, g)\in\H_\C$. Moreover, Lemma \ref{thm:Slosh2D_VC1} together with $G_\C(\varphi_\alpha^*, \zeta_\alpha^*) = 1$ gives $\mu_\alpha = E_{\C, \alpha}(\varphi_\alpha^*, \zeta_\alpha^*) > 0$. It remains to show that $E_{\C, \alpha}(\varphi_\alpha^*, \zeta_\alpha^*)$ is the fundamental sloshing frequency, but this follows immediately by choosing any weak sloshing eigenfunction $(\varphi_\alpha, \zeta_\alpha)\in M$ as a trial function in \eqref{eq:Slosh2D_VC2} and applying Lemma \ref{thm:Slosh2D_VC1}. 
\end{proof} 

\begin{theorem}[Variational characterization, $\alpha = 0$] \label{thm:Slosh2D_VC3} 
Let $\C$ be a bounded Lipschitz domain in $\R^2$. There exists a weak fundamental sloshing eigenpair $(\omega_{0, 1}, \varphi_{0, 1}, \zeta_{0, 1})$ of \eqref{eq:SloshCanal} for $\alpha = 0$, where $(\varphi_{0, 1}, \zeta_{0, 1})$ is a constrained minimizer of the following variational problem: 
\begin{equation} \label{eq:Slosh2D_VC3} 
\omega_{0, 1}\coloneqq \inf_{(\varphi, \zeta)\in\H_{\C, 0}} \left\{E_{\C, 0}(\varphi, \zeta)\colon G_\C(\varphi, \zeta) = 1\right\}. 
\end{equation} 
\end{theorem}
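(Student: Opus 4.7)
The proof follows the structure of Theorem \ref{thm:Slosh2D_VC2} with one important wrinkle. For $\alpha = 0$, the energy $E_{\C, 0}$ has a nontrivial kernel on $\H_{\C, 0}$ spanned by $(\1, \0)$ (note that $(\1, \0) \in \H_{\C, 0}$), and both $E_{\C, 0}$ and $G_\C$ are invariant under the shift $(\varphi, \zeta) \mapsto (\varphi + c, \zeta)$ for every $c \in \R$---the invariance of $G_\C$ uses precisely the mean-zero constraint $\int_{-1}^1 \zeta\, dx = 0$ that defines $\H_{\C, 0}$. To remove this degeneracy I would work on the closed subspace
\[
\tilde\H_{\C, 0} \coloneqq \Big\{(\varphi, \zeta) \in \H_{\C, 0} \colon \int_\C \varphi\, dA = 0\Big\},
\]
observing that by shift invariance the infimum in \eqref{eq:Slosh2D_VC3} is unchanged when it is taken over $\tilde\H_{\C, 0}$ instead of $\H_{\C, 0}$.

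On $\tilde\H_{\C, 0}$, the Poincar\'{e}--Wirtinger inequality gives $\|\varphi\|_{L^2(\C)} \le C\|\nabla_\C\varphi\|_{L^2(\C)}$, so $E_{\C, 0}$ controls the full $\H_\C$-norm and is weakly coercive, while weak lower semicontinuity is immediate since $E_{\C, 0}$ is a sum of squared seminorms. The admissible set $\tilde M_0 \coloneqq \{(\varphi, \zeta) \in \tilde\H_{\C, 0} \colon G_\C(\varphi, \zeta) = 1\}$ is weakly closed because the compactness of the trace $H^1(\C) \to L^2(\F_\C)$ (using Lipschitz $\C$) and the Rellich embedding $H_0^1(-1, 1) \hookrightarrow L^2(-1, 1)$ together make $G_\C$ weakly continuous; $\tilde M_0$ is non-empty by a direct construction (take any mean-zero $\zeta \in H_0^1(-1, 1)$ paired with a suitable $\varphi$, then rescale). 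The direct method of the calculus of variations then yields a minimizer $(\varphi_0^*, \zeta_0^*) \in \tilde M_0$.

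The Lagrange multiplier rule applies exactly as in Theorem \ref{thm:Slosh2D_VC2}: since $\langle DG_\C(\varphi_0^*, \zeta_0^*), (\varphi_0^*, \zeta_0^*)\rangle = 2 \neq 0$, there is $\mu_0 \in \R$ with $D(E_{\C, 0} - \mu_0 G_\C)(\varphi_0^*, \zeta_0^*) = 0$ on $\tilde\H_{\C, 0}$. To promote this identity to test functions $(f, g) \in \H_{\C, 0}$ (so as to match the weak formulation \eqref{eq:Slosh2D_weak0}), I would split $f = (f - \bar f) + \bar f$ with $\bar f$ the mean of $f$ over $\C$; the constant part contributes zero to both sides of \eqref{eq:Slosh2D_weak0} because $\nabla_\C \bar f \equiv 0$ and $\zeta_0^*$ has zero mean on $(-1, 1)$. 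Lemma \ref{thm:Slosh2D_VC1} combined with $G_\C(\varphi_0^*, \zeta_0^*) = 1$ then yields $\mu_0 = E_{\C, 0}(\varphi_0^*, \zeta_0^*) > 0$. Finally, to identify $\mu_0$ with $\omega_{0, 1}$: for any weak sloshing eigenpair $(\omega_0, \varphi_0, \zeta_0)$ with $\omega_0 > 0$, Lemma \ref{thm:Slosh2D_VC1} gives $G_\C(\varphi_0, \zeta_0) > 0$, so after subtracting the mean of $\varphi_0$ and rescaling it lies in $\tilde M_0$, and Lemma \ref{thm:Slosh2D_VC1} again gives $\omega_0 = E_{\C, 0}(\varphi_0, \zeta_0) \ge \mu_0$. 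The main obstacle is the degeneracy of $E_{\C, 0}$ along constants in $\varphi$, which is resolved by the normalization defining $\tilde\H_{\C, 0}$; everything else is a faithful rerun of Theorem \ref{thm:Slosh2D_VC2}.
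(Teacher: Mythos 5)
Your proposal is correct and follows the same overall strategy as the paper: fix the constant gauge freedom in $\varphi$ by a linear normalization, run the direct method on the reduced space, apply the Lagrange multiplier rule, and promote the Euler--Lagrange identity to all test pairs in $\H_{\C, 0}$ by splitting off the constant part of $f$ (which drops out of both sides because $\nabla_\C$ annihilates constants and $\widehat\zeta$ has zero mean). The one cosmetic difference is your choice of gauge: you impose $\int_\C \varphi\, dA = 0$ and invoke the bulk Poincar\'{e}--Wirtinger inequality, whereas the paper imposes $\int_{\F_\C}\varphi\, dx = 0$ (zero trace mean on the free surface) via the projection $P\varphi = \varphi - \frac{1}{|\F_\C|}\int_{\F_\C}\varphi\, dx$, reusing the coercivity machinery from \cite[Theorem~1.1]{tan:2017}. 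Either normalization removes the degeneracy, leaves the infimum unchanged by shift invariance, and yields the same eigenvalue; the paper's trace-mean choice is slightly more natural for a Steklov-type problem (the spectral parameter lives on $\F_\C$), while yours invokes the more textbook version of the Poincar\'{e} inequality, but the two arguments are interchangeable.
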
 
\begin{proof} 
Define the admissible set $M\coloneqq \{(\varphi, \zeta)\in\H_{\C, 0}\colon G_\C(\varphi, \zeta) = 1\}$, its subset $N\coloneqq M\cap\{\varphi\colon \int_{\F_\C} \varphi\, dx = 0\}$, and the operator $P\varphi\coloneqq \varphi - \frac{1}{|\F_\C|}\int_{\F_\C} \varphi\, dx$. We claim that a minimizer of \eqref{eq:Slosh2D_VC3} is given by $(\widehat\varphi, \widehat\zeta)$, where $(\widehat\varphi, \widehat\zeta)$ is a constrained minimizer of $E_{\C, 0}$ over $N$. Indeed, since $(P\varphi, \zeta)\in N$ for any $(\varphi, \zeta)\in M$, we obtain 
\begin{align*} 
E_{\C, 0}(\varphi, \zeta) = E_{\C, 0}(P\varphi, \zeta) \ge E_{\C, 0}(\widehat\varphi, \widehat\zeta) \ \ \textrm{ for any $(\varphi, \zeta)\in M$}. 
\end{align*} 
The existence of $(\widehat\varphi, \widehat\zeta)$ follows from the arguments given in \cite[Theorem 1.1]{tan:2017}, in particular we have that $(\widehat\mu, \widehat\varphi, \widehat\zeta)$ with $\widehat\mu = E_{\C, 0}(\widehat\varphi, \widehat\zeta)$ satisfies \eqref{eq:Slosh2D_weak0} for all $(\widehat f, \widehat g)\in\H_{\C, 0}\cap\{\widehat f\colon \int_{\F_\C} \widehat f\, dx = 0\}$. Choosing $(\widehat f, \widehat g) = (Pf, g)$ for any $(f, g)\in \H_{\C, 0}$ and using $\int_{-1}^1 \widehat\zeta\, dx = 0$, we find $(\widehat\mu, \widehat\varphi, \widehat\zeta)$ satisfies \eqref{eq:Slosh2D_weak0} for all $(f, g)\in\H_{\C, 0}$, \ie $(\widehat\mu, \widehat\varphi, \widehat\zeta)$ is a weak sloshing eigenpair of \eqref{eq:SloshCanal} for $\alpha = 0$. Finally, a similar argument from Theorem \ref{thm:Slosh2D_VC2} shows that $\widehat\mu$ is the fundamental sloshing frequency and we omit the proof for brevity. 
\end{proof} 

\begin{remark} 
Although we have a family of minimizers $(\widehat\varphi + c, \widehat\zeta)$ to \eqref{eq:Slosh2D_VC3} for any $c\in\R$, \eqref{eq:SloshC4} shows that $c = -\frac{1}{\omega_{0, 1}\bond}\int_{-1}^1 \widehat\zeta''\, dx$ for sufficiently regular $(\widehat\varphi, \widehat\zeta)$. 
\end{remark}


\subsection{Isoperimetric sloshing problem on shallow canals} \label{sec:Iso2D} 
Let us now restrict our attention to shallow canals. Recall the class of admissible shape functions for shallow canals 
\[ \M_A = \left\{h\in \PC^1[-1, 1]\colon \tr{$h\ge 0$ on $[-1, 1]$; $\int_{-1}^1 h\, dx = A$}\right\}. \] 
Appealing to the shallow water theory, we may assume that $\varphi$ is independent of the depth, \ie $\varphi(x, z) = \psi(x)$. Define the Hilbert space $\H\coloneqq H^1(-1, 1)\times H_0^1(-1, 1)$ and its closed subspace $\H_0\coloneqq \H\cap \{\zeta\colon \int_{-1}^1 \zeta\, dx = 0\}$. Following \cite{lawrence:1958,troesch:1965} and thanks to Theorems \ref{thm:Slosh2D_VC2} and \ref{thm:Slosh2D_VC3}, we may approximate $\omega_{\alpha, 1}$ as the infimum of the following one-dimensional constrained variational problem: 
\begin{equation} \label{eq:Slosh2D_VCs} 
\omega_{\alpha, 1}(h)\approx\Omega_{\alpha, 1}(h)\coloneqq \inf_{(\psi, \zeta)\in\X_\alpha}\left\{E_\alpha(h; \psi, \zeta)\colon G(\psi, \zeta) = 1\right\}, 
\end{equation} 
where $\X_0\coloneqq \H_0$, $\X_\alpha\coloneqq \H$ for every $\alpha > 0$, $G(\psi, \zeta)\coloneqq \int_{-1}^1 \psi\zeta\, dx$, and 
\begin{align*} 
E_\alpha(h; \psi, \zeta) & \coloneqq \underbrace{\frac{1}{2}\int_{-1}^1 h\Big[(\psi')^2 + \alpha^2\psi^2\Big]\, dx}_{D_{\C, \alpha}(\psi) \coloneqq D_\alpha(h; \psi)} + \frac{1}{2}\int_{-1}^1 \left[\left(1 + \frac{\alpha^2}{\bond}\right)\zeta^2 + \frac{1}{\bond}(\zeta')^2\right] dx. 
\end{align*} 

We then define the weak formulation of the one-dimensional pinned-edge shallow sloshing problem for every $\alpha\ge 0$ on $[-1, 1]$ as the weak form of the Euler-Lagrange equations of the constrained variational problem \eqref{eq:Slosh2D_VCs}. 

\begin{definition} \label{def:Slosh2D_weak_s} 
Given $\alpha\ge 0$, a weak shallow sloshing eigenpair $(\Omega_\alpha, \psi_\alpha, \zeta_\alpha)\in\R\times\X_\alpha$ satisfies the following equation for all $(f, g)\in\X_\alpha$: 
\begin{align*} 
\int_{-1}^1 \bigg[h\Big(\psi_\alpha'f' + \alpha^2\psi_\alpha f\Big) + \bigg(1 + \frac{\alpha^2}{\bond}\bigg)\zeta_\alpha g + \frac{1}{\bond}\zeta_\alpha'g'\bigg]\, dx = \Omega_\alpha\int_{-1}^1 (\zeta_\alpha f + \psi_\alpha g)\, dx. 
\end{align*} 
\end{definition} 

The corresponding one-dimensional boundary eigenvalue problem for every $\alpha\ge 0$ is given by 
\begin{subequations}\label{eq:SloshCanalS} 
\begin{alignat}{2} 
\label{eq:SloshCS1} -(h\psi')' + \alpha^2h\psi & = \Omega\zeta && \ \ \tr{ on $(-1, 1)$}, \\ 
\label{eq:SloshCS2} \left(1 + \frac{\alpha^2}{\bond}\right)\zeta - \frac{1}{\bond}\zeta'' & = \Omega\psi && \ \ \tr{ on $(-1, 1)$}, \\ 
\label{eq:SloshCS3} \left(h\psi'\right)(\pm 1) = \zeta(\pm 1) & = 0. 
\end{alignat} 
\end{subequations} 
For $\alpha = 0$, we must impose the necessary condition $\int_{-1}^1 \zeta\, dx = 0$ as well. Note that the governing equations \eqref{eq:SloshCS1} and \eqref{eq:SloshCS2} can also be derived from averaging \eqref{eq:SloshCanal} from $z = -h(x)$ to $z = 0$ and collecting $\O(h)$ terms. With this derivation, $\psi(x)$ is identified as $\varphi(x, 0)$ instead; see \cite[Section 10.13]{Stoker}. 


We now state the \emph{isoperimetric sloshing problem for shallow canals:} For every $\alpha\ge 0$, find the shape function $h\in\M_A$ that maximizes $\Omega_{\alpha, 1}$, \ie solve 
\begin{equation} \label{eq:Iso2D} 
 \sup_{h\in\M_A} \Omega_{\alpha, 1}(h). 
\end{equation} 
Our approach in solving \eqref{eq:Iso2D} is based on the following simple observation. Let $(\psi_\alpha^*, \zeta_\alpha^*)$ be an admissible pair of trial functions in \eqref{eq:Slosh2D_VCs} satisfying 
\begin{equation}\label{eq:Iso2D_opt} 
\left(\left(\psi_\alpha^*\right)'\right)^2 + \alpha^2(\psi_\alpha^*)^2 = c_\alpha^2 \ \ \tr{ on $(-1, 1)$} 
\end{equation} 
for some constant $c_\alpha\neq 0$. Then we obtain a simple upper bound for $\Omega_{\alpha, 1}(h)$: 
\begin{equation}\label{eq:Iso2D_opt1} 
\Omega_{\alpha, 1}(h)\le \frac{1}{2}\int_{-1}^1 hc_\alpha^2\, dx + S_{\C, \alpha}(\zeta_\alpha^*) = \frac{c_\alpha^2A}{2} + S_{\C, \alpha}(\zeta_\alpha^*) \ \ \tr{ for any $h\in\M_A$}. 
\end{equation} 
Moreover, equality holds in \eqref{eq:Iso2D_opt1} only if $(\psi_\alpha^*, \zeta_\alpha^*)$ is a fundamental shallow sloshing eigenfunction of \eqref{eq:SloshCanalS} associated with some admissible shape function $h_\alpha^*\in\M_A$. This suggests the following strategy in solving \eqref{eq:Iso2D}: 
\begin{enumerate} 
\item Solve \eqref{eq:Iso2D_opt} for $\psi_\alpha^*$ and substitute $\psi_\alpha^*$ into \eqref{eq:SloshCS2} and \eqref{eq:SloshCS3} to solve for $\zeta_\alpha^*$. 
\item Substitute $(\psi_\alpha^*, \zeta_\alpha^*)$ from Step 1 into \eqref{eq:SloshCS1} and \eqref{eq:SloshCS3} to solve for $h_\alpha^*$. 
\item Compute $\Omega_{\alpha, 1}^*\coloneqq \Omega_{\alpha, 1}(h_\alpha^*)$ using $\int_{-1}^1 h_\alpha^*\, dx = A$. Then check that $h_\alpha^*\in\M_A$. 
\item Check that $(\Omega_{\alpha, 1}^*, \psi_\alpha^*, \zeta_\alpha^*)$ is a fundamental shallow sloshing eigenpair of \eqref{eq:SloshCanalS}. This is numerically verified using the finite difference method with a standard second-order central difference. 
\end{enumerate} 
Note that from \eqref{eq:Iso2D_opt}, $\psi_\alpha^*$ is proportional to $c_\alpha$ and it follows from \eqref{eq:SloshCS2} that $\zeta_\alpha^*$ is also proportional to $c_\alpha$. Since we are only interested in $\Omega_{\alpha, 1}^*$ and \eqref{eq:Slosh2D_VCs} is equivalent to minimizing $E_\alpha/|G|$ over all nonzero functions $(\psi, \zeta)\in\X_\alpha$, we may choose $c_\alpha$ to be any positive real number in \eqref{eq:Iso2D_opt}, \ie the amplitude of $\psi_\alpha^*$ is irrelevant. We summarize these observations in the following theorem. 

\begin{theorem} \label{thm:Iso2D_opt} 
Given $\alpha\ge 0$, if there exists $h_\alpha^*\in\M_A$ such that $(\Omega_{\alpha, 1}(h_\alpha^*), \psi_\alpha^*, \zeta_\alpha^*)\in\R\times\X_\alpha$ is a fundamental weak shallow sloshing eigenpair with $\psi_\alpha^*$ satisfying \eqref{eq:Iso2D_opt} for some $c_\alpha^2\neq 0$, then $h_\alpha^*$ is a solution of \eqref{eq:Iso2D}, \ie $h_\alpha^*$ is a maximizer of $\Omega_{\alpha, 1}$. 
\end{theorem}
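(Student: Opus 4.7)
The plan is to exploit the variational characterization \eqref{eq:Slosh2D_VCs} by using the hypothesized pair $(\psi_\alpha^*, \zeta_\alpha^*)$ as a common trial function for \emph{every} $h \in \M_A$. The key observation, already flagged in the discussion preceding the theorem, is that the pointwise identity \eqref{eq:Iso2D_opt} collapses the $h$-dependent Dirichlet term of $E_\alpha$ into a multiple of the area constraint $\int_{-1}^1 h\,dx = A$, eliminating all $h$-dependence from the resulting upper bound. Thus the right-hand side is the same constant for every admissible depth profile, and this constant is automatically realized by $h_\alpha^*$ via the eigenpair hypothesis.

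Concretely, first I would normalize: since $(\psi_\alpha^*, \zeta_\alpha^*)$ is the fundamental weak shallow sloshing eigenfunction for $h_\alpha^*$, the shallow-water analogue of Lemma \ref{thm:Slosh2D_VC1} forces $G(\psi_\alpha^*, \zeta_\alpha^*) > 0$. Rescaling both components by a common positive constant (which preserves \eqref{eq:Iso2D_opt} after replacing $c_\alpha^2$ by another positive constant of the same name) gives $G(\psi_\alpha^*, \zeta_\alpha^*) = 1$. Admissibility $(\psi_\alpha^*,\zeta_\alpha^*)\in \X_\alpha$ is preserved by rescaling, so in the $\alpha = 0$ case the zero-mean condition on $\zeta_\alpha^*$ is inherited automatically from the hypothesis.

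Next, for an arbitrary $h \in \M_A$, the trial pair $(\psi_\alpha^*, \zeta_\alpha^*)$ is admissible in \eqref{eq:Slosh2D_VCs}, so
\begin{equation*}
\Omega_{\alpha, 1}(h) \;\le\; E_\alpha(h;\psi_\alpha^*,\zeta_\alpha^*) \;=\; \frac{1}{2}\int_{-1}^1 h\,\Big[(\psi_\alpha^{*\prime})^2 + \alpha^2(\psi_\alpha^*)^2\Big]\,dx \;+\; S_{\C,\alpha}(\zeta_\alpha^*).
\end{equation*}
Substituting \eqref{eq:Iso2D_opt} and using $h\in\M_A$ gives
\begin{equation*}
\Omega_{\alpha, 1}(h) \;\le\; \frac{c_\alpha^2}{2}\int_{-1}^1 h\,dx + S_{\C,\alpha}(\zeta_\alpha^*) \;=\; \frac{c_\alpha^2 A}{2} + S_{\C,\alpha}(\zeta_\alpha^*),
\end{equation*}
a bound independent of $h$. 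Finally, applying the same computation with $h = h_\alpha^*$, and invoking the hypothesis that $(\psi_\alpha^*,\zeta_\alpha^*)$ realizes the infimum in \eqref{eq:Slosh2D_VCs} for $h_\alpha^*$, turns the inequality into an equality, so the constant on the right equals $\Omega_{\alpha,1}(h_\alpha^*)$. Chaining the two displays yields $\Omega_{\alpha,1}(h) \le \Omega_{\alpha,1}(h_\alpha^*)$ for every $h \in \M_A$, proving that $h_\alpha^*$ solves \eqref{eq:Iso2D}.

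There is no serious obstacle; the content of the argument is a one-line trial-function estimate enabled by \eqref{eq:Iso2D_opt}. The only bookkeeping points are (i) the rescaling to $G = 1$, which requires Lemma \ref{thm:Slosh2D_VC1} to guarantee a positive sign for $G(\psi_\alpha^*,\zeta_\alpha^*)$, and (ii) checking admissibility of the trial pair when $\alpha = 0$, both of which are immediate from the stated hypotheses. The real work of the paper lies not in this theorem but in \emph{finding} $h_\alpha^*$: solving the first-order ODE \eqref{eq:Iso2D_opt} for $\psi_\alpha^*$, using \eqref{eq:SloshCS2}--\eqref{eq:SloshCS3} to determine $\zeta_\alpha^*$, then recovering $h_\alpha^*$ from \eqref{eq:SloshCS1} and verifying that the result lies in $\M_A$ and that the resulting eigenpair is indeed fundamental.
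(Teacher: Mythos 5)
Your proof is correct and follows essentially the same argument the paper gives in the discussion surrounding \eqref{eq:Iso2D_opt}--\eqref{eq:Iso2D_opt1}: test every $h\in\M_A$ in the variational characterization \eqref{eq:Slosh2D_VCs} with the fixed pair $(\psi_\alpha^*,\zeta_\alpha^*)$, use \eqref{eq:Iso2D_opt} to make the $h$-dependent term collapse to $c_\alpha^2 A/2$, and observe that equality is attained at $h_\alpha^*$. The only point you gloss over is that Lemma~\ref{thm:Slosh2D_VC1} alone gives a sign implication conditional on $G\neq 0$; to rule out $G=0$ you should note that $D_\alpha(h_\alpha^*;\psi_\alpha^*)=c_\alpha^2 A/2>0$ (using $c_\alpha^2\neq 0$ and $A>0$), so $E_\alpha>0$ and hence $G=E_\alpha/\Omega_{\alpha,1}(h_\alpha^*)>0$, but this is a minor bookkeeping detail.
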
 

\begin{remark} \label{rm:Iso2D_opt} 
Formally, \eqref{eq:Iso2D_opt} can be interpreted as the first-order optimality condition for \eqref{eq:Iso2D}. Suppose $\Omega_{\alpha, 1}(h_\alpha^*)$ is differentiable. For sufficiently small $\vareps > 0$, consider the family of functions $h_\alpha(x; \vareps) = h_\alpha^*(x) + \vareps v(x)$ for any piecewise smooth variation $v(x)$ satisfying $h_\alpha(x; \vareps)\ge 0$ on $[-1, 1]$ and $\int_{-1}^1 h_\alpha(x; \vareps)\, dx = A$. Differentiating the area constraint with respect to $\vareps$ yields $\int_{-1}^1 v\,dx = 0$. For every $\vareps > 0$, let $(\Omega_{\alpha, 1}(\vareps), \psi_\alpha(x; \vareps), \zeta_\alpha(x; \vareps))$ denote a fundamental weak shallow sloshing eigenpair associated with $h_\alpha(x; \vareps)$ and define $\od{\Omega_\alpha}\coloneqq \frac{d\Omega_{\alpha, 1}}{d\vareps}$. Differentiating the weak formulation with respect to $\vareps$, we see that the following holds for all $(f, g)\in\X_\alpha$: 
\begin{align} 
& \int_{-1}^1 \left[h_\alpha\left(\od{\psi_\alpha}'f' + \alpha^2\od{\psi_\alpha}f\right) + \left(1 + \frac{\alpha^2}{\bond}\right)\od{\zeta_\alpha}g + \frac{1}{\bond}\od{\zeta_\alpha}'g'\right] dx \label{eq:Iso2D_opt2} \\ 
& + \int_{-1}^1 v\left(\psi_\alpha'f' + \alpha^2\psi_\alpha f\right) dx = \Omega_\alpha\int_{-1}^1 \left(\od{\zeta_\alpha}f + \od{\psi_\alpha}g\right) dx + \od{\Omega_\alpha}\int_{-1}^1 (\zeta_\alpha f + \psi_\alpha g) dx. \notag 
\end{align} 
Choosing $(f, g) = \left(\psi_\alpha(x; 0), \zeta_\alpha(x; 0)\right)$ in \eqref{eq:Iso2D_opt2} and $(f, g) = (\od{\psi_\alpha}, \od{\zeta_\alpha})$ in Definition \ref{def:Slosh2D_weak_s} corresponding to $\left(\Omega_{\alpha, 1}(0), \psi_\alpha(x; 0), \zeta_\alpha(x; 0)\right)$, setting $\vareps = 0$, and using the assumption that $\od{\Omega_\alpha}(0) = 0$, we are left with 
\begin{equation} \label{eq:Iso2D_opt3} 
\int_{-1}^1 v\left[(\psi_\alpha'(x; 0))^2 + \alpha^2\psi_\alpha^2(x; 0)\right] dx = 0. 
\end{equation} 
This yields \eqref{eq:Iso2D_opt} since \eqref{eq:Iso2D_opt3} must hold for all $v$ satisfying $\int_{-1}^1 v\, dx = 0$. 
\end{remark} 

Theorems \ref{thm:Iso2D_zero}-\ref{thm:Iso2D_1} tell us that in the absence or presence of surface tension, the maximizing cross-section $h_\alpha^*$ for every $\alpha\ge 0$ is symmetric. We now show that this can be interpreted as a consequence of the concavity of the map $h\mapsto\Omega_{\alpha, 1}(h)$.  

\begin{theorem}\label{thm:Iso2D_sym} 
Given $\alpha\ge 0$, the map $h\mapsto\Omega_{\alpha, 1}(h)$ is concave on $\M_A$. As a consequence, if there exists a maximizer of $\Omega_{\alpha, 1}$, then there exists a symmetric maximizer of $\Omega_{\alpha, 1}$ too. 
\end{theorem}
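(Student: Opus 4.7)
The plan is to exploit the variational characterization \eqref{eq:Slosh2D_VCs}. The key observation is that for any fixed admissible trial pair $(\psi, \zeta)\in\X_\alpha$ with $G(\psi, \zeta) = 1$, the energy splits as
\[
E_\alpha(h; \psi, \zeta) = \tfrac{1}{2}\int_{-1}^1 h\bigl[(\psi')^2 + \alpha^2\psi^2\bigr]\, dx + S_{\C, \alpha}(\zeta),
\]
which is \emph{affine} in $h$: only the kinetic term $D_\alpha(h; \psi)$ depends on $h$, and it is linear in $h$, while $S_{\C, \alpha}(\zeta)$ contributes a constant. Therefore $h \mapsto \Omega_{\alpha, 1}(h)$ is the pointwise infimum (over the admissible set $\{(\psi,\zeta)\in\X_\alpha\colon G(\psi,\zeta)=1\}$) of a family of affine functions of $h$, which is concave. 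The set $\M_A$ itself is trivially convex (pointwise convex combinations preserve nonnegativity, piecewise $C^1$ regularity, and the area constraint), so concavity is a well-posed assertion on $\M_A$.

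For the symmetric-maximizer claim, I would use the reflection $\tau h(x) \coloneqq h(-x)$. A change of variables $x\mapsto -x$ in the infimum \eqref{eq:Slosh2D_VCs} shows $\Omega_{\alpha, 1}(\tau h) = \Omega_{\alpha, 1}(h)$ for every $h\in\M_A$, since the map $(\psi,\zeta)\mapsto(\psi(-\cdot),\zeta(-\cdot))$ is a bijection of $\X_\alpha$ preserving both $E_\alpha(h;\cdot,\cdot)$ and $G$. Hence if $h^*\in\M_A$ maximizes $\Omega_{\alpha,1}$, so does $\tau h^*$. Setting $h^s \coloneqq \tfrac{1}{2}(h^* + \tau h^*)\in\M_A$, which is symmetric by construction, concavity gives
\[
\Omega_{\alpha, 1}(h^s) \ge \tfrac{1}{2}\Omega_{\alpha, 1}(h^*) + \tfrac{1}{2}\Omega_{\alpha, 1}(\tau h^*) = \Omega_{\alpha, 1}(h^*),
\]
and equality must hold because $h^*$ is a maximizer. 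Thus $h^s$ is a symmetric maximizer.

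The argument is essentially structural and I do not anticipate a serious obstacle; the only point demanding a moment's care is the invariance of $\Omega_{\alpha,1}$ under $\tau$ and the invariance of the admissible class $\X_\alpha$ (the integral constraint $\int_{-1}^1\zeta\,dx=0$ defining $\X_0 = \H_0$ is manifestly preserved by $\zeta\mapsto\zeta(-\cdot)$, and the boundary conditions $\zeta(\pm 1)=0$ are symmetric), so the reflection symmetry genuinely applies in both the $\alpha>0$ and $\alpha=0$ cases. The concavity step itself is a direct application of the general fact that an infimum of affine functions is concave, requiring no additional compactness or regularity input beyond what is already used in Theorems \ref{thm:Slosh2D_VC2} and \ref{thm:Slosh2D_VC3}.
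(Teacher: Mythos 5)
Your argument is correct and is essentially the same as the paper's: concavity is obtained from $\Omega_{\alpha,1}$ being an infimum of functions affine in $h$ (the paper states this via the ratio $E_\alpha/|G|$, which is equivalent to your constrained form since both functionals are homogeneous of degree two), and the symmetric maximizer is produced by averaging a maximizer with its reflection and invoking concavity together with reflection invariance. The extra checks you record — convexity of $\M_A$, invariance of $\X_\alpha$ under $x\mapsto -x$, and the boundary/constraint symmetry — are the correct details that the paper leaves implicit.
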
 
\begin{proof} 
Fix $\alpha\ge 0$. The concavity follows from the variational characterization of $\Omega_{\alpha, 1}$ (see \eqref{eq:Slosh2D_VCs}), as $\M_A$ is convex and the map $h\mapsto\Omega_{\alpha, 1}(h)$ is the infimum of the family of affine functions $\{h\mapsto (E_\alpha/|G|)(h; \psi, \zeta)\}_{(\psi, \zeta)}$. Next, suppose $h_1\in\M_A$ is a maximizer of $\Omega_{\alpha, 1}$. Define $h_2(x)\coloneqq h_1(-x)\in\M_A$ and $h_3\coloneqq (h_1 + h_2)/2\in\M_A$, the latter which is symmetric. It is clear from \eqref{eq:Slosh2D_VCs} that $\Omega_{\alpha, 1}(h_2) = \Omega_{\alpha, 1}(h_1)$. By concavity of the map $h\mapsto\Omega_{\alpha, 1}(h)$, we get 
\begin{align*}
\Omega_{\alpha, 1}(h_3) \ge \frac{1}{2}\Omega_{\alpha, 1}(h_1) + \frac{1}{2}\Omega_{\alpha, 1}(h_2) = \Omega_{\alpha, 1}(h_1). 
\end{align*} 
\end{proof}


\subsection{Zero surface tension: proof of Theorem \ref{thm:Iso2D_zero}} \label{sec:Iso2D_zero} 
In the absence of surface tension, the corresponding shallow sloshing problem is obtained from \eqref{eq:SloshCanalS} by formally setting $\bond = \infty$. Decoupling the equations, we see that $(\Omega_\alpha^2, \psi_\alpha)\in\R\times H^1(-1, 1)$ satisfies the following Sturm-Liouville problem: 
\begin{equation}\label{eq:SloshCanalS_zero} 
\begin{alignedat}{1} 
-(h\psi_\alpha')' + \alpha^2h\psi_\alpha & = \Omega_\alpha^2\psi_\alpha \coloneqq \lambda_\alpha^\infty\psi_\alpha \ \ \tr{ on $(-1, 1)$}, \\ 
(h\psi_\alpha')(\pm 1) & = 0. 
\end{alignedat} 
\end{equation} 
It is not difficult to see that for every $\alpha\ge 0$, the squared fundamental sloshing frequency $\lambda_{\alpha, 1}^\infty$ admits the following variational characterization: 
\begin{equation} \label{eq:Slosh2D_zero_VCs} 
\lambda_{\alpha, 1}^\infty(h) = \inf_{\psi\in H^1(-1, 1)}\left\{2D_\alpha(h; \psi)\colon \int_{-1}^1 \psi^2\, dx = 1\right\}. 
\end{equation} 
Troesch proved that the maximizing cross-section for $\alpha = 0$ is a parabola with squared maximal sloshing frequency $\lambda_{0, 1}^{\infty, *}\coloneqq 3A/2$ \cite{troesch:1965}. We now show that the maximizing cross-section for $\alpha > 0$ is a rectangle $h_\alpha^{\infty, *} = A/2$. 

\begin{proof}[\tb{Proof of Theorem \ref{thm:Iso2D_zero}}]
For any $\alpha > 0$, choosing $\psi_\alpha = 1/\sqrt{2}$ as an admissible trial function in \eqref{eq:Slosh2D_zero_VCs} yields 
\begin{align} \label{eq:Iso2D_zero1} 
\lambda_{\alpha, 1}^\infty(h) \le \int_{-1}^1 \frac{h\alpha^2}{2}\, dx = \frac{\alpha^2A}{2} \ \ \textrm{ for any $h\in\M_A$.} 
\end{align} 
To see that equality holds in \eqref{eq:Iso2D_zero1} for $h = h_\alpha^{\infty, *} = A/2$, we substitute $h = A/2$ into \eqref{eq:SloshCanalS_zero} and rearrange to obtain 
\begin{align*} 
\psi_\alpha'' + \left(\frac{2\lambda_\alpha^\infty}{A} - \alpha^2\right)\psi_\alpha = 0 \ \ \tr{ on $(-1, 1)$}; \ \ \psi_\alpha'(\pm 1) = 0. 
\end{align*} 
It is clear that $(\lambda_\alpha^\infty, \psi_\alpha) = (\alpha^2A/2, c)$ is the fundamental eigenpair for any nonzero constant $c$. 
\end{proof}


\subsection{Finite surface tension: proof of Theorems \ref{thm:Iso2D_0} and \ref{thm:Iso2D_1}} \label{sec:Iso2D_OC} 
Throughout this subsection, for any given $\alpha > 0$ we denote by $h$ the maximizing cross-section, $(\Omega, \psi, \zeta)$ its corresponding fundamental shallow sloshing eigenpair, and $\lambda\coloneqq \Omega^2$ for notational convenience. 

\begin{proof}[\tb{Proof of Theorem \ref{thm:Iso2D_0}}] 
Set $\alpha = 0$. Choosing $c_0 = 1$ in \eqref{eq:Iso2D_opt}, one such solution is $\psi(x) = x + d_0$ for some $d_0\in\R$. We first solve for $\zeta$. Define $\kappa\coloneqq \sqrt{\bond}$. Substituting $\psi(x) = x + d_0$ into \eqref{eq:SloshCS2} for $\alpha = 0$ and rearranging yield 
\begin{equation*} 
\zeta'' - \kappa^2\zeta = -\Omega\kappa^2\left(x + d_0\right) \ \ \tr{ on $(-1, 1)$}.   
\end{equation*} 
Together with $\zeta(\pm 1) = 0$ and $\int_{-1}^1 \zeta\, dx = 0$, the solution is given by
\[ \zeta(x) = \Omega \left(x - \frac{\sinh(\kappa x)}{\sinh\kappa}\right). \] 
Next we solve for $h$. Substituting $\psi$ and $\zeta$ into \eqref{eq:SloshCS1} for $\alpha = 0$ and \eqref{eq:SloshCS3} yield 
\[ h' = -\lambda\left[x - \frac{\sinh(\kappa x)}{\sinh\kappa}\right]; \ \ h(\pm 1) = 0. \] 
The solution is given by
\[ h(x) = \frac{\lambda}{2}(1 - x^2) - \frac{\lambda}{\kappa\sinh\kappa}\Big(\cosh\kappa - \cosh(\kappa x)\Big). \] 
A direct computation of $\int_{-1}^1 h\, dx = A$ shows that $\lambda = \lambda_{0, 1}^* > 0$ as defined in \eqref{eq:Iso2D_0_eigs}. Finally, we have $h\in\M_A$ as $h$ is even, $h(1) = 0$, and $h' < 0$ on $(0, 1)$; the latter follows from the fact that $\sinh(z)/z$ is strictly increasing for $z > 0$. 
\end{proof} 

\begin{proof}[\tb{Proof of Theorem \ref{thm:Iso2D_1}}] 
Fix $\alpha > 0$. Choosing $c_\alpha = \alpha^2$ in \eqref{eq:Iso2D_opt}, one such solution is $\psi = 1$. We first solve for $\zeta$. Define $\kappa_\alpha\coloneqq \sqrt{\alpha^2 + \bond}$. Substituting $\psi = 1$ into \eqref{eq:SloshCS2} and rearranging yield 
\begin{equation*} 
\zeta'' - \kappa_\alpha^2\zeta = -\Omega\bond \ \ \tr{ on $(-1, 1)$}. 
\end{equation*} 
Together with $\zeta(\pm 1) = 0$, the solution is given by
\[ \zeta(x) = \frac{\Omega\bond}{\kappa_\alpha^2}\left[1 - \frac{\cosh(\kappa_\alpha x)}{\cosh\kappa_\alpha}\right]. \] 
Next we solve for $h$. Since $(h\psi')(\pm 1) = 0$ is trivially satisfied with $\psi = 1$, it follows from \eqref{eq:SloshCS1} that 
\[ h(x) = \frac{\Omega\zeta(x)}{\alpha^2\psi(x)} = \frac{\lambda\bond}{\alpha^2\kappa_\alpha^2}\left[1 - \frac{\cosh(\kappa_\alpha x)}{\cosh\kappa_\alpha}\right]. \] 
Again, a direct computation of $\int_{-1}^1 h\, dx = A$ shows that $\lambda = \lambda_{\alpha, 1}^* > 0$ as defined in \eqref{eq:Iso2D_1_eigs}. Finally, it is clear that $h\in\M_A$ as $h$ is even, $h(1) = 0$, and $h$ is strictly decreasing on $(0, 1)$. 
\end{proof} 

\begin{remark} 
Given $\alpha\ge 0$, let $h_1$ and $h_2$ be two maximizing cross-section. By concavity of $h\mapsto\Omega_{\alpha, 1}(h)$ (see Theorem \ref{thm:Iso2D_sym}), we know that the average $h_3\coloneqq (h_1 + h_2)/2$ is also a maximizing cross-section. Let $(\psi_\alpha, \zeta_\alpha)$ be an eigenfunction associated with $\Omega_{\alpha, 1}(h_3)$. From the variational characterization \eqref{eq:Slosh2D_VCs} together with linearity of the map $h\mapsto E_\alpha(h; \psi, \zeta)$, we get 
\begin{align*}
\Omega_{\alpha, 1}(h_3) = E_\alpha(h_3; \psi_\alpha, \zeta_\alpha) & = \frac{1}{2}E_\alpha(h_1; \psi_\alpha, \zeta_\alpha) + \frac{1}{2}E(h_2; \psi, \zeta) \\ 
& \ge \frac{1}{2}\Omega_{\alpha, 1}(h_1) + \frac{1}{2}\Omega_{\alpha, 1}(h_2). 
\end{align*} 
By extremality of $h_1$ and $h_2$, it must be the case that $(\psi_\alpha, \zeta_\alpha)$ is an eigenfunction associated with both $\Omega_{\alpha, 1}(h_1)$ and $\Omega_{\alpha, 1}(h_2)$. Consequently, we have 
\begin{equation} \label{eq:Iso2D_uq1} 
\int_{-1}^1 \left(h_1 - h_2\right)\Big[\psi_\alpha'f' + \alpha^2\psi_\alpha f\Big]\, dx = 0 \ \ \tr{ for all $f\in H^1(-1, 1)$.} 
\end{equation} 
We claim that \emph{the maximizing cross-section is unique under the assumption that the eigenfunction associated with any maximizer of \eqref{eq:Iso2D} is unique.} For $\alpha = 0$, \eqref{eq:Iso2D_uq1} gives $(h_1 - h_2)\psi_0' = C$ for some constant $C$. Since $\psi_0'\neq 0$ from the proof of Theorem \ref{thm:Iso2D_0}, we must have $h_1 = h_2$ on $(-1, 1)$. For $\alpha > 0$, \eqref{eq:Iso2D_uq1} gives $\left[(h_1 - h_2)\psi_\alpha'\right]' = (h_1 - h_2)\alpha^2\psi_\alpha$. Since $\psi_\alpha$ is constant from the proof of Theorem \ref{thm:Iso2D_1}, we again have $h_1 = h_2$ on $(-1, 1)$. 
\end{remark} 

\begin{figure}[t]  
\begin{center}
	\includegraphics[width = 0.49\textwidth]{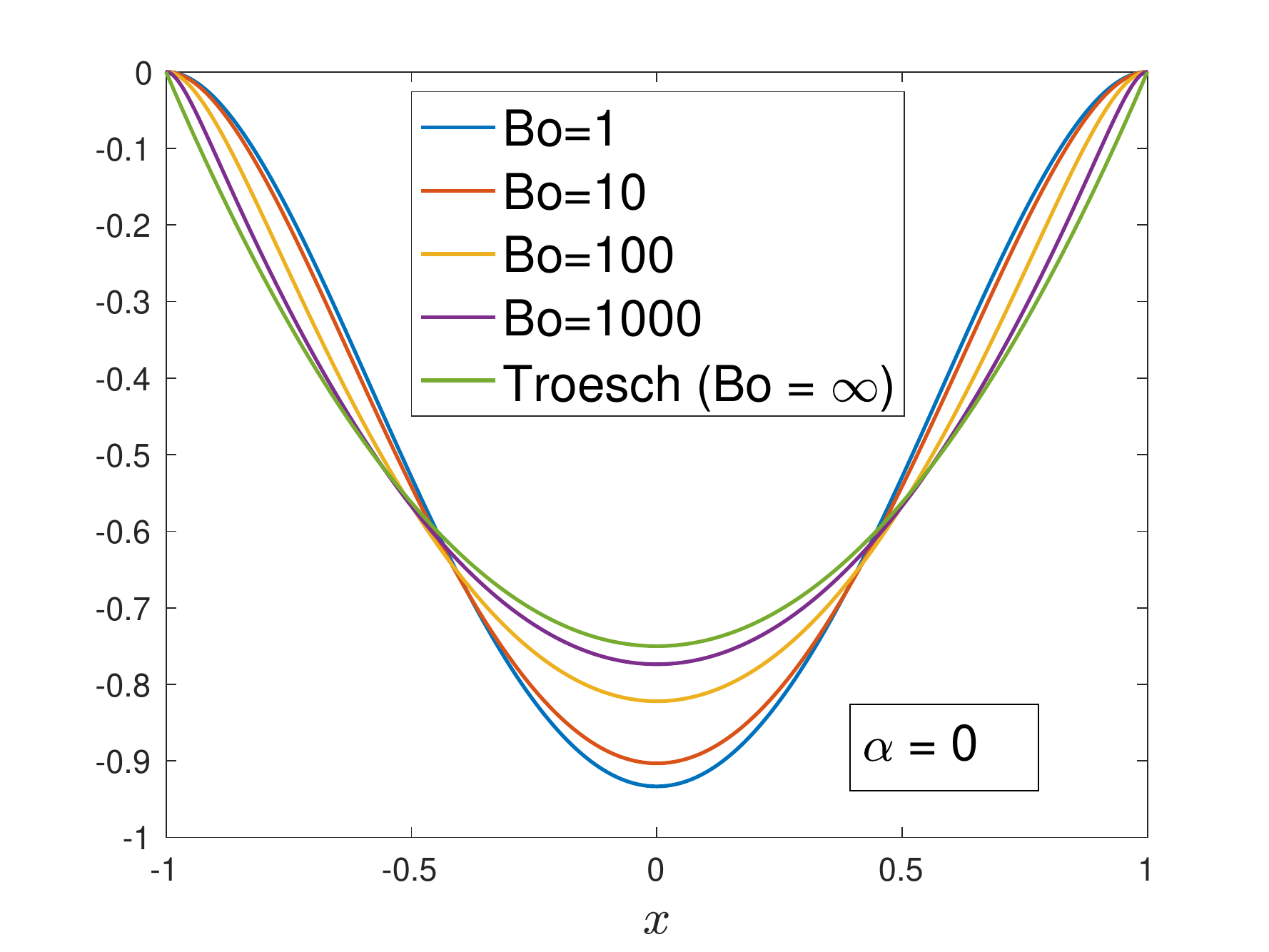} 
	\includegraphics[width = 0.49\textwidth]{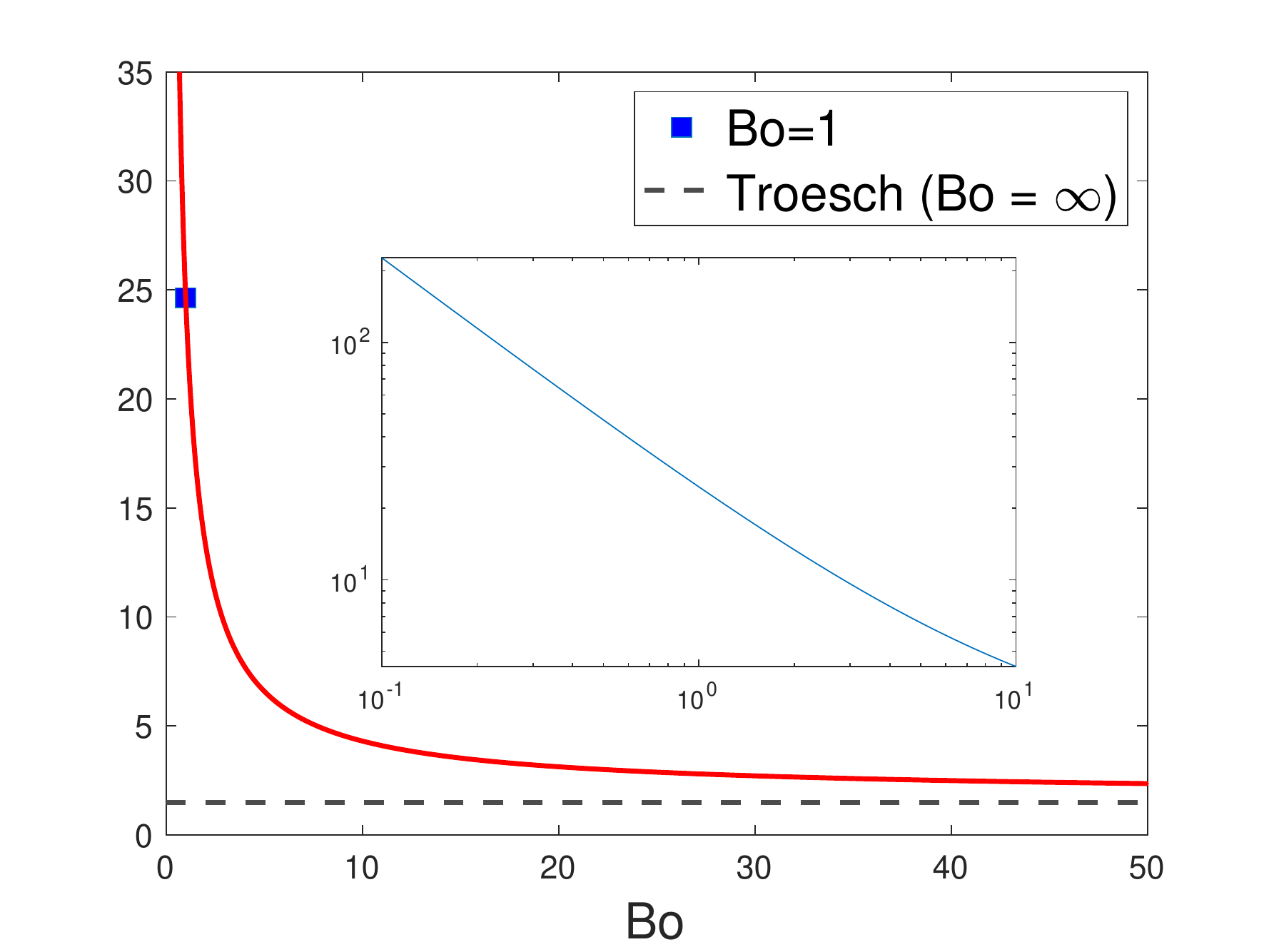} 
	\includegraphics[width = 0.49\textwidth]{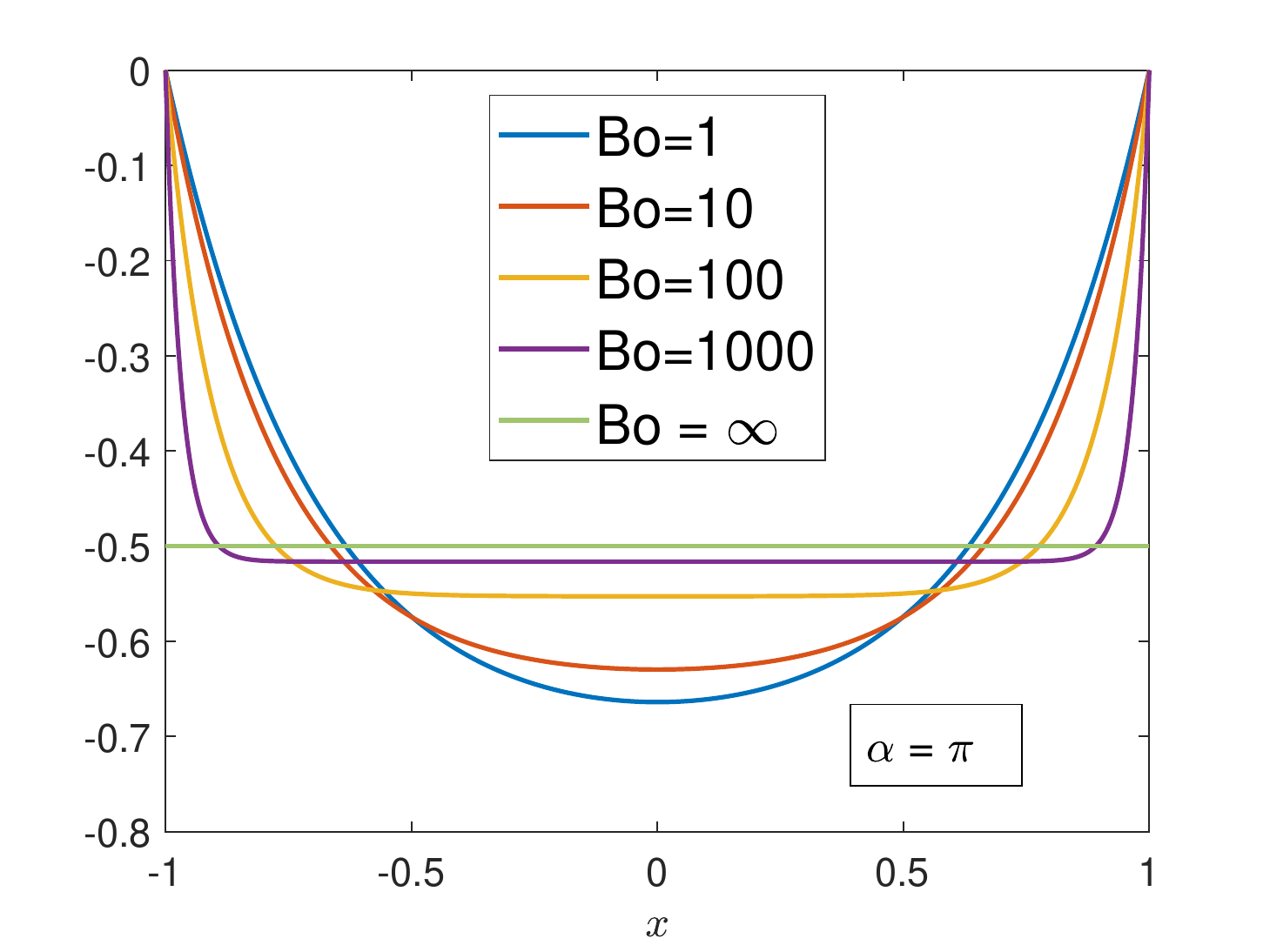} 
	\includegraphics[width = 0.49\textwidth]{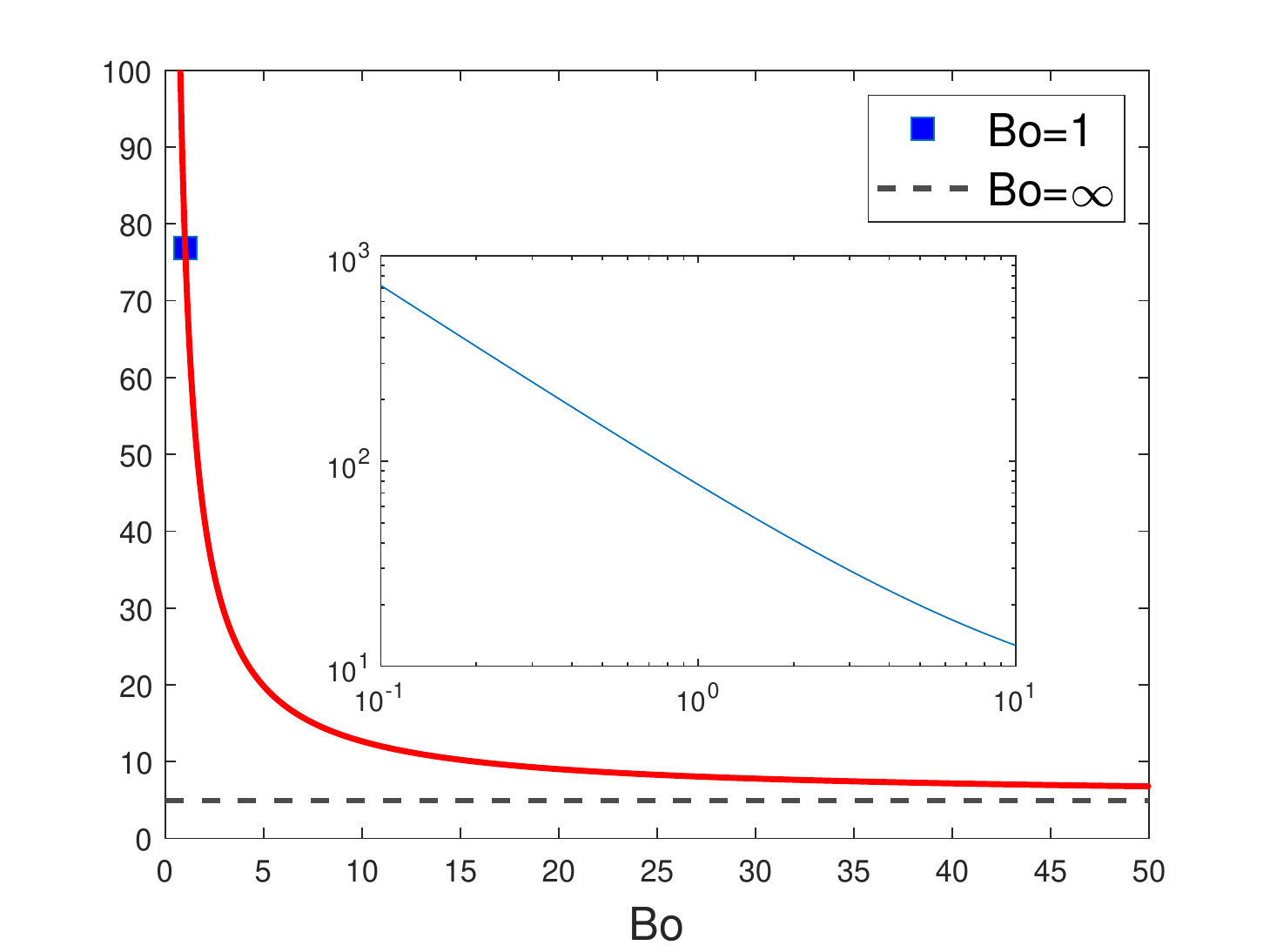} 
\caption{An illustration of the zero surface tension limit for $\alpha = 0$ \tb{(top)} and $\alpha = \pi$ \tb{(bottom)}, both with $A = 1$. \tb{(Left)} The maximizing cross-section plotted for varying $\bond$ (see \eqref{eq:Iso2D_0_shape} and \eqref{eq:Iso2D_1_shape}) and for $\bond = \infty$ (zero surface tension). \tb{(Right)} The squared maximal sloshing frequency \eqref{eq:Iso2D_0_eigs} and \eqref{eq:Iso2D_1_eigs} plotted as a function of $\bond$. The inset is a log-log plot of the squared maximal sloshing frequency for $\bond\in [0.1, 10]$.} 
\label{fig:ST_alpha} 
\end{center}
\end{figure}


\subsection{Zero surface tension limit $(\bond\to\infty)$} \label{sec:Iso2D_zeroST} 
In this subsection, we show that for every $\alpha\ge 0$, the corresponding optimal shallow canal without surface tension is the zero surface tension limit of the optimal shallow canal with surface tension. Moreover, the map $\bond\mapsto\lambda_{\alpha, 1}^*(\bond)$ is strictly decreasing on $(0, \infty)$. Figure \ref{fig:ST_alpha} illustrates these results for $\alpha = 0$ (top) and $\alpha = \pi$ (bottom). For $\bond = 1$, we get $\lambda_{0, 1}^*/\lambda_{0, 1}^{\infty, *}$ and $\lambda_{\pi, 1}^*/\lambda_{\pi, 1}^{\infty, *}$ to be approximately 16.4 and 15.6, \ie \emph{the squared maximal sloshing frequency increases drastically when capillary and gravitational forces are comparable}. In Figure \ref{fig:ST_alpha}(right), the log-log plots reveal that $\lambda_{0, 1}^*(\bond)\propto\bond^{-0.808}$ and $\lambda_{\pi, 1}^*(\bond)\propto\bond^{-0.8307}$ for $\bond\in [0.1, 10]$.

\begin{corollary}[$\alpha = 0, \bond\to\infty$] \label{thm:Iso2D_zeroST_0} 
Let $\lambda_{0, 1}^*$, $h_0^*$ be defined as in Theorem \ref{thm:Iso2D_0}. The map $\bond\mapsto\lambda_{0, 1}^*(\bond)$ is strictly decreasing on $(0, \infty)$ and $\lambda_{0, 1}^*(\bond)\to \lambda_{0, 1}^{\infty, *} = 3A/2$ as $\bond\to\infty$. Moreover, $h_0^*(\bond; x)\to h_0^{\infty, *}(x) \coloneqq 3A(1 - x^2)/4$ as $\bond\to\infty$ for every $x\in [-1, 1]$ . 
\end{corollary}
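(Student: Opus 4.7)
The plan is to combine the variational characterization of $\Omega_{0,1}$ from Theorem \ref{thm:Slosh2D_VC3} (to get strict monotonicity essentially for free) with direct asymptotic analysis of the explicit formulas \eqref{eq:Iso2D_0_eigs} and \eqref{eq:Iso2D_0_shape} (to compute the two limits).

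\textbf{Strict monotonicity.} I would argue this at the level of the variational problem \eqref{eq:Slosh2D_VCs} rather than by differentiating the closed-form expression for $\lambda_{0,1}^*$. For any fixed $h \in \M_A$ and any $(\psi,\zeta) \in \H_{\C,0}$ with $\zeta \not\equiv 0$, the term $\frac{1}{2\bond}\int_{-1}^1 (\zeta')^2\,dx$ appearing in $E_0(h;\psi,\zeta)$ is strictly decreasing in $\bond$; since $\zeta \in H_0^1(-1,1)$ and $\zeta \not\equiv 0$ force $\int (\zeta')^2 > 0$, the full energy $E_0$ is strictly decreasing in $\bond$ whenever $\zeta \not\equiv 0$. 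Because the fundamental weak sloshing eigenfunction necessarily has $\zeta \not\equiv 0$ (otherwise \eqref{eq:SloshC3} and \eqref{eq:SloshC4} force $(\varphi,\zeta)\equiv(\0,\0)$), this strict inequality passes to the fundamental frequency $\lambda_{0,1}(h;\bond)$. Taking the supremum over $h$ preserves strictness via the standard comparison: if $\bond_1 < \bond_2$ and $h^*_{\bond_2}$ is the maximizer at $\bond_2$, then
\[
\lambda_{0,1}^*(\bond_1) \;\geq\; \lambda_{0,1}(h^*_{\bond_2};\bond_1) \;>\; \lambda_{0,1}(h^*_{\bond_2};\bond_2) \;=\; \lambda_{0,1}^*(\bond_2).
\]

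\textbf{Limit of the eigenvalue.} Write $\kappa = \sqrt{\bond}$ and rewrite the bracket in \eqref{eq:Iso2D_0_eigs} as $F(\kappa) := 1 - 3\kappa^{-2}(\kappa\coth\kappa - 1)$, so $\lambda_{0,1}^* = (3A/2)/F(\kappa)$. As $\kappa\to\infty$, $\coth\kappa = 1 + O(e^{-2\kappa})$, hence $\kappa\coth\kappa - 1 = \kappa - 1 + O(\kappa e^{-2\kappa})$, giving $F(\kappa) = 1 - 3/\kappa + O(\kappa^{-2}) \to 1$. Therefore $\lambda_{0,1}^*(\bond) \to 3A/2 = \lambda_{0,1}^{\infty,*}$.

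\textbf{Pointwise limit of the shape.} From \eqref{eq:Iso2D_0_shape}, the first term converges (using the previous step) to $(3A/4)(1-x^2)$, and it remains to show that the second term vanishes pointwise on $[-1,1]$. Factoring out $\lambda_{0,1}^*$, I would bound the remaining prefactor
\[
\frac{\cosh\kappa - \cosh(\kappa x)}{\kappa\sinh\kappa} \;=\; \frac{\coth\kappa}{\kappa} - \frac{\cosh(\kappa x)}{\kappa\sinh\kappa}.
\]
The first piece tends to $0$ like $1/\kappa$. For the second, $|\cosh(\kappa x)|\le \cosh\kappa$, so the ratio is at most $(\coth\kappa)/\kappa\to 0$ at $|x|=1$; for $|x|<1$, $\cosh(\kappa x)/\sinh\kappa = O(e^{-\kappa(1-|x|)})$ decays exponentially. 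Multiplying by the bounded factor $\lambda_{0,1}^*(\bond)\to 3A/2$, the whole second term of \eqref{eq:Iso2D_0_shape} tends to $0$ for every $x\in[-1,1]$, yielding $h_0^*(\bond;x)\to 3A(1-x^2)/4$ as claimed.

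The only delicate point is the strict monotonicity, and the variational argument sketched above avoids any direct computation with $F'(\kappa)$; the remaining asymptotic analysis is elementary, so I do not anticipate any serious obstacle.
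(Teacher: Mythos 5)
Your computations of the two limits match the paper's and are correct: as $\kappa=\sqrt{\bond}\to\infty$, the bracket in \eqref{eq:Iso2D_0_eigs} tends to $1$, and the pointwise vanishing of the second term in \eqref{eq:Iso2D_0_shape} follows from $\cosh(\kappa x)/\cosh\kappa\to 0$ for $|x|<1$ together with inspection at $x=\pm1$. The paper in fact handles strict monotonicity and the eigenvalue limit in one stroke by writing $\lambda_{0,1}^*(\bond)=\lambda_{0,1}^{\infty,*}\bigl(1-3Y(\kappa)\bigr)^{-1}$ with $Y(\kappa)=(\kappa-\tanh\kappa)/(\kappa^2\tanh\kappa)$ and citing \cite{anderson:2006monotonicity} for the fact that $Y$ is strictly decreasing on $(0,\infty)$ with range $(0,1/3)$.

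Your variational route to strict monotonicity is genuinely different, but it has a gap. The chain $\lambda_{0,1}^*(\bond_1)\ge\lambda_{0,1}(h^*_{\bond_2};\bond_1)>\lambda_{0,1}(h^*_{\bond_2};\bond_2)$ hinges on the middle strict inequality for the fixed shape $h^*_{\bond_2}$, and that in turn requires the infimum in \eqref{eq:Slosh2D_VCs} to be \emph{attained} at $(h^*_{\bond_2},\bond_1)$: testing with the known minimizer at $\bond_2$, or with a near-minimizing sequence at $\bond_1$, only yields the non-strict inequality. Existence of minimizers is established in the paper only for the two-dimensional problem (Theorems \ref{thm:Slosh2D_VC2} and \ref{thm:Slosh2D_VC3}), not for the one-dimensional shallow problem \eqref{eq:Slosh2D_VCs}, where the bilinear form degenerates wherever $h$ vanishes. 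You would need to supply an argument (e.g.\ that $h^*_{\bond_2}>0$ on $(-1,1)$, which is true from \eqref{eq:Iso2D_0_shape}, so the corresponding Sturm--Liouville system has a fundamental eigenpair). Also note that $\zeta\not\equiv0$ is immediate from the constraint $G(\psi,\zeta)=1$, so the appeal to \eqref{eq:SloshC3}--\eqref{eq:SloshC4} is both unnecessary and misplaced, as those equations belong to the two-dimensional, not the shallow, problem. Since the closed-form expression for $\lambda_{0,1}^*$ is in hand anyway, the paper's route of analyzing it directly is cleaner and sidesteps these existence issues entirely.
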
 
\begin{proof} 
Define $\kappa\coloneqq \sqrt{\bond} > 0$. From \eqref{eq:Iso2D_0_eigs}, we write $\lambda_{0, 1}^*$ as  
\[ \lambda_{0, 1}^*(\bond) = \lambda_{0, 1}^*(\kappa^2) = \lambda_{0, 1}^{\infty, *}\Big(1 - 3Y(\kappa)\Big)^{-1},  \tr{ with } Y(\kappa) = \frac{\kappa - \tanh\kappa}{\kappa^2\tanh\kappa}. \] 
To prove the first statement, we need to show that $Y$ is strictly decreasing on $(0, \infty)$, with range $(0, 1/3)$. This was proven in \cite[Example 4(3), p. 809]{anderson:2006monotonicity}. Lastly, we compute the zero surface tension limit of $h_0^*(\bond; x)$. Comparing \eqref{eq:Iso2D_0_shape} to $h_0^{\infty, *}(x)$, we need only show the second term in \eqref{eq:Iso2D_0_shape} vanishes in the limit of $\bond\to\infty$ for any $x\in [-1, 1]$. This is evident for $x = \pm 1$. For any fixed $x\in (-1, 1)$, this follows from the fact that $\cosh(\kappa x)/\cosh\kappa\to 0$ as $\kappa\to\infty$. 
\end{proof} 

\begin{corollary}[$\alpha > 0$, $\bond\to\infty$] \label{thm:Iso2D_zeroST_1}
For any $\alpha > 0$, let $\lambda_{\alpha, 1}^*$, $h_\alpha^*$ and $\lambda_{\alpha, 1}^{\infty, *}$, $h_\alpha^{\infty, *}$ be defined as in Theorems \ref{thm:Iso2D_1} and \ref{thm:Iso2D_zero}, respectively. The map $\bond\mapsto\lambda_{\alpha, 1}^*(\bond)$ is strictly decreasing on $(0, \infty)$ and $\lambda_{\alpha, 1}^*(\bond)\to \lambda_{\alpha, 1}^{\infty, *}$ as $\bond\to\infty$. Moreover, $h_\alpha^*(\bond; x)\to h_\alpha^{\infty, *}(x)$ as $\bond\to\infty$ for every $x\in (-1, 1)$. 
\end{corollary}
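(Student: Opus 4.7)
The plan is to argue directly from the explicit formulas \eqref{eq:Iso2D_1_eigs} and \eqref{eq:Iso2D_1_shape}, mirroring the strategy already used for Corollary \ref{thm:Iso2D_zeroST_0}. The three assertions decouple cleanly: both the eigenvalue limit and the pointwise shape limit are straightforward asymptotics as $\kappa_\alpha := \sqrt{\alpha^2 + \bond} \to \infty$, while the monotonicity will reduce to a single elementary inequality.

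First I would rewrite
$$\lambda_{\alpha,1}^*(\bond) = \frac{\alpha^2 A}{2}\left(1 + \frac{\alpha^2}{\bond}\right) G(\kappa_\alpha), \qquad G(\kappa) := \frac{\kappa}{\kappa - \tanh \kappa}.$$
As $\bond \to \infty$ we have $\kappa_\alpha \to \infty$, so $G(\kappa_\alpha) \to 1$ (since $\tanh(\kappa_\alpha)/\kappa_\alpha \to 0$) and $1 + \alpha^2/\bond \to 1$, giving $\lambda_{\alpha,1}^*(\bond) \to \alpha^2 A/2 = \lambda_{\alpha,1}^{\infty,*}$. For the monotonicity, the factor $1 + \alpha^2/\bond$ is manifestly strictly decreasing in $\bond$, so it suffices to show $G$ is strictly decreasing on $(0, \infty)$. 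Differentiating,
$$G'(\kappa) = \frac{\kappa \sech^2\kappa - \tanh \kappa}{(\kappa - \tanh \kappa)^2},$$
and multiplying the numerator by $2\cosh^2\kappa$ turns it into $2\kappa - \sinh(2\kappa)$, which is negative for $\kappa > 0$. Since $\kappa_\alpha$ is strictly increasing in $\bond$, the product $\lambda_{\alpha,1}^*(\bond)$ is then strictly decreasing on $(0, \infty)$.

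For the shape convergence, I would fix $x \in (-1, 1)$ and rewrite \eqref{eq:Iso2D_1_shape} as
$$h_\alpha^*(\bond; x) = \frac{\lambda_{\alpha,1}^*(\bond)}{\alpha^2\bigl(1 + \alpha^2/\bond\bigr)}\left[1 - \frac{\cosh(\kappa_\alpha x)}{\cosh \kappa_\alpha}\right].$$
The prefactor tends to $A/2$ by the eigenvalue limit just proved, and since $|x| < 1$ we have the bound $\cosh(\kappa_\alpha x)/\cosh(\kappa_\alpha) \le 2 e^{-\kappa_\alpha(1 - |x|)} \to 0$. Hence $h_\alpha^*(\bond; x) \to A/2 = h_\alpha^{\infty,*}(x)$ pointwise on $(-1, 1)$. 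The only step that goes beyond substitution of limits is the monotonicity of $G$, and even there the hard part reduces to the inequality $\sinh(2\kappa) > 2\kappa$ for $\kappa > 0$, which is immediate from the Taylor expansion.
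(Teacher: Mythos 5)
Your proposal is correct and follows essentially the same route as the paper's proof: the paper factors $\lambda_{\alpha,1}^*(\bond) = \lambda_{\alpha,1}^{\infty,*}\,Z(\bond)\,[1-Y(\bond)]^{-1}$ with $Z(\bond)=1+\alpha^2/\bond$ and $Y(\bond)=\tanh(\kappa_\alpha)/\kappa_\alpha$, which is exactly your $(1+\alpha^2/\bond)G(\kappa_\alpha)$, and likewise proves the shape convergence via $\cosh(\kappa_\alpha x)/\cosh\kappa_\alpha\to 0$. You simply spell out the monotonicity of $G$ (reducing it to $\sinh(2\kappa)>2\kappa$) and the exponential decay bound that the paper declares ``immediate.''
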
 
\begin{proof} 
Fix $\alpha > 0$. From \eqref{eq:Iso2D_1_eigs}, we write $\lambda_{\alpha, 1}^*(\bond) = \lambda_{\alpha, 1}^{\infty, *}Z(\bond)\left[1 - Y(\bond)\right]^{-1}$, where 
\[ Z(\bond) = \frac{\kappa_\alpha^2}{\bond} = 1 + \frac{\alpha^2}{\bond}, \ \ Y(\bond) = \frac{\tanh\kappa_\alpha}{\kappa_\alpha} = \frac{\tanh\left(\sqrt{\alpha^2 + \bond}\right)}{\sqrt{\alpha^2 + \bond}}. \] 
The first statement now follows immediately. For any fixed $x\in (-1, 1)$, the zero surface tension limit of $h_\alpha^*(\bond; x)$ follows from the fact that $\cosh(\kappa_\alpha x)/\cosh\kappa_\alpha\to 0$ as $\bond\to\infty$. 
\end{proof}


\section{Radially symmetric containers} \label{sec:RS} 
We now assume that $\D$ is radially symmetric, \ie $\D\setminus\Gamma_0$ is generated by the rotation of a planar meridian domain $\RR$ about the $z$-axis, where $\Gamma_0$ is the part of the $z$-axis contained in $\D$; see Figure \ref{fig:container}(right). We choose the radius $r_0$ of the equilibrium free surface as the characteristic length scale. Let $(r, \theta, z)$ be dimensionless cylindrical coordinates with $\theta\in T\coloneqq (-\pi, \pi]$. Then $\D\setminus\Gamma_0$ is transformed to $\RR\times T$ and we write $\RR = \left\{(r, z)\colon r\in (0, 1), \, z\in (-h(r), 0)\right\}$ with boundary $\del\RR = \overline{\Gamma_0}\cup\B_\RR\cup\F_\RR\cup\{(1, 0)\}$. Here, $\B_\RR$ is the graph of $z = -h(r)$ with $0 < r < 1$ and $\F_\RR = (0, 1)\times \{0\}$. 

In cylindrical coordinates, the functions $\Phi$ and $\xi$ defined on $\D$ and $\F$, respectively, can be represented by the Fourier series $\sum_{m\in\Z} \varphi_m(r, z)e^{im\theta}$ and $\sum_{m\in\Z} \zeta_m(r)e^{im\theta}$, respectively. Since we are interested in real-valued solutions, we consider the corresponding real Fourier series. It is clear that $\Phi = \varphi_m(r, z)\Theta(m\theta)$ and $\xi = \zeta_m(r)\Theta(m\theta)$, with $m = 0, 1, 2, \dots$ and $\Theta(m\theta)$ being $\cos(m\theta)$ or $\sin(m\theta)$, reduces \eqref{eq:SloshST} in cylindrical coordinates to the following infinite sequence of two-dimensional generalized mixed Steklov problems for $(\omega_m, \varphi_m, \zeta_m)$: 
\begin{subequations} \label{eq:SloshRS} 
\begin{alignat}{3} 
\label{eq:SloshRS1} r^{-1}\del_r(r\del_r\varphi_m) + \del_{zz}\varphi_m & = \frac{m^2}{r^2}\varphi_m && \ \ \tr{ in } \ \ && \RR, \\ 
\label{eq:SloshRS2} \del_\n\varphi_m & = 0 && \ \ \tr{ on } && \B_{\RR}, \\ 
\label{eq:SloshRS3} \del_z\varphi_m & = \omega_m\zeta && \ \ \tr{ on } && \F_{\RR}, \\ 
\label{eq:SloshRS4} \zeta_m - \frac{1}{\bond}\left(r^{-1}(r\zeta_m')' - \frac{m^2}{r^2}\zeta_m\right)& = \omega_m\varphi_m && \ \ \tr{ on } \ \ && \F_\RR, \\ 
\label{eq:SloshRS5} \zeta_m(1) & = 0, 
\end{alignat} 
\end{subequations} 
where we now have one contact point $(r, z) = (1, 0)$ in \eqref{eq:SloshRS}. It is straightforward to verify that the ansatz for $\xi$ satisfies the necessary condition $\int_\F\xi\, dA = 0$ due to the factor $\Theta(m\theta)$ for $m > 0$. The case $m = 0$ corresponds to a purely radial motion and a necessary condition for the existence of solution is $\int_0^1 \zeta_0 r\, dr = 0$.


\subsection{Variational principle} \label{sec:RS_variational} 
We first define suitable function spaces for the Fourier coefficients $\varphi_m$ defined on $\RR$. For any $k\in \R$, we consider the weighted $L^2$-spaces on $\RR$ with weight $r^k$ 
\begin{align*} 
L_k^2(\RR)\coloneqq \left\{\varphi\colon\RR\to\R\colon \|\varphi\|_{L_k^2(\RR)}^2\coloneqq \int_\RR \varphi^2r^k\, drdz < \infty\right\}. 
\end{align*} 
For $m = 0$, we consider the following weighted Sobolev-space on $\RR$ with weight $r$ 
\begin{align*} 
H_1^1(\RR) & \coloneqq \left\{\varphi\in L_1^2(\RR)\colon \del_r\varphi, \del_z\varphi\in L_1^2(\RR)\right\}, 
\end{align*} 
with norm $\|\varphi\|_{H_1^1(\RR)}^2 \coloneqq \|\varphi\|_{L_1^2(\RR)}^2 + \|\del_r\varphi\|_{L_1^2(\RR)}^2 + \|\del_z\varphi\|_{L_1^2(\RR)}^2$. For $m > 0$, the factor $m^2/r^2$ in \eqref{eq:SloshRS1} prompts us to considering a separate weighted Sobolev space 
\[ V_1^1(\RR)\coloneqq H_1^1(\RR)\cap L_{-1}^2(\RR), \] 
with norm $\|\varphi\|_{V_1^1(\RR)}^2 \coloneqq \|\varphi\|_{L_{-1}^2(\RR)}^2 + \|\del_r\varphi\|_{L_1^2(\RR)}^2 + \|\del_z\varphi\|_{L_1^2(\RR)}^2$. The function spaces $L_k^2(0, 1)$, $H_1^1(0, 1)$, and $V_1^1(0, 1)$ can be defined analogously by replacing the measure $drdz$ with $dr$. Since $\zeta_m$ satisfies the Dirichlet boundary condition $\zeta_m(1) = 0$, we introduce the subspaces $\os{H_1^1}(0, 1)$ and $\os{V_1^1}(0, 1)$ of functions that vanish at $r = 1$. 

For $m = 1, 2, 3, \dots$, we introduce the Hilbert space $\H_\RR\coloneqq V_1^1(\RR)\times \os{V_1^1}(0, 1)$ with norm $\|(\varphi, \zeta)\|_{\H_\RR}^2\allowbreak\coloneqq \|\varphi\|_{V_1^1(\RR)}^2 + \|\zeta\|_{V_1^1(0, 1)}^2$. Suppose $(\omega_m, \varphi_m, \zeta_m)$ is a sufficiently regular solution of \eqref{eq:SloshRS}. Testing \eqref{eq:SloshRS1}, \eqref{eq:SloshRS4} with $fr$ and $gr$ respectively, with $(f, g)\in\H_\RR$, and using the remaining equations in \eqref{eq:SloshRS}, we arrive at the following weak formulation of \eqref{eq:SloshRS} for $m = 1, 2, 3, \dots$. To this end, define $\nabla\coloneqq (\del_r, \del_z)$. 

\begin{definition} 
Given $m = 1, 2, 3, \dots$, we say that $(\omega_m, \varphi_m, \zeta_m)\in\R\times\H_\RR$, $(\varphi_m, \zeta_m)\neq (\0, \0)$ is a weak sloshing eigenpair of \eqref{eq:SloshRS} if the following holds for all $(f, g)\in\H_\RR$: 
\begin{align*} 
\int_\RR \left(\nabla\varphi_m\cdot\nabla f + \frac{m^2}{r^2}\varphi_mf\right)r\, drdz & + \int_0^1 \left[\zeta_mg + \frac{1}{\bond}\left(\zeta_m'g' + \frac{m^2}{r^2}\zeta_mg\right)\right]r\, dr \\ 
& = \omega_m\int_0^1 \left(\zeta_mf + \varphi_mg\right)r\, dr. 
\end{align*} 
\end{definition} 

For $m = 0$, it is necessary to introduce the Hilbert space defined by 
\[ \H_{\RR, 0}\coloneqq \left\{(\varphi, \zeta)\in H_1^1(\RR)\times \os{H_1^1}(0, 1)\colon \int_0^1 \zeta r\, dr = 0\right\}, \]
 with norm $\|(\varphi, \zeta)\|_{\H_{\RR, 0}}^2 \coloneqq \|\varphi\|_{H_1^1(\RR)}^2 + \|\zeta\|_{H_1^1(0, 1)}^2$. 

\begin{definition} 
We say that $(\omega_0, \varphi_0, \zeta_0)\in\R\times\H_{\RR, 0}$, $(\varphi_0, \zeta_0)\neq (\0, \0)$ is a weak sloshing eigenpair of \eqref{eq:SloshRS} for $m = 0$ if the following holds for all $(f, g)\in\H_{\RR, 0}$: 
\begin{align*} 
\int_\RR \left(\nabla\varphi_0\cdot\nabla f\right) r\, drdz & + \int_0^1 \left[\zeta_0g + \frac{1}{\bond}\zeta_0'g'\right]r\, dr = \omega_0\int_0^1 \left(\zeta_0f + \varphi_0g\right)r\, dr. 
\end{align*} 
\end{definition} 

Arguing as in the case of canals, we may restrict our attention to weak sloshing eigenpairs with $\omega_m > 0$, and show that imposing $G_\RR(\varphi, \zeta) = \int_{\F_\RR} \varphi\zeta r\, dr > 0$ is a sufficient condition for obtaining positive sloshing frequencies. Similar to the energy functional $E_{\C, \alpha}$ for canals, we define the energy functional $E_{\RR, m}(\varphi, \zeta)$ given by 
\begin{equation*}
E_{\RR, m}(\varphi, \zeta)\coloneqq \underbrace{\frac{1}{2}\int_\RR \bigg[|\nabla\varphi|^2 + \frac{m^2}{r^2}\varphi^2\bigg]r\, drdz}_{D_{\RR, m}(\varphi)} + \underbrace{\frac{1}{2}\int_0^1 \bigg[\zeta^2 + \frac{1}{\bond}\bigg((\zeta')^2 + \frac{m^2}{r^2}\zeta^2\bigg)\bigg]r\, dr}_{S_{\RR, m}(\zeta)}. 
\end{equation*} 
For every $m = 0, 1, 2, \dots$, let $\omega_{m, 1}$ denote the fundamental sloshing frequency of \eqref{eq:SloshRS} with corresponding weak fundamental sloshing eigenfunction $(\varphi_{m, 1}, \zeta_{m, 1})$. We now establish a variational characterization for $\omega_{m, 1}$. 

\begin{theorem}[Variational characterization, $m\ge 1$] \label{thm:Slosh3D_VC2} 
Let $\RR$ be a bounded Lipschitz domain in $\R^2$. For every $m = 1, 2, 3, \dots$, there exists a weak fundamental sloshing eigenpair $(\omega_{m, 1}, \varphi_{m, 1}, \zeta_{m, 1})$ of \eqref{eq:SloshRS}, where $(\varphi_{m, 1}, \zeta_{m, 1})$ is a constrained minimizer of the following variational problem: 
\begin{equation} \label{eq:Slosh3D_VC2} 
\omega_{m, 1}\coloneqq \inf_{(\varphi, \zeta)\in\H_\RR} \left\{E_{\RR, m}(\varphi, \zeta)\colon G_\RR(\varphi, \zeta) = 1\right\}. 
\end{equation} 
\end{theorem}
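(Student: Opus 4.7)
The plan is to mirror the proof of Theorem \ref{thm:Slosh2D_VC2} (the canal case) with appropriate modifications for the weighted function space $\H_\RR = V_1^1(\RR) \times \os{V_1^1}(0,1)$, and apply the direct method of the calculus of variations to the constrained minimization problem \eqref{eq:Slosh3D_VC2}. Fix $m \ge 1$ and let $M \coloneqq \{(\varphi,\zeta) \in \H_\RR \colon G_\RR(\varphi,\zeta) = 1\}$ be the admissible set. I would first verify the analogue of Lemma \ref{thm:Slosh2D_VC1} in the radial setting: substituting $(f,g) = (\varphi_m,\zeta_m)$ into the weak form gives $\omega_m G_\RR(\varphi_m,\zeta_m) = D_{\RR,m}(\varphi_m) + S_{\RR,m}(\zeta_m) = E_{\RR,m}(\varphi_m,\zeta_m)$, and since for $m \ge 1$ the term $(m^2/r^2)\varphi^2 r$ in $D_{\RR,m}$ ensures $E_{\RR,m}$ has trivial kernel on $\H_\RR$, positivity of $\omega_m$ on $M$ follows.

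Next I would establish the three standard ingredients of the direct method. \emph{Weak closedness of $M$}: the functional $G_\RR$ is a continuous bilinear form on $\H_\RR$ via the weighted trace inequality that embeds $V_1^1(\RR)$ into $L^2_1(\F_\RR)$, and is weakly continuous under the compactness of the trace operator on the weighted Sobolev space (the weight $r$ is bounded away from zero near the trace $\F_\RR \subset \{z=0\}$ for $r \in (0,1)$, so the interior $r=0$ degeneracy is harmless). \emph{Weak coercivity}: the $m^2/r^2$ term in $D_{\RR,m}$ together with $|\nabla\varphi|^2 r$ controls the full $V_1^1(\RR)$-norm of $\varphi$, and $S_{\RR,m}(\zeta)$ together with the pinned boundary condition $\zeta(1)=0$ and the $m^2/r^2$ term controls the full $\os{V_1^1}(0,1)$-norm; hence a minimizing sequence is bounded in $\H_\RR$. \emph{Weak lower semicontinuity}: $E_{\RR,m}$ is a continuous, convex quadratic form on $\H_\RR$, hence weakly l.s.c. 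Extracting a weakly convergent subsequence and using weak closedness of $M$ yields a minimizer $(\varphi_m^*,\zeta_m^*) \in M$.

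To produce the eigenpair equation, I would verify that both $E_{\RR,m}$ and $G_\RR$ are continuously Fr\'echet differentiable on $\H_\RR$ and check the constraint qualification $\langle DG_\RR(\varphi,\zeta),(\varphi,\zeta)\rangle_{\H_\RR',\H_\RR} = 2G_\RR(\varphi,\zeta) = 2 \ne 0$ on $M$. The Lagrange multiplier rule then yields $\mu_m \in \R$ satisfying the analogue of \eqref{eq:Slosh2D_VC2a}, and a direct computation shows this is precisely the weak formulation for $(\mu_m,\varphi_m^*,\zeta_m^*)$. Combined with the identity $\mu_m = E_{\RR,m}(\varphi_m^*,\zeta_m^*) > 0$, this shows $\mu_m$ is a positive sloshing frequency. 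Finally, testing with any weak sloshing eigenpair $(\omega_m,\varphi_m,\zeta_m) \in M$ as a trial function in \eqref{eq:Slosh3D_VC2} and applying the identity from the Lemma shows $\mu_m$ is the smallest such, hence the fundamental frequency.

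The main obstacle I anticipate is justifying the compactness of the trace embedding $V_1^1(\RR) \hookrightarrow L^2_1(\F_\RR)$ and the weak continuity of $G_\RR$ in the weighted setting, since standard Rellich--Kondrachov compactness arguments need to be adapted to accommodate the degeneracy of the weight $r$ at $r=0$. This is a technical point already encountered in \cite{tan:2017}, and I would appeal to the framework there (in particular the weighted analogues cited in Lemmas 3.5 and 3.6 of that reference) to handle it, observing crucially that $\F_\RR$ sits away from the axis of degeneracy except at the single point $r=0$, which is a null set for the measure $r\,dr$ and therefore does not obstruct the trace compactness.
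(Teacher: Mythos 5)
Your proposal mirrors the paper's proof: define the admissible set $M$, establish weak closedness (via compact trace embedding in the weighted spaces), coercivity (exploiting the $m^2/r^2$ term to control the $L_{-1}^2$ part of the $V_1^1$ norm), and weak lower semicontinuity, then obtain a minimizer by the direct method and pass to the eigenpair equation exactly as in Theorem~\ref{thm:Slosh2D_VC2}. The only place you are imprecise is the trace compactness near the axis: the heuristic that $r=0$ is a null set for $r\,dr$ is not by itself a proof, and the paper instead invokes \cite[Lemma 4.2]{mercier:1982} for compactness of $V_1^1(\RR)\hookrightarrow L_1^2(\F_\RR)$ and $V_1^1(0,1)\hookrightarrow L_1^2(0,1)$ (together with $L_{-1}^2\subset L_1^2$) --- precisely the technical point you correctly flagged as the main obstacle and proposed to resolve by appealing to the established framework.
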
 
\begin{proof} 
Fix $m\ge 1$. Define the admissible set $M\coloneqq \left\{(\varphi, \zeta)\in\H_\RR\colon G_\RR(\varphi, \zeta) = 1\right\}$. Adapting the proof of \cite[Lemma 3.6]{tan:2017}, we see that $M$ is weakly closed in $\H_\RR$ as $L_{-1}^2(\cdot)\subset L_1^2(\cdot)$ and the embeddings $V_1^1(\RR)\hookrightarrow L_1^2(\F_\RR)$ and $V_1^1(0, 1)\hookrightarrow L_1^2(0, 1)$ are both compact \cite[Lemma 4.2]{mercier:1982}. Next we prove a coercivity estimate for $E_{\RR, m}$. Applying H\"{o}lder's and Young's inequalities, we get for any $\delta > 0$, 
\begin{align*} 
2S_{\RR, m}(\zeta) & \ge - \|\zeta^2/r\|_{L^1(0, 1)}\|r^2\|_{L^\infty(0, 1)} +  \frac{1}{\bond}\int_0^1 \left((\zeta')^2 + \frac{m^2}{r^2}\zeta^2\right)r\, dr \\ 
& \ge -\frac{\delta}{2}\|\zeta^2/r\|_{L^1(0, 1)} - \frac{1}{2\delta}\|r^2\|_{L^\infty(0, 1)} +  \frac{1}{\bond}\int_0^1 \left((\zeta')^2 + \frac{m^2}{r^2}\zeta^2\right)r\, dr \\ 
& = \left(\frac{m^2}{\bond} - \frac{\delta}{2}\right)\|\zeta\|_{L_{-1}^2(0, 1)}^2 + \frac{1}{\bond}\|\zeta'\|_{L_1^2(0, 1)}^2 - \frac{1}{2\delta}. 
\end{align*} 
Choosing $\delta = m^2/\bond$, we obtain $2S_{\RR, m}(\zeta)\ge \frac{1}{2\bond}\|\zeta\|_{V_1^1(0, 1)}^2 - \frac{\bond}{2m^2}$. Together with $2D_{\RR, m}(\varphi)\ge \|\varphi\|_{V_1^1(\RR)}^2$, this shows that $E_{\RR, m}(\varphi, \zeta)$ controls $\|(\varphi, \zeta)\|_{\H_\RR}$ and so $E_{\RR, m}$ is weakly coercive on $\H_\RR$. Weak lower semicontinuity of $E_{\RR, m}$ on $\H_\RR$ follows from weak lower semicontinuity of the norms $\|\cdot\|_{V_1^1(\RR)}$ and $\|\cdot\|_{V_1^1(0, 1)}$ and the compact embedding $V_1^1(0, 1)\hookrightarrow L_1^2(0, 1)$. The existence of a minimizer to \eqref{eq:Slosh3D_VC2} now follows from the direct method of the calculus of variations. Finally, showing a minimizer and its $\mathrm{argmin}$ is a weak fundamental sloshing eigenpair of \eqref{eq:SloshRS} is similar to that of Theorem \ref{thm:Slosh2D_VC2} and we omit this proof for brevity. 
\end{proof} 

\begin{theorem}[Variational characterization, $m = 0$] \label{thm:Slosh3D_VC3} 
Let $\RR$ be a bounded Lipschitz domain in $\R^2$. There exists a weak fundamental sloshing eigenpair $(\omega_{0, 1}, \varphi_{0, 1}, \zeta_{0, 1})$ of \eqref{eq:SloshRS}, where $(\varphi_{0, 1}, \zeta_{0, 1})$ is a constrained minimizer of the following variational problem: 
\begin{equation*} 
\omega_{0, 1}\coloneqq \inf_{(\varphi, \zeta)\in\H_{\RR, 0}} \left\{E_{\RR, 0}(\varphi, \zeta)\colon G_\RR(\varphi, \zeta) = 1\right\}. 
\end{equation*} 
\end{theorem}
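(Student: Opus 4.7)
The proof proposal mirrors the strategy of Theorem \ref{thm:Slosh2D_VC3}, with the weighted Sobolev framework of Section \ref{sec:RS_variational} replacing the standard one. The key complication in the case $m = 0$ is that the Dirichlet form $D_{\RR, 0}(\varphi) = \frac{1}{2}\int_\RR |\nabla\varphi|^2\, r\, drdz$ has the one-dimensional kernel spanned by the constants, so $E_{\RR, 0}$ is not coercive on $\H_{\RR, 0}$ and the direct method cannot be applied exactly as in Theorem \ref{thm:Slosh3D_VC2}. The plan is to factor out this kernel via a weighted-mean projection, obtain a minimizer on a codimension-one subset, and then use the projection to promote the Euler--Lagrange equation back to the full space.

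First I would introduce the weighted-mean projection
\[
P\varphi \coloneqq \varphi - 2\int_{\F_\RR} \varphi\, r\, dr,
\]
the admissible set $M \coloneqq \{(\varphi,\zeta)\in\H_{\RR,0} \colon G_\RR(\varphi,\zeta)=1\}$, and the restricted set $N \coloneqq M \cap \{\varphi \colon \int_{\F_\RR}\varphi\, r\, dr = 0\}$. Since every $\zeta\in\H_{\RR,0}$ satisfies $\int_0^1 \zeta\, r\, dr = 0$, the identity $G_\RR(P\varphi,\zeta) = G_\RR(\varphi,\zeta)$ holds, so $(P\varphi,\zeta)\in N$ for every $(\varphi,\zeta)\in M$; and because $\nabla(P\varphi) = \nabla\varphi$ we also have $E_{\RR,0}(P\varphi,\zeta) = E_{\RR,0}(\varphi,\zeta)$. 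Therefore $\inf_M E_{\RR,0} = \inf_N E_{\RR,0}$, and it suffices to produce a minimizer on $N$. On $N$ a weighted Poincar\'e inequality (available thanks to the vanishing boundary mean $\int_{\F_\RR}\varphi\, r\, dr = 0$) combined with the $S_{\RR,0}$ coercivity bound extracted from the proof of Theorem \ref{thm:Slosh3D_VC2} yields coercivity of $E_{\RR,0}$ on $N$ with respect to $\|\cdot\|_{\H_{\RR,0}}$. Combined with weak closedness of $N$ (via the compact trace embedding $H_1^1(\RR)\hookrightarrow L_1^2(\F_\RR)$ from \cite{mercier:1982}) and weak lower semicontinuity of $E_{\RR,0}$, the direct method produces a minimizer $(\widehat\varphi,\widehat\zeta)\in N$ with associated Lagrange multiplier $\widehat\mu = E_{\RR,0}(\widehat\varphi,\widehat\zeta)$.

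The main remaining step, which I expect to be the principal obstacle, is upgrading the Euler--Lagrange identity from the constrained test class $\H_{\RR,0}\cap\{\widehat f \colon \int_{\F_\RR}\widehat f\, r\, dr = 0\}$ to the full space $\H_{\RR,0}$. For any $(f,g)\in\H_{\RR,0}$, the pair $(Pf,g)$ lies in the constrained class, and substituting it into the constrained weak equation collapses to the unprojected identity: the constant difference $f - Pf$ is annihilated by $\nabla(\cdot)$ in the Dirichlet term and is integrated against $\widehat\zeta\, r\, dr$ in the coupling term, which vanishes by $\int_0^1\widehat\zeta\, r\, dr = 0$. This shows that $(\widehat\mu,\widehat\varphi,\widehat\zeta)$ is a weak sloshing eigenpair of \eqref{eq:SloshRS} for $m = 0$. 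Finally, a radial analogue of Lemma \ref{thm:Slosh2D_VC1}, giving the identity $\omega G_\RR(\varphi,\zeta) = E_{\RR,0}(\varphi,\zeta)$ on any weak eigenpair, allows us to use any other weak eigenfunction with $G_\RR = 1$ as a trial function and conclude $\widehat\mu = \omega_{0,1}$, completing the proof.
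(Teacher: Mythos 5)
Your argument is correct and follows the same approach as the paper's intended proof, which is stated only as a two-sentence pointer: note the isomorphism of $H_1^1(\RR)$ and $\os{H_1^1}(0,1)$ with the radially symmetric subspaces of $H^1(\D)$ and $H_0^1$ (citing \cite[Section II.4]{Bernardi}) and then argue as in Theorem \ref{thm:Slosh2D_VC3}; you carry out that same weighted-mean projection and Euler--Lagrange upgrade directly in the weighted framework. One small imprecision: the $S_{\RR, m}$ coercivity bound in Theorem \ref{thm:Slosh3D_VC2} is tailored to $m\ge 1$ (it leans on the $m^2/r^2$ term to recover the $L_{-1}^2$ piece of the $V_1^1$ norm), whereas for $m = 0$ the relevant space is $\os{H_1^1}(0,1)$ and coercivity is immediate from $2S_{\RR, 0}(\zeta)\ge \min(1, 1/\bond)\,\|\zeta\|_{H_1^1(0,1)}^2$, so no delicate estimate is needed there.
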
 
\begin{proof} 
Note that $H_1^1(\RR)$ and $\os{H_1^1}(0, 1)$ are isomorphic to the space of all radially symmetric functions in $H^1(\D)$ and in $H_0^1(0, 1)$, respectively; see \cite[Section II.4]{Bernardi}. The proof is then similar to Theorem  \ref{thm:Slosh2D_VC3} and we omit the proof for brevity. 
\end{proof}


\subsection{Isoperimetric sloshing problem on shallow radially symmetric containers} \label{sec:Iso3D} 
Recall the class of admissible shape functions for shallow radially symmetric containers 
\[ \M_V = \left\{h\in \PC^1[0, 1]\colon \tr{$h\ge 0$ on $[0, 1]$; $\int_0^1 hr\, dr = V/2\pi$}\right\}. \] 
Similar to the case of shallow canals, we apply the shallow water theory and assume $\varphi(r, z) = \psi(r)$. Define the function spaces $\H\coloneqq V_1^1(0, 1)\times \os{V_1^1}(0, 1)$ and $\H_0\coloneqq\{(\psi, \zeta)\in H_1^1(0, 1)\times \os{H_1^1}(0, 1)\colon \int_0^1 \zeta r\, dr = 0\}$. We may approximate $\omega_{m, 1}$ as the infimum of the following one-dimensional constrained variational problem, thanks to Theorems \ref{thm:Slosh3D_VC2} and \ref{thm:Slosh3D_VC3}:  
\begin{align} \label{eq:Slosh3D_VCs} 
\omega_{m, 1}(h)\approx\Omega_{m, 1}(h)\coloneqq \inf_{(\psi, \zeta)\in\X_m}\left\{E_m(h; \psi, \zeta)\colon G_r(\psi, \zeta) = 1\right\}, 
\end{align} 
where $\X_0\coloneqq \H_0$, $\X_m\coloneqq \H$ for every $m = 1, 2, 3, \dots$, $G_r(\psi, \zeta)\coloneqq \int_0^1 \psi\zeta r\, dr$, and 
\begin{align*} 
E_m(h; \psi, \zeta) & \coloneqq \frac{1}{2}\int_0^1 h\bigg[(\psi')^2 + \frac{m^2}{r^2}\psi^2\bigg] r\, dr + \frac{1}{2}\int_0^1 \bigg[\zeta^2 + \frac{1}{\bond}\bigg((\zeta')^2 + \frac{m^2}{r^2}\zeta^2\bigg)\bigg]r\, dr. 
\end{align*} 

We then define the weak formulation of the one-dimensional pinned-edge shallow sloshing problem for every $m = 0, 1, 2, \dots$ on $[0, 1]$ as the weak form of the Euler-Lagrange equation of the constrained variational problem \eqref{eq:Slosh3D_VCs}. 

\begin{definition} 
Given $m = 0, 1, 2, \dots$, the weak shallow sloshing eigenpair $(\Omega_m, \psi_m, \zeta_m)$ $\in\R\times\X_m$ satisfies the following equation for all $(f, g)\in\X_m$: 
\begin{equation*}
\int_0^1 \bigg[h\bigg(\psi_m'f' + \frac{m^2}{r^2}\psi_mf\bigg) + \zeta g + \frac{1}{\bond}\bigg(\zeta'g' + \frac{m^2}{r^2}\zeta g\bigg)\bigg] r\, dr = \Omega_m\int_0^1 (\zeta_mf + \psi_mg) r\, dr.
\end{equation*} 
\end{definition} 

The corresponding one-dimensional boundary eigenvalue problem for every $m = 0, 1, 2, \dots$ is given by 
\begin{subequations} \label{eq:Slosh3Ds} 
\begin{alignat}{2} 
\label{eq:Slosh3Ds1} -\left(\frac{1}{r}(rh\psi_m')' - \frac{m^2}{r^2}h\psi_m\right) & = \Omega_m\zeta_m && \ \ \tr{ on $(0, 1)$}, \\ 
\label{eq:Slosh3Ds2} \zeta_m - \frac{1}{\bond}\left(\frac{1}{r}(r\zeta_m')' - \frac{m^2}{r^2}\zeta_m\right) & = \Omega_m\psi_m && \ \ \tr{ on $(0, 1)$}, \\ 
\label{eq:Slosh3Ds3} \left(h\psi_m'\right)(1) & = \zeta_m(1) = 0. 
\end{alignat} 
\end{subequations} 
For $m = 0$, we must impose the necessary condition $\int_0^1 \zeta_0r\, dr = 0$ as well. 

We now state the \emph{isoperimetric sloshing problem for shallow radially symmetric containers}: For every $m = 0, 1, 2, \dots$, find the shape function $h\in\M_V$ that maximizes $\Omega_{m, 1}$, \ie solve
\begin{equation} \label{eq:Iso3D} 
\sup_{h\in\M_V} \Omega_{m, 1}(h). 
\end{equation} 
Similarly to the case of shallow canals (see Theorem \ref{thm:Iso2D_opt}), we obtain the following sufficient condition for solving \eqref{eq:Iso3D}. 

\begin{theorem} \label{thm:Iso3D_opt} 
Given $m = 0, 1, 2, \dots$, if there exists $h_m^*\in\M_V$ such that $(\Omega_{m, 1}(h_m^*), \psi_m^*, \zeta_m^*)\in\R\times\X_m$ is a fundamental weak shallow sloshing eigenpair with $\psi_m^*$ satisfying 
\begin{equation} \label{eq:Iso3D_opt} 
\left(\left(\psi_m^*\right)'\right)^2 + \frac{m^2}{r^2}(\psi_m^*)^2 = c_m^2 \ \ \tr{ on $r\in [0, 1)$} 
\end{equation} 
for some constant $c_m^2\neq 0$, then $h_m^*$ is a maximizer of $\Omega_{m, 1}$. 
\end{theorem}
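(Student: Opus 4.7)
The plan is to mirror the proof of Theorem \ref{thm:Iso2D_opt}, exploiting the fact that \eqref{eq:Iso3D_opt} makes the kinetic energy of $\psi_m^*$ against any depth profile collapse to a quantity determined solely by the volume constraint. Since both $E_m(h;\cdot,\cdot)$ and $G_r$ are homogeneous of degree two in $(\psi,\zeta)$, the variational characterization \eqref{eq:Slosh3D_VCs} is equivalent to the bound
\[ \Omega_{m,1}(h) \le \frac{E_m(h;\psi,\zeta)}{G_r(\psi,\zeta)} \]
for every $(\psi,\zeta)\in\X_m$ with $G_r(\psi,\zeta)>0$. The goal is to exhibit a single trial pair for which the right-hand side is independent of $h\in\M_V$.

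The key computation is to substitute $(\psi_m^*,\zeta_m^*)$ and use \eqref{eq:Iso3D_opt} to collapse the kinetic term:
\[ \frac{1}{2}\int_0^1 h\Bigl[((\psi_m^*)')^2 + \frac{m^2}{r^2}(\psi_m^*)^2\Bigr]\, r\, dr \;=\; \frac{c_m^2}{2}\int_0^1 h\, r\, dr \;=\; \frac{c_m^2 V}{4\pi}, \]
where the last equality uses the volume constraint defining $\M_V$. Hence $E_m(h;\psi_m^*,\zeta_m^*) = \tfrac{c_m^2 V}{4\pi} + S_{\RR,m}(\zeta_m^*)$ depends only on $V$ and on $\zeta_m^*$, not on $h$.

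To close the argument, I would invoke the radially symmetric analog of Lemma \ref{thm:Slosh2D_VC1}, obtained by testing the weak shallow eigenvalue problem with $(f,g)=(\psi_m^*,\zeta_m^*)$. This yields the identity $\Omega_{m,1}(h_m^*)\, G_r(\psi_m^*,\zeta_m^*) = E_m(h_m^*;\psi_m^*,\zeta_m^*)$, and since the fundamental sloshing frequency is positive, we get $G_r(\psi_m^*,\zeta_m^*) > 0$. Thus $(\psi_m^*,\zeta_m^*)$ is a valid trial pair for every $h\in\M_V$, and combining the previous displays gives
\[ \Omega_{m,1}(h)\le \frac{E_m(h;\psi_m^*,\zeta_m^*)}{G_r(\psi_m^*,\zeta_m^*)} = \frac{E_m(h_m^*;\psi_m^*,\zeta_m^*)}{G_r(\psi_m^*,\zeta_m^*)} = \Omega_{m,1}(h_m^*), \]
which is the claimed maximization.

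I do not expect a serious obstacle, as this is essentially a verbatim translation of the canal argument into cylindrical coordinates with the radial weight $r$. The most delicate point is verifying that $(\psi_m^*,\zeta_m^*)\in\X_m$; for $m\ge 1$, the constraint \eqref{eq:Iso3D_opt} forces $\psi_m^*(0)=0$ in order for $(\psi_m^*)^2/r^2$ to remain integrable against the weight $r$, and admissibility in $V_1^1(0,1)$ is then consistent with the requirement that $(\psi_m^*,\zeta_m^*)$ be a fundamental shallow sloshing eigenfunction. For $m=0$, \eqref{eq:Iso3D_opt} reduces to $|(\psi_0^*)'|=c_0$, making $\psi_0^*$ affine, and the compatibility of $\zeta_0^*$ with $\int_0^1\zeta_0^* r\, dr = 0$ is guaranteed by the assumption that $(\psi_0^*,\zeta_0^*)\in\X_0=\H_0$.
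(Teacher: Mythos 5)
Your proposal is correct and mirrors the paper's own argument: the paper treats Theorem \ref{thm:Iso3D_opt} by pointing back to Theorem \ref{thm:Iso2D_opt}, whose justification is exactly the observation that the optimality condition collapses the $h$-dependent kinetic term to a constant determined by the volume (or area) constraint, together with the energy identity $\Omega\, G_r = E_m$ that both certifies $G_r(\psi_m^*,\zeta_m^*)>0$ and shows the bound is saturated at $h_m^*$. The only superficial difference is that you work with the Rayleigh-quotient form $E_m/G_r$ while the paper normalizes trial functions to $G_r = 1$; these are equivalent by the degree-two homogeneity you already note.
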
 

\begin{remark} 
Formally, \eqref{eq:Iso3D_opt} can be interpreted as the first-order optimality condition for \eqref{eq:Iso3D}. The formal computation is similar to that in Remark \ref{rm:Iso2D_opt} and we omit this for brevity. 
\end{remark}


\subsection{Finite surface tension: proof of Theorems \ref{thm:Iso3D_1} and \ref{thm:Iso3D_0}} \label{sec:Iso3D_OC} 
We now solve \eqref{eq:Iso3D} for $m = 1$ and $m = 0$ only. Throughout this subsection, we denote by $h$ the maximizing cross-section, $(\Omega, \psi, \zeta)$ its corresponding fundamental shallow sloshing eigenpair, and $\lambda\coloneqq \Omega^2$ for notational convenience.  

\begin{proof}[\tb{Proof of Theorem \ref{thm:Iso3D_1}}] 
Set $m = 1$. In this case, we must have $\psi(0) = 0 = \zeta(0)$ since functions in $V_1^1(0, 1)$ have null trace at $r = 0$ \cite{mercier:1982}. Choosing $c_1^2 = 2$ in \eqref{eq:Iso3D_opt}, one such solution satisfying $\psi(0) = 0$ is $\psi(r) = r$. We first solve for $\zeta$. Substituting $\psi(r) = r$ into \eqref{eq:Slosh3Ds2} for $m = 1$ and rearranging, we find $\zeta$ satisfies the following nonhomogeneous Bessel-type boundary value problem: 
\begin{equation} \label{eq:Iso3D_11} 
r^2\zeta'' + r\zeta' - (\bond\, r^2 + 1)\zeta = -\Omega \bond\, r^3; \ \ \zeta(0) = \zeta(1) = 0. 
\end{equation} 
Define $\kappa\coloneqq \sqrt{\bond}$ and introduce a scaled coordinate $s\coloneqq \kappa r$. Letting $y(s) = \zeta(r) = \zeta(s/\kappa)$, we see that \eqref{eq:Iso3D_11} transforms to 
\begin{equation*} 
s^2y'' + sy' - \left(s^2 + 1\right)y = -\frac{\Omega}{\kappa}s^3; \ \ y(0) = y(\kappa) = 0. 
\end{equation*} 
Using the method of undetermined coefficients, the solution is given by 
\[ y(s) = \Omega\left[\frac{s}{\kappa} - \frac{I_1(s)}{I_1(\kappa)}\right]\ \ \tr{ or } \ \ \zeta(r) = \Omega\left[r - \frac{I_1(\kappa r)}{I_1(\kappa)}\right], \] 
where $I_\nu$ is the modified Bessel function of the first kind of order $\nu$. We discard the modified Bessel function of the second kind $K_1$ since we require $\zeta(0) = 0$. 

Next we solve for $h$. Substituting $\psi$ and $\zeta$ into \eqref{eq:Slosh3Ds1} for $m = 1$ and \eqref{eq:Slosh3Ds3} and rearranging suitably, we get 
\[ h' = -\lambda\left[r - \frac{I_1(\kappa r)}{I_1(\kappa)}\right]; \ \ h(1) = 0. \] 
Using the integration formula \cite[Eq.~10.43.1]{NIST} with $\nu = 1$, the solution is given by
\[ h(r) = \frac{\lambda}{2}(1 - r^2) - \frac{\lambda}{\kappa\, I_1(\kappa)}\Big[I_0(\kappa) - I_0(\kappa r)\Big]. \]  
Imposing the volume constraint $\int_0^1 hr\, dr = V/2\pi$ yields 
\begin{align*} 
\frac{V}{2\pi} & = \frac{\lambda}{8} - \frac{\lambda}{\kappa\, I_1(\kappa)}\left[\frac{I_0(\kappa)}{2} - \frac{I_1(\kappa)}{\kappa}\right] = \frac{\lambda}{8} - \frac{\lambda I_2(\kappa)}{2\kappa\, I_1(\kappa)}, 
\end{align*} 
where we use the integration formula \cite[Eq.~10.43.1]{NIST} with $\nu = 0$ and the recurrence relation \cite[Eq.~10.29.1]{NIST} with $\nu = 1$. Rearranging for $\lambda$ gives $\lambda = \lambda_{1, 1}^*$ as defined in \eqref{eq:Iso3D_1_eigs}. Finally, it can be shown that $h\in\M_V$ as $h(1) = 0$ and $h' < 0$ on $(0, 1)$; the latter follows from the fact that $I_1(z)/z$ is strictly increasing for $z > 0$ which is an immediate consequence of the derivative formula $[I_1(z)/z]' = I_2(z)/z$ \cite[Eq.~10.29.4]{NIST}. 
\end{proof}

\begin{proof}[\tb{Proof of Theorem \ref{thm:Iso3D_0}}] 
Set $m = 0$. Choosing $c_0^2 = 1$ in \eqref{eq:Iso3D_opt}, one possible solution is $\psi = r - d_0$ for some $d_0\in\R$. We first solve for $\zeta$. Substituting $\psi(r) = r - d_0$ into \eqref{eq:Slosh3Ds2} for $m = 0$ and rearranging, we find $\zeta$ satisfies the following nonhomogeneous Bessel-type equation: 
\begin{equation} \label{eq:Iso3D_01} 
r^2\zeta'' + r\zeta' - \bond\, r^2\zeta = -\Omega\bond\, r^2(r - d_0). 
\end{equation} 
Define $\kappa\coloneqq \sqrt{\bond}$ and introduce a scale coordinate $s\coloneqq \kappa r$. Letting $y(s) = \zeta(r) = \zeta(s/\kappa)$, we see that \eqref{eq:Iso3D_01} transforms to 
\begin{equation} \label{eq:Iso3D_02} 
s^2y'' + sy' - s^2y = -\frac{\Omega}{\kappa} s^2(s - d_0\kappa). 
\end{equation} 
The associated homogeneous equation has solutions $I_0(s)$ and $K_0(s)$. Motivated by the particular solution of $\zeta$ from the proof of Theorem \ref{thm:Iso3D_1}, we guess a particular solution of the form $y_p(s) = Bs + C + F(s)$ for some constants $B, C$ and function $F(s)$. Substituting $y_p$ into \eqref{eq:Iso3D_02}, we get $B = \Omega/\kappa$, $C = -\Omega d_0$, and $F(s)$ satisfies the following Struve-type equation: 
\begin{equation} \label{eq:Iso3D_03} 
s^2F'' + sF' - s^2F = -Bs = -\frac{\Omega s}{\kappa}. 
\end{equation} 
Comparing \eqref{eq:Iso3D_03} with \cite[Eqs.~11.2.9, 11.2.10]{NIST} with $\nu = 0$, we may take $F(s) = -\Omega\pi\Lb_0(s)/(2\kappa)$, where $\Lb_0$ is the modified Struve function of the first kind. Thus, the general solution of \eqref{eq:Iso3D_02} is given by
\[ y(s) = c_1I_0(s) + \frac{\Omega}{\kappa}\left[s - d_0\kappa - \frac{\pi}{2}\Lb_0(s)\right], \ \ c_1\in\R, \] 
where we discard $K_0$ since we require $\zeta(0) = y(0) < \infty$. Imposing the boundary condition $\zeta(1) = y(\kappa) = 0$, converting from $s$ to $r$, and rearranging suitably, we obtain 
\begin{align*} 
\zeta(r) & = \Omega\left[r - \frac{I_0(\kappa r)}{I_0(\kappa)} - d_0\left(1 - \frac{I_0(\kappa r)}{I_0(\kappa)}\right) - \frac{\pi}{2\kappa}\left(\Lb_0(\kappa r) - \frac{\Lb_0(\kappa)}{I_0(\kappa)}I_0(\kappa r)\right)\right] \\ 
& \eqqcolon \Omega\left[\zeta_1(r) - d_0\zeta(r) - \frac{\pi}{2\kappa}\zeta_3(r)\right]. 
\end{align*} 

Next we find $d_0$ by imposing the necessary condition $\int_0^1 \zeta r\, dr = 0$. Using the integral formula \cite[Eq.~10.43.1 and 11.7.3]{NIST} with $\nu = 0$, we obtain antiderivatives of $\zeta_1r$, $\zeta_2r$, and $\zeta_3r$: 
\begin{equation} \label{eq:Iso3D_04} 
\begin{alignedat}{1} 
A_1(r) & \coloneqq \int \zeta_1r\, dr = \frac{r^3}{3} - \frac{I_1(\kappa r)r}{\kappa\, I_0(\kappa)}, \ \ A_2(r) \coloneqq \int \zeta_2r\, dr = \frac{r^2}{2} - \frac{I_1(\kappa r)r}{\kappa\, I_0(\kappa)} \\ 
A_3(r) & \coloneqq \int \zeta_3r\, dr = \frac{\Lb_1(\kappa r)r}{\kappa} - \frac{\Lb_0(\kappa)I_1(\kappa r)r}{\kappa\, I_0(\kappa)}. 
\end{alignedat} 
\end{equation} 
It is clear that $A_1(0) = A_2(0) = A_3(0) = 0$. Also, $A_3(1) = -\Upsilon(\kappa)/\kappa I_0(\kappa)$ with $\Upsilon(\kappa)$ defined in \eqref{eq:Iso3D_0_d0} and the recurrence relation \cite[Eq.~10.29.1]{NIST} with $\nu = 1$ gives $A_2(1) = I_2(\kappa)/2I_0(\kappa)$. Thus, we obtain 
\begin{equation} \label{eq:Iso3D_05} 
\int_0^1 \zeta r\, dr = \Omega c_0\left[A_1(1) - d_0\, A_2(1) - \frac{\pi}{2\kappa}\, A_3(1)\right] = 0. 
\end{equation} 
Rearranging for $d_0$ yields the desired expression for $d_0$ as defined in \eqref{eq:Iso3D_0_d0}. 

Next we solve for $h$. Substituting $\psi$ and $\zeta$ into \eqref{eq:Slosh3Ds1} for $m = 0$ and \eqref{eq:Slosh3Ds3}, we get 
\[ (rh)' = -\lambda\left[\zeta_1r - d_0\zeta_2r - \frac{\pi}{2\kappa}\zeta_3r\right]; \ \ h(1) = 0. \] 
Integrating once using \eqref{eq:Iso3D_04}, imposing $h(1) = 0$ together with \eqref{eq:Iso3D_05}, and grouping terms suitably, the solution is given by 
\[ h(r) = \frac{\lambda}{3}\left[\frac{3d_0r}{2} - r^2\right] + \frac{\lambda (1 - d_0)I_1(\kappa r)}{\kappa\, I_0(\kappa)} + \frac{\lambda \pi}{2\kappa^2}\left[\Lb_1(\kappa r) - \frac{\Lb_0(\kappa)I_1(\kappa r)}{I_0(\kappa)}\right]. \] 
Integrating $I_1(\kappa r)$ and $\Lb_1(\kappa r)$ using the integral formulas \cite[Eq.~10.4.3]{NIST} with $\nu = 1$ and \cite[Formula 1.3]{gaunt:2018} with $\nu = 1$, respectively, the volume constraint $\int_0^1 hr\, dr = V/2\pi$ becomes 
\begin{align*} 
\frac{V}{2\pi} & = \frac{\lambda}{3}\left[\frac{d_0}{2} - \frac{1}{4}\right] + \frac{\lambda(1 - d_0)\pi\Upsilon(\kappa)}{2\kappa^2I_0(\kappa)} \\ 
& \qquad + \frac{\lambda\pi}{2\kappa^2}\left[\frac{\kappa^2}{6\pi}\, {}_2F_3\left(1, 2; \frac{3}{2}, \frac{5}{2}, 3; \frac{\kappa^2}{4}\right) - \frac{\pi\Lb_0(\kappa)\Upsilon(\kappa)}{2\kappa\, I_0(\kappa)}\right]. 
\end{align*} 
Factoring $1/36$ from the right side of the equation above, rearranging for $\lambda$, and simplifying, we obtain $\lambda = \lambda_{0, 1}^*$ as defined in \eqref{eq:Iso3D_0_eigs}. Finally, we observe that $h\ge 0$ on $[0, 1]$ and so $h\in\M_V$. 
\end{proof}

\begin{figure}[t] 
\begin{center}
	\includegraphics[width = 0.49\textwidth]{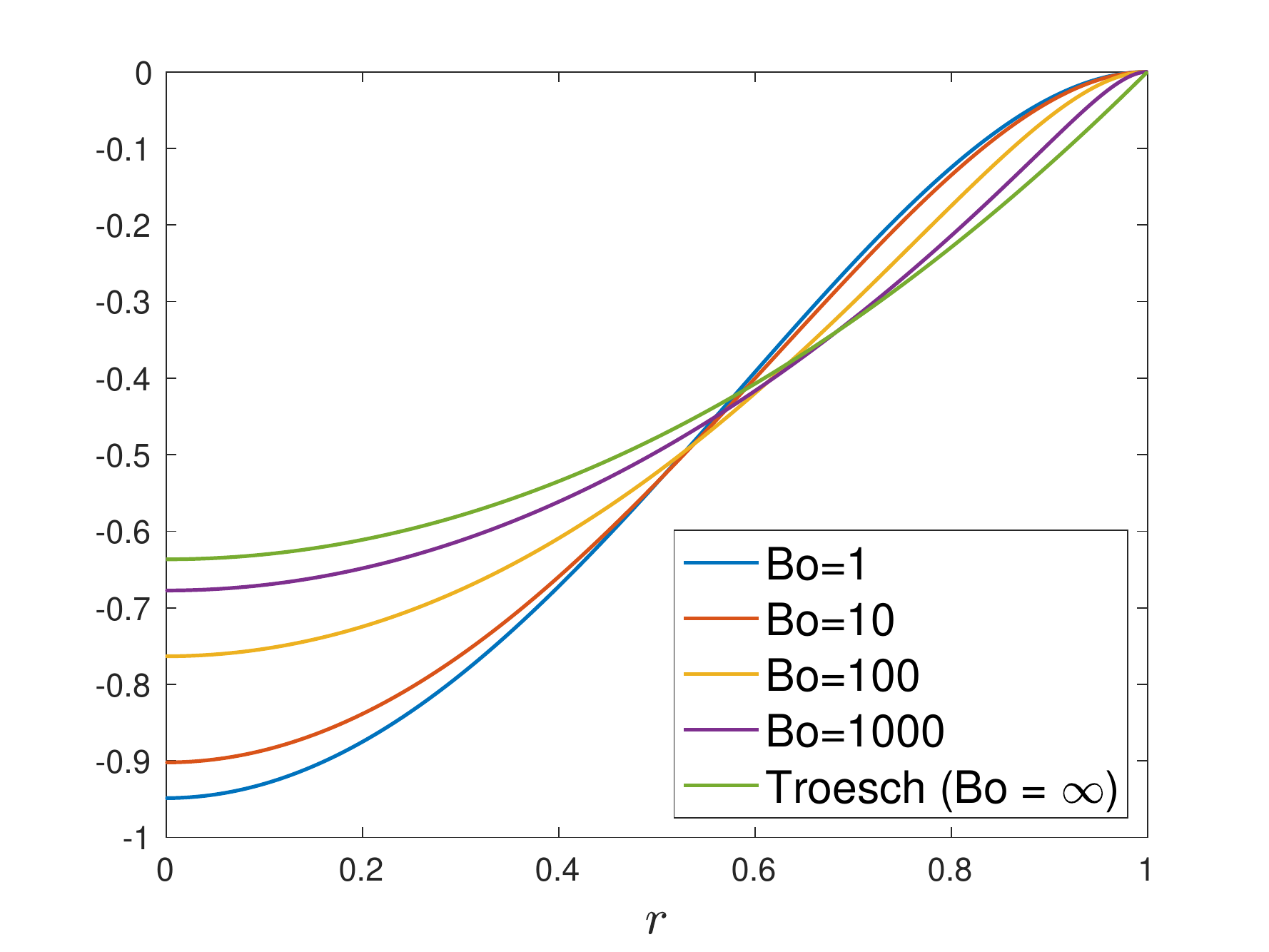} 
	\includegraphics[width = 0.49\textwidth]{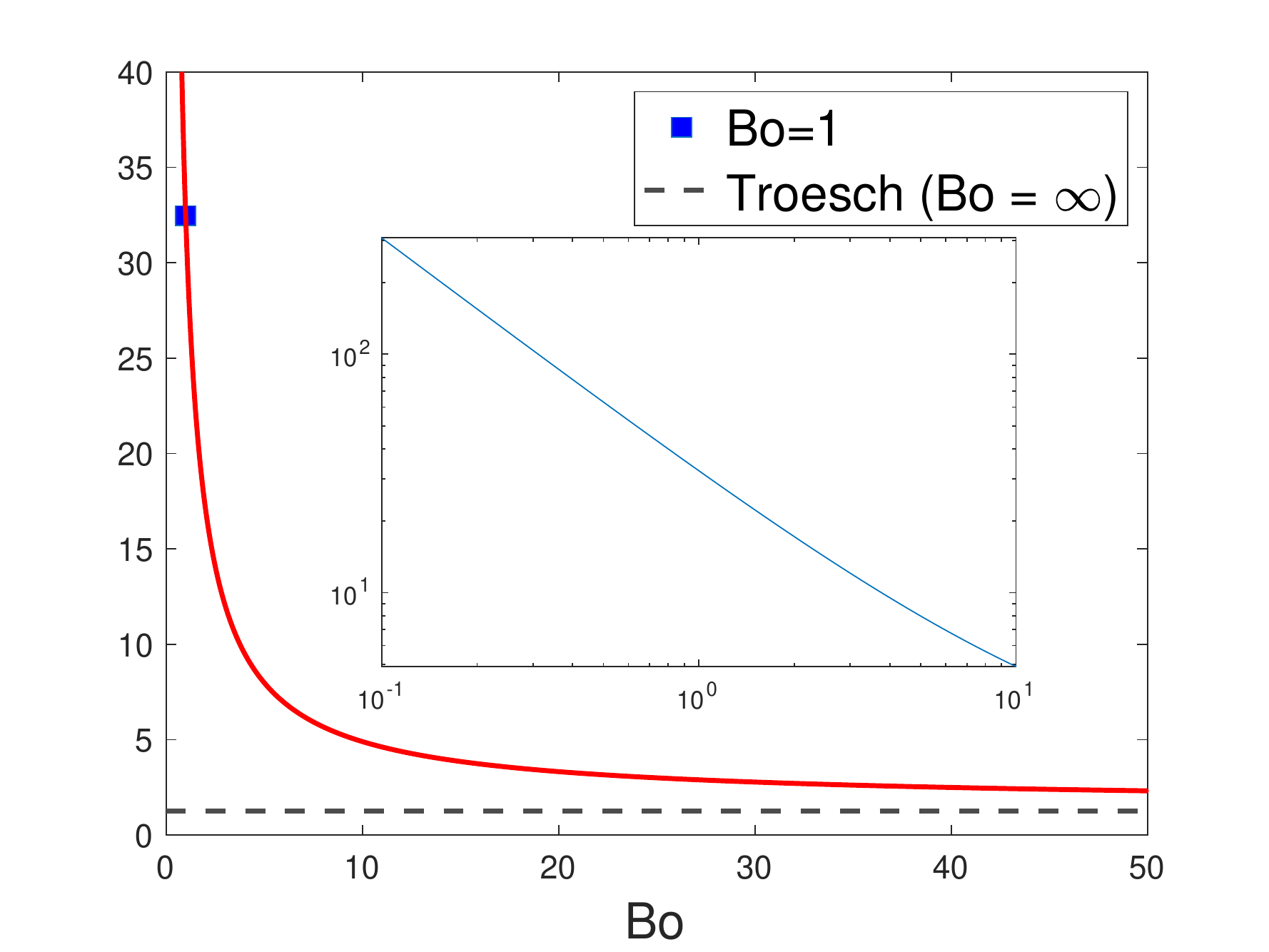} 
	\includegraphics[width = 0.49\textwidth]{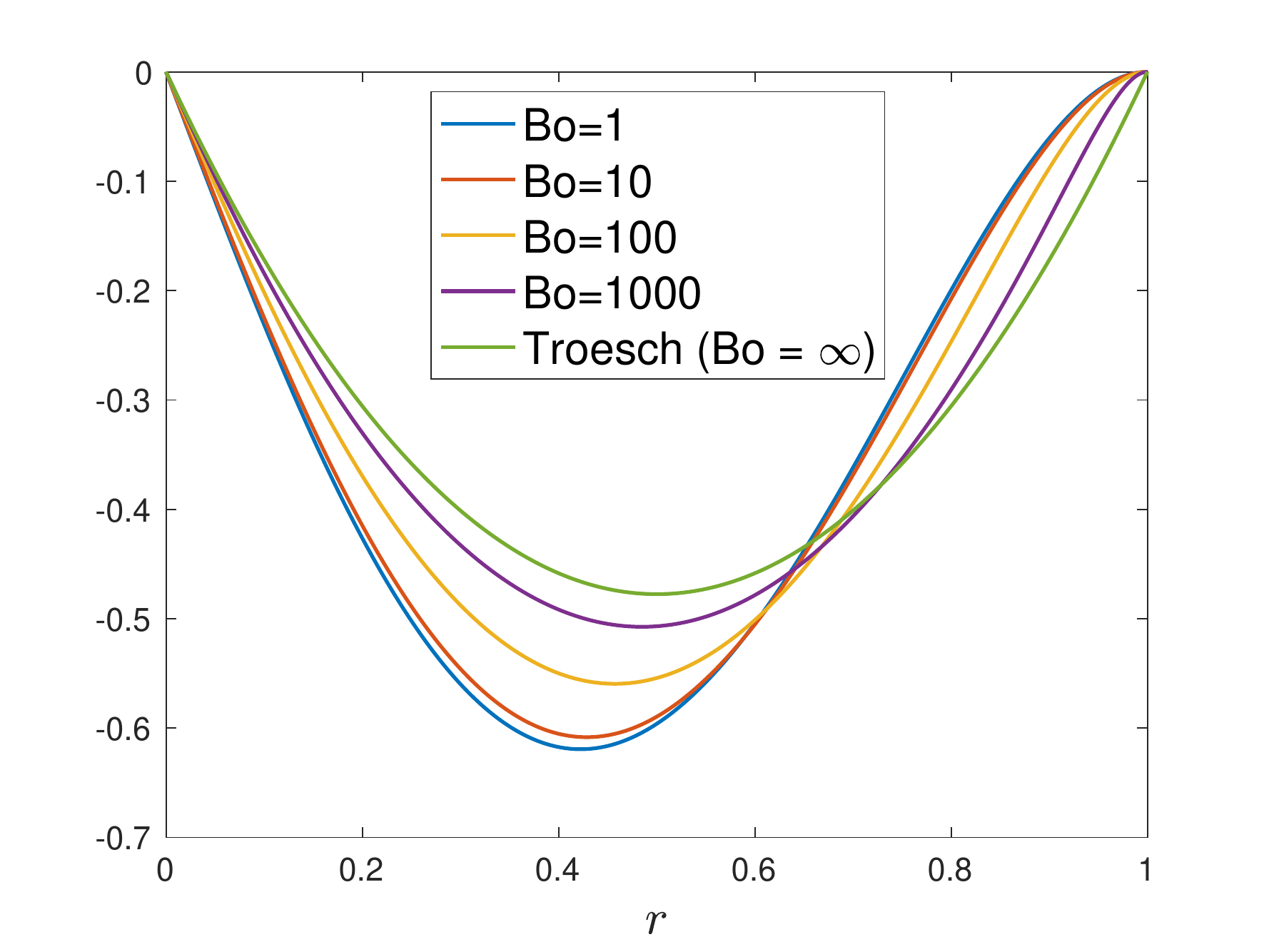} 
	\includegraphics[width = 0.49\textwidth]{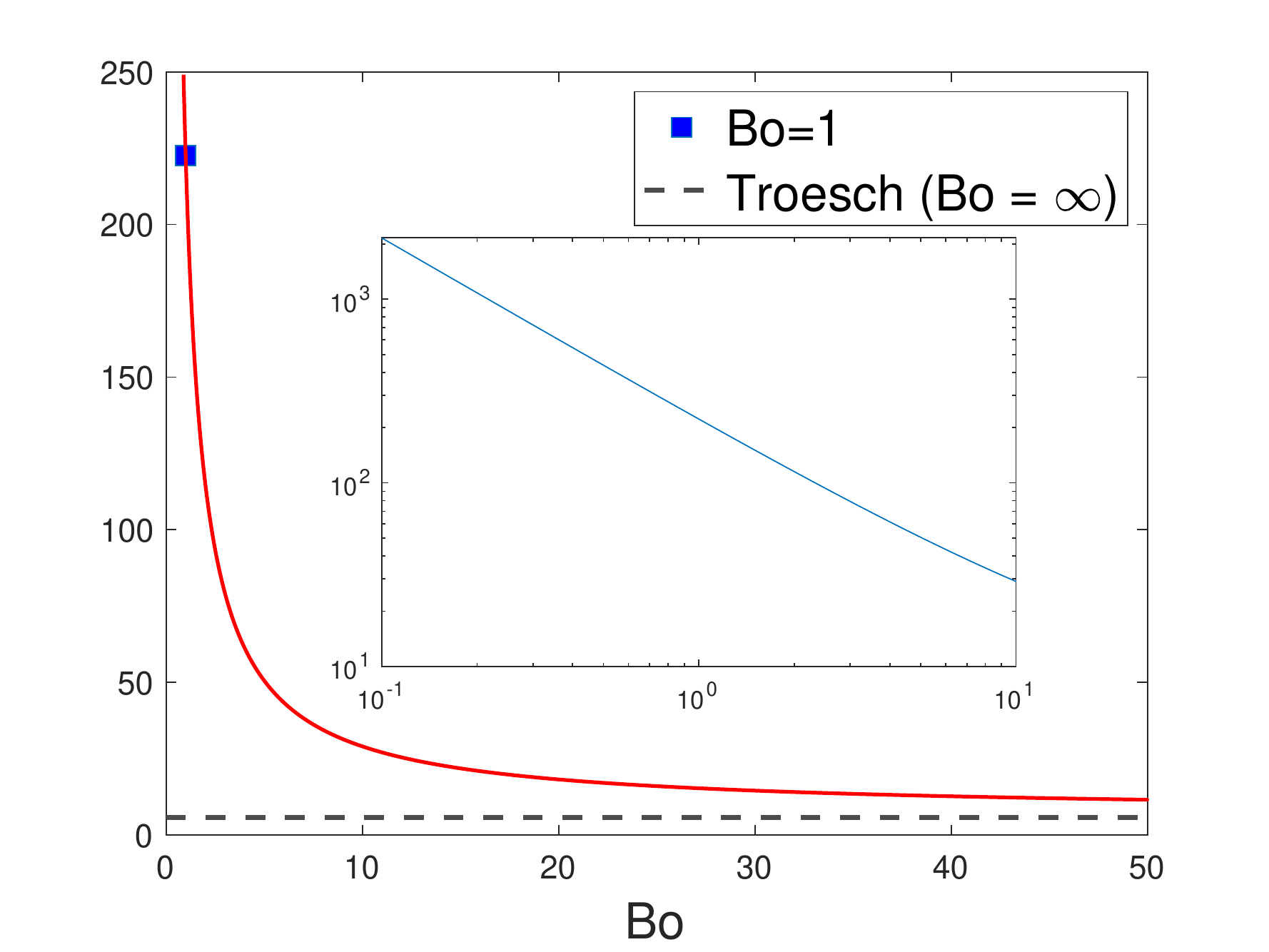}
\caption{An illustration of the zero surface tension limit for $m = 1$ \tb{(top)} and $m = 0$ \tb{(bottom)}, both with $V = 1$. \tb{(Left)} The maximizing cross-section plotted for varying $\bond$ (see \eqref{eq:Iso3D_1_shape} and \eqref{eq:Iso3D_0_shape}) and for $\bond = \infty$ (zero surface tension). \tb{(Right)} The squared maximal sloshing frequency \eqref{eq:Iso3D_1_eigs} and \eqref{eq:Iso3D_0_eigs} plotted as a function of $\bond$, with $\lambda_{1, 1, \bond = \infty} = 4/\pi$ and $\lambda_{0, 1, \bond = \infty} = 18/\pi$ (dashed lines). The inset is a log-log plot of the squared maximal sloshing frequency for $\bond\in [0.1, 10]$.} 
\label{fig:ST_m_1} 
\end{center}
\end{figure} 


\subsection{Zero surface tension limit ($\bond\to\infty$)} \label{sec:Iso3D_zeroST} 
In this subsection, we show that Troesch's optimal containers without surface tension for $m = 1$ and $m = 0$ are the zero surface tension limit of the optimal containers from Theorems \ref{thm:Iso3D_1} and \ref{thm:Iso3D_0}, respectively. Moreover, the map $\bond\mapsto\lambda_{m, 1}^*(\bond)$ is strictly decreasing on $(0, \infty)$. Figure \ref{fig:ST_m_1} illustrates these results with $m = 1$ (top) and $m = 0$ (bottom). For $\bond = 1$, we get $\lambda_{1, 1}^*/\lambda_{1, 1, \bond = \infty}^*$ and $\lambda_{0, 1}^*/\lambda_{0, 1, \bond = \infty}^*$ to be approximately 25.5 and 38.8, \ie \emph{the squared maximal sloshing frequency increases drastically when capillary and gravitational forces are comparable}. In Figure \ref{fig:ST_m_1}(right), the log-log plots reveal that $\lambda_{1, 1}^*(\bond)\propto\bond^{-0.86}$ and $\lambda_{0, 1}^*(\bond)\propto\bond^{-0.911}$ for $\bond\in [0.1, 10]$. 

To prove the claimed zero surface tension limit, we will repeatedly use the asymptotic behavior of $I_\nu$ \cite[Eq.~10.30.4]{NIST}: 
\begin{equation} \label{eq:asymp} 
I_\nu(\kappa)\sim \frac{e^\kappa}{\sqrt{2\pi\kappa}}, \ \ \kappa\to\infty, \, \nu\in\R. 
\end{equation} 

\begin{corollary}[$m = 1, \bond\to\infty$] \label{thm:Iso3D_zeroST_1}
Let $\lambda_{1, 1}^*$ and $h_1^*$ be defined as in Theorem \ref{thm:Iso3D_1}. The map $\bond\mapsto\lambda_{1, 1}^*(\bond)$ is strictly decreasing on $(0, \infty)$ and $\lambda_{1, 1}^*(\bond)\to 4V/\pi$ as $\bond\to\infty$. Moreover, $h_1^*(\bond; r)\to 2V(1 - r^2)/\pi$ as $\bond\to\infty$ for every $r\in [0, 1]$. 
\end{corollary}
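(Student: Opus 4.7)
The plan is to write \eqref{eq:Iso3D_1_eigs} in the concise form $\lambda_{1,1}^*(\bond) = \frac{4V}{\pi}\bigl[1 - Y(\kappa)\bigr]^{-1}$ with $\kappa := \sqrt{\bond}$ and auxiliary function $Y(\kappa) := \frac{4 I_2(\kappa)}{\kappa\, I_1(\kappa)}$, so that the strict monotonicity of $\bond \mapsto \lambda_{1,1}^*(\bond)$ and the limit $\lambda_{1,1}^*(\bond) \to 4V/\pi$ both reduce to proving that $Y : (0,\infty) \to (0,1)$ is strictly decreasing with $Y(0^+) = 1$ and $Y(\infty) = 0$. This mirrors the structure of the argument used in Corollary \ref{thm:Iso2D_zeroST_0}.

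The two endpoint values are immediate: the small-argument series $I_\nu(\kappa) \sim (\kappa/2)^\nu/\Gamma(\nu+1)$ gives $Y(0^+) = 1$, while \eqref{eq:asymp} applied to both $I_1$ and $I_2$ yields $I_2(\kappa)/I_1(\kappa) \to 1$, hence $Y(\kappa) \sim 4/\kappa \to 0$. Strict monotonicity is the technical heart of the argument. I would first use the recurrence $I_2(\kappa) = I_0(\kappa) - 2 I_1(\kappa)/\kappa$ to rewrite $Y(\kappa) = \frac{4 I_0(\kappa)}{\kappa\, I_1(\kappa)} - \frac{8}{\kappa^2}$, then differentiate directly using the identities $I_0'(\kappa) = I_1(\kappa)$ and $\bigl(I_1(\kappa)/\kappa\bigr)' = I_2(\kappa)/\kappa$, reducing the sign of $Y'(\kappa)$ to a single Bessel-function inequality to be verified on $(0,\infty)$; alternatively, one can invoke a known monotonicity result for ratios of modified Bessel functions in the spirit of the reference cited in Corollary \ref{thm:Iso2D_zeroST_0}.

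For the pointwise convergence $h_1^*(\bond; r) \to 2V(1-r^2)/\pi$, I would work from \eqref{eq:Iso3D_1_shape}. Since $\lambda_{1,1}^*(\bond) \to 4V/\pi$ by the first part, the leading term $\tfrac{\lambda_{1,1}^*}{2}(1-r^2)$ converges to $\tfrac{2V}{\pi}(1-r^2)$ uniformly on $[0,1]$, so it suffices to show that the correction term $\frac{\lambda_{1,1}^*}{\kappa I_1(\kappa)}\bigl[I_0(\kappa) - I_0(\kappa r)\bigr]$ vanishes pointwise in $r \in [0,1]$. With $\lambda_{1,1}^*$ uniformly bounded, this amounts to showing $\bigl[I_0(\kappa) - I_0(\kappa r)\bigr]/[\kappa\, I_1(\kappa)] \to 0$. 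The case $r = 1$ is immediate; for $r \in (0, 1)$, \eqref{eq:asymp} gives $I_0(\kappa)/[\kappa I_1(\kappa)] \sim 1/\kappa$ and $I_0(\kappa r)/[\kappa I_1(\kappa)] \sim e^{-\kappa(1-r)}/(\kappa\sqrt{r})$, both of which vanish; for $r = 0$, one uses $I_0(0) = 1$ and again $I_0(\kappa)/[\kappa I_1(\kappa)] \sim 1/\kappa \to 0$.

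The main obstacle is the strict monotonicity of $Y$ on $(0,\infty)$: the endpoint limits and the convergence of $h_1^*$ follow from a bounded-$\lambda_{1,1}^*$ argument coupled with the asymptotic \eqref{eq:asymp}, but showing $Y'(\kappa) < 0$ everywhere reduces to a nontrivial inequality among $I_0, I_1, I_2$. I expect a short derivation via the recurrences above, or a direct appeal to the existing literature on monotonic Bessel ratios, to resolve this step cleanly.
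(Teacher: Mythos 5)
Your proposal is correct and follows essentially the same route as the paper's proof: rewrite $\lambda_{1,1}^*$ in terms of the ratio $I_2(\kappa)/(\kappa I_1(\kappa))$, establish its strict monotonic decrease and endpoint limits, then use the asymptotic \eqref{eq:asymp} to kill the correction term in $h_1^*$. The one step you flag as the ``main obstacle,'' strict monotonicity of $Y$, is handled in the paper exactly by your second suggested route — a direct citation to the literature (Simpson--Spector, 1984) for monotonicity of $I_2(\kappa)/(\kappa I_1(\kappa))$ and its limit at $0^+$ — so no new derivation is actually needed; your proposed rewriting via $I_2 = I_0 - 2I_1/\kappa$ and the derivative identities would also work but is not how the paper closes it.
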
 
\begin{proof} 
Define $\kappa\coloneqq \sqrt{\bond} > 0$. From \eqref{eq:Iso3D_1_eigs}, we write $\lambda_{1, 1}^*$ as  
\[ \lambda_{1, 1}^*(\bond) = \lambda_{1, 1}^*(\kappa^2) = \frac{4V}{\pi}\Big(1 - 4Y(\kappa)\Big)^{-1}, \tr{ with $Y(\kappa) = \frac{I_2(\kappa)}{\kappa I_1(\kappa)}$}. \] 
Proving the first statement of the corollary is equivalent to proving that $Y$ is strictly decreasing on $(0, \infty)$, with range $(0, 1/4)$. The strict monotonicity and limit as $\kappa\to 0^+$ were both proved in \cite{simpson:1984}, and the limit at infinity follows from \eqref{eq:asymp}. To prove the second statement, we need only show the second term in \eqref{eq:Iso3D_1_shape} vanishes in the limit of $\bond\to\infty$ for any $r\in [0, 1]$, but this follows easily from \eqref{eq:asymp}. 
\end{proof}   

For the next corollary, we observe that the map $\bond\mapsto\lambda_{0, 1}^*(\bond)$ is strictly decreasing on $(0, \infty)$ as well. 

\begin{corollary}[$m = 0, \bond\to\infty$] \label{thm:Iso3D_zeroST_0}
Let $\lambda_{0, 1}^*$ and $h_0^*$ be defined as in Theorem \ref{thm:Iso3D_0}. Then $\lambda_{0, 1}^*(\bond)\to 18V/\pi$ as $\bond\to\infty$. Moreover, $h_0^*(\bond; r)\to 6(r - r^2)/\pi$ as $\bond\to\infty$ for every $r\in [0, 1]$. 
\end{corollary}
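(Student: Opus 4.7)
The plan is to sidestep the cumbersome bracket in \eqref{eq:Iso3D_0_eigs}---whose ${}_2F_3$ summand and $\Lb_0\Upsilon/(\bond^{3/2}I_0)$ summand each grow exponentially in $\sqrt{\bond}$ individually---by working directly with the shape formula \eqref{eq:Iso3D_0_shape} and recovering $\lambda_{0,1}^*$ at the end from the volume constraint $\int_0^1 h_0^*(r)\, r\, dr = V/(2\pi)$. Upon integration the offending exponential growths cancel against each other, leaving only $O(1/\bond)$ remainders.

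First I would establish $d_0(\bond) \to 2/3$ by plugging into \eqref{eq:Iso3D_0_d0} the large-$\kappa$ asymptotic $I_\nu(\kappa) \sim e^\kappa/\sqrt{2\pi\kappa}$ (equation \eqref{eq:asymp}) together with the standard large-argument expansions $\Lb_0(\kappa) = I_0(\kappa) - 2/(\pi\kappa) + O(\kappa^{-3})$ and $\Lb_1(\kappa) = I_1(\kappa) - 2/\pi + O(\kappa^{-2})$ of the modified Struve functions. These give $\Upsilon(\kappa) \sim (2/\pi)\,I_0(\kappa)$, and combined with $I_1(\kappa)/I_0(\kappa) \to 1$ and the recurrence $I_2 = I_0 - 2I_1/\kappa$, both numerator and denominator of $d_0$ are dominated by their $\bond\, I_0(\kappa)$-scale contributions, so the ratio converges to $2/3$. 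Next I would examine the three constituents of the rescaled shape $h_0^*/\lambda_{0,1}^*$ pointwise on $[0, 1]$: the first, $\tfrac{1}{3}(3d_0 r/2 - r^2)$, tends to $(r - r^2)/3$; the second, $(1 - d_0) I_1(\kappa r)/(\kappa\, I_0(\kappa))$, is bounded in modulus by $|1 - d_0|/\kappa$ using $I_1(\kappa r) \le I_0(\kappa)$ on $[0,1]$; and the third, $\tfrac{\pi}{2\bond}[\Lb_1(\kappa r) - \Lb_0(\kappa) I_1(\kappa r)/I_0(\kappa)]$, vanishes like $O(1/\bond)$ because the bracket tends to $-2/\pi$ for any fixed $r \in (0, 1)$ by the Struve asymptotics above (the endpoints $r = 0, 1$ are handled directly). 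This third cancellation is the computation the paper corroborates via Mathematica, as flagged in the remark following the main theorems.

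To pin down the limit of $\lambda_{0,1}^*$, I would integrate each term of $h_0^*$ against $r\, dr$. The first integral is exactly $\lambda_{0,1}^*(d_0/2 - 1/4)/3 = \lambda_{0,1}^*[1/36 + o(1)]$. For the second, integration by parts yields $\int_0^1 I_1(\kappa r) r\, dr = I_0(\kappa)/\kappa - \kappa^{-2}\int_0^\kappa I_0(s)\, ds$, and the Laplace-type estimate $\int_0^\kappa I_0(s)\, ds \sim I_0(\kappa)$ reduces the second integral to $O(\lambda_{0,1}^*/\bond)$. For the third, an analogous pairing of $\int_0^1 \Lb_1(\kappa r) r\, dr$ against $(\Lb_0(\kappa)/I_0(\kappa))\int_0^1 I_1(\kappa r) r\, dr$---whose dominant $I_0(\kappa)/\kappa$ contributions cancel against each other---also delivers $O(\lambda_{0,1}^*/\bond)$. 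The volume constraint then reads $V/(2\pi) = \lambda_{0,1}^*(\bond)\,[1/36 + O(1/\bond)]$, which yields $\lambda_{0,1}^*(\bond) \to 18V/\pi$ without any need to invoke monotonicity. Multiplying by the pointwise limit of the rescaled shape gives $h_0^*(\bond; r) \to (18V/\pi)(r - r^2)/3 = 6V(r - r^2)/\pi$ for each $r \in [0, 1]$.

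The hard part will be the third-term analysis: both its pointwise vanishing and the cancellation inside the corresponding integral rely on a delicate interplay between the modified Struve and Bessel asymptotics, and neither $\int_0^1 \Lb_1(\kappa r) r\, dr$ nor the combined integrand admits a clean antiderivative in Bessel functions. Making these estimates rigorous and uniform in $r \in [0, 1]$ requires careful control of the Struve-function remainders, which is precisely where the paper resorts to symbolic computation.
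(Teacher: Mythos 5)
Your plan is essentially the same as the paper's, differing mainly in bookkeeping and in that you flesh out a step the paper delegates to Mathematica. The paper also sidesteps the exponential cancellation in \eqref{eq:Iso3D_0_eigs}: it regroups the bracket as $Y_1 + 18(1-d_0)Y_2 + Y_3/36$ with $Y_1 = 6d_0 - 3$, $Y_2 = \pi\Upsilon/(\kappa^2 I_0)$, and $Y_3$ deliberately left in integral form $\bigl[I_0(\kappa)\int_0^\kappa s\Lb_1\,ds - \Lb_0(\kappa)\int_0^\kappa sI_1\,ds\bigr]/(\kappa^4 I_0(\kappa))$ ``obtained as the definite integral of the last term in $h_0^*$''---precisely your observation. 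It then integrates by parts, introduces $\Mb_0 = \Lb_0 - I_0$, and uses \eqref{eq:asymp}, the estimate $\int_0^\kappa I_0(s)\,ds\sim e^\kappa/\sqrt{2\pi\kappa}$, and the limiting forms for $\Mb_0$ to conclude $Y_3\to 0$; your integral pairing of $\int_0^1 \Lb_1(\kappa r)r\,dr$ against $\bigl(\Lb_0(\kappa)/I_0(\kappa)\bigr)\int_0^1 I_1(\kappa r)r\,dr$ reproduces exactly this cancellation. Two small contrasts: (i) you derive $d_0\to 2/3$ from the Struve asymptotics $\Lb_0 = I_0 - 2/(\pi\kappa) + O(\kappa^{-3})$, $\Lb_1 = I_1 - 2/\pi + O(\kappa^{-2})$, whereas the paper instead cites Gaunt's rigorous two-sided bound $2/\kappa^2 < \pi\Upsilon/(\kappa^2 I_2) < 8/(3\kappa^2)$; both routes are valid. (ii) For the pointwise limit of the third term of $h_0^*$ at fixed $r\in(0,1)$, the paper resorts to Mathematica, while you supply the hand calculation---the bracket tends to $-2/\pi$ since $\Lb_1(\kappa r) - I_1(\kappa r)\to -2/\pi$ and the correction $I_1(\kappa r)\,(1 - \Lb_0(\kappa)/I_0(\kappa))\sim (2/\pi\kappa)\,I_1(\kappa r)/I_0(\kappa)$ is exponentially small for $r < 1$---which is a genuine, if modest, improvement. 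Your closing caveat about uniformity in $r$ is unnecessary: the corollary only asserts a pointwise limit for each fixed $r\in[0,1]$, so fixed-$r$ asymptotics suffice, and the paper likewise never claims (nor needs) uniformity or monotonicity of $\lambda_{0,1}^*$ for $m=0$.
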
 
\begin{proof} 
Define $\kappa\coloneqq\sqrt{\bond} > 0$. From \eqref{eq:Iso3D_0_eigs}, we write $\lambda_{0, 1}^*$ as 
\[ \lambda_{0, 1}^*(\bond) = \lambda_{0, 1}^*(\kappa^2) = \frac{18V}{\pi}\left[Y_1(\kappa) + 18(1 - d_0)Y_2(\kappa) + \frac{Y_3(\kappa)}{36}\right]^{-1}, \] 
where $Y_1(\kappa)\coloneqq 6d_0 - 3$, $Y_2(\kappa)\coloneqq \pi\Upsilon(\kappa)/(\kappa^2I_0(\kappa))$, and 
\begin{equation*} 
Y_3(\kappa)\coloneqq \frac{I_0(\kappa)\int_0^\kappa s\Lb_1(s)\, ds - \Lb_0(\kappa)\int_0^\kappa sI_1(s)\, ds}{\kappa^4I_0(\kappa)}. 
\end{equation*} 
The integral expression for $Y_3(\kappa)$ follows from noticing the last two terms in $\lambda_{0, 1}^*$ are obtained as the definite integral of the last term in $h_0^*$; see the proof of Theorem \ref{thm:Iso3D_0}. From \cite[Corollary 2.6]{gaunt:2014} with $\nu = 1$ and \eqref{eq:Iso3D_0_d0}, we find 
\begin{equation*}
\frac{2}{\kappa^2} < \frac{\pi\Upsilon(\kappa)}{\kappa^2I_2(\kappa)} < \frac{8}{3\kappa^2} \ \ \tr{ for all $\kappa > 0$} \implies \lim_{\kappa\to\infty} \frac{\pi\Upsilon(\kappa)}{\kappa^2I_2(\kappa)} = 0, 
\end{equation*} 
and together with \eqref{eq:asymp}, we obtain 
\begin{align*} 
\lim_{\kappa\to\infty} d_0 = \lim_{\kappa\to\infty} \left[\frac{2I_0(\kappa)}{3I_2(\kappa)} - \frac{2I_1(\kappa)}{\kappa\, I_2(\kappa)} + \frac{\pi\Upsilon(\kappa)}{\kappa^2I_2(\kappa)}\right] = \frac{2}{3}. 
\end{align*} 
The two limits above yield $Y_1(\kappa)\to 1$ and $Y_2(\kappa)\to 0$ as $\kappa\to\infty$. To compute the limit of $Y_3$, let $\Mb_\nu\coloneqq \Lb_\nu - I_\nu$ be the modified Struve function of the second kind of order $\nu$ \cite[Eq.~11.2.6]{NIST}. Integrating by parts using the derivative formulas $\Lb_0'(r) = \Lb_1(r) + 2/\pi$ and $I_0'(r) = I_1(r)$ \cite[Eqs.~10.29.3, 11.4.33]{NIST}, we obtain
\[ Y_3(\kappa) = \frac{\Mb_0(\kappa)}{\kappa^4}\frac{\int_0^\kappa I_0(s)\, ds}{I_0(\kappa)} - \frac{1}{\kappa^4}\int_0^\kappa \Mb_0(s)\, ds - \frac{1}{\pi\kappa^2}. \] 
Combining \eqref{eq:asymp} and a standard asymptotic analysis using integration by parts yield \[ \int_0^\kappa I_0(s)\, ds\sim \frac{e^\kappa}{\sqrt{2\pi\kappa}} \ \ \tr{ as $\kappa\to\infty$}. \] 
This together with \eqref{eq:asymp} and the limiting forms \cite[Eqs.~11.6.1 (with $\nu = 0$), 11.6.4]{NIST} show $Y_3(\kappa)\to 0$ as $\kappa\to\infty$. The first statement of the corollary now follows. 

It remains to establish the zero surface tension limit of $h_0^*(\bond; r)$. Comparing \eqref{eq:Iso3D_0_shape} with the desired expression, we need only show the last two terms in \eqref{eq:Iso3D_0_shape} vanish in the limit of $\bond\to\infty$ for any $r\in [0, 1]$. This is evident for $r = 0$ and $r = 1$. For $r\in (0, 1)$, this follows from \eqref{eq:asymp} for the second term and from Mathematica for the third term. 
\end{proof}


\section{Discussion} \label{sec:disc} 
Assuming a flat equilibrium free surface and a pinned contact line, we considered the problem of maximizing the fundamental sloshing frequency over two classes of shallow containers: canals with a given free surface width and cross-sectional area, and radially symmetric containers with a given rim radius and volume. In addition to including the effects of surface tension, for canals, we extended the problem of two-dimensional sloshing in the vertical plane to traveling sinusoidal waves along the canal, which introduced the wavenumber $\alpha\ge 0$ as an additional parameter. 

In subsections \ref{sec:canal_variational} and \ref{sec:RS_variational}, we established a new variational characterization of fluid sloshing with surface tension for a pinned contact line. Combining this result with the shallow water theory, we approximated the fundamental sloshing frequency for shallow containers as the infimum of a one-dimensional constrained variational problem; see \eqref{eq:Slosh2D_VCs} and \eqref{eq:Slosh3D_VCs}. We defined the pinned-edge linear shallow sloshing problem as the corresponding Euler-Lagrange equations; see \eqref{eq:SloshCanalS} and \eqref{eq:Slosh3Ds}. Based on a simple observation of the specific form of the energy functional, we derived a sufficient condition and outlined a strategy for solving the isoperimetric sloshing problem; see Theorems \ref{thm:Iso2D_opt} and \ref{thm:Iso3D_opt}. 

In the absence of surface tension, to our surprise, we found that the optimal shallow canal for every $\alpha > 0$ is rectangular; see Theorem \ref{thm:Iso2D_zero}. In the presence of surface tension, we found explicit solutions for both the optimal shallow containers and the corresponding maximal sloshing frequency; see Theorems \ref{thm:Iso2D_0}-\ref{thm:Iso3D_0}. Interestingly, the optimal shallow canal for any $\alpha \ge 0$ is symmetric, but it is convex only for the case $\alpha = 0$. On the other hand, the optimal shallow radially symmetric container is not convex in both cases $m = 1$ and $m = 0$. For each of these optimal shallow containers, we found that the corresponding squared maximal sloshing frequency is a decreasing function of $\bond$ and including the effects of surface tension gives a significantly larger squared maximal sloshing frequency. Finally, because all our results are explicit, we proved that the limit of the solution (both the maximizing cross-section and its squared maximal sloshing frequency) to the isoperimetric sloshing problem with surface tension, as surface tension vanishes, \ie $\bond\to\infty$, is the solution to the isoperimetric sloshing problem without surface tension. 

We made three crucial assumptions that allowed us to solve the isoperimetric sloshing problem explicitly: (1) the equilibrium free surface is flat, (2) the contact line is pinned, and (3) the container is shallow. It would be interesting to extend our results by removing one or more of these assumptions, such as considering a curved equilibrium free surface (meniscus), other dynamic contact line boundary conditions such as Hocking's wetting boundary condition \cite{hocking:1987aa,hocking:1987bb}, or more general three-dimensional containers. 

Two additional avenues for future work are finding lower bounds for natural sloshing frequencies and investigating other geometrical constraints for the container shape. Troesch considered the isoperimetric sloshing problem of finding the container shape that minimizes the first and second sloshing frequencies among shallow convex containers \cite{troesch:1967isotrap}. Troesch proved that these optimal containers are trapezoidal containers \cite{troesch:1967integral} and obtained the following result: For planar sloshing in symmetric canals, the optimal container is rectangular for $\lambda_{0, 1}$ and triangular for $\lambda_{0, 2}$. For radially symmetric containers, the optimal container is cylindrical for $\lambda_{1, 1}$ and conical for $\lambda_{0, 1}$. Kuzanek studied a similar isoperimetric sloshing problem of maximizing the natural sloshing frequencies $\lambda_n$ on symmetric shallow canals, where he replaced the area constraint with an arc length constraint \cite{kuzanek:1974iso,kuzanek:1974existence}. Kuzanek established the existence of a unique optimal container for each $\lambda_n$, proved that they are convex, obtained the optimal containers numerically, and conjectured that they do not have vertical side walls. We would be interested in determining if these results continue to hold in the presence of surface tension. 

In the absence of surface tension, several results about the location of high spots, \ie the maximal elevation of the free surface height $\xi$, were obtained in \cite{kulczycki:2009,kulczycki:2011,kulczycki:2012} for the fundamental sloshing mode. For a planar domain whose wetted boundary $\B$ is the graph of a negative $C^2$ function on $\F$ and $\B$ form nonzero angles with $\F$ at their common endpoints, the high spot is located on $\del\F$. A similar result holds for finite canals whose vertical cross-sections satisfy the same condition. For the ice-fishing problem, \ie sloshing in the lower half-plane with $\F = \{(x,0)\colon |x| < a\}$ and $\F = \{(x, y, 0)\colon x^2 + y^2 < a^2\}$ for the two- and three-dimensional case respectively and $a > 0$, the high spot is located in the interior of $\F$. For a radially symmetric, convex, bounded container $\D\subset\F\times (-\infty, 0)$, the high spot is located on $\del\F$. All these results rely on the property that the free surface height is proportional to the trace of the fundamental sloshing mode $\Phi_1$ on $\bar\F$ if the fluid oscillates freely with the fundamental sloshing frequency, which is no longer true in the presence of surface tension due to the curvature term $\Delta_\F\xi$ in \eqref{eq:SloshST4}. In recent joint work with Nathan Willis, we used computational methods to study high spots for the ice-fishing problem with surface tension \cite{willis:2022}. It would be interesting to investigate the high spot problem with surface tension on bounded containers. 

\subsubsection*{Acknowledgements.} We would like to thank Emma Coates, Emily Dryden, Calvin Khor, Robert Viator, and Nathan Willis for stimulating discussions. We are also grateful to the referees for their valuable comments and suggestions which greatly improved the manuscript. 

\printbibliography 

\end{document}